\newtheorem{theorem}{Theorem}[section]
\newtheorem{proposition}[theorem]{Proposition}
\newtheorem{lemma}[theorem]{Lemma}
\newtheorem{corollary}[theorem]{Corollary}
\newtheorem{conjecture}[theorem]{Conjecture}
\theoremstyle{definition}
\newtheorem{definition}[theorem]{Definition}
\newtheorem{remark}[theorem]{Remark}
\def\reff#1{(\protect\ref{#1})}
\newcommand*{\Scale}[2][4]{\scalebox{#1}{$#2$}}
\title{Cyclic sieving phenomena \\
via combinatorics of continued fractions}
\author{Bishal Deb${}^{1}$\\[5mm]
${}^1$Yau Mathematical Sciences Center, Tsinghua University, Beijing 100084, China
}
\date{August 17, 2025}
\begin{document}

\maketitle

\begin{abstract}
We will exhibit several instances of the cyclic sieving phenomenon involving statistics and involutions on the following combinatorial families of objects: 
permutations, set partitions, perfect matchings, D-permutations (and its subclasses).
Our results will be based on continued fraction identities enumerating these objects.
Our instances of cyclic sieving phenomenon for permutations involve the Corteel involution;
this was first studied by Adams, Elder, Lafreni\`ere, McNicholas, Striker and Welch (arxiv~2024).
We will reprove several of their results using our setting of continued fractions;
we will also prove two of their conjectures.
Our study of set partitions and perfect matchings will involve the
Kasraoui--Zeng involution 
and the Chen--Deng--Du--Stanley--Yan (CDDSY) involution.
Finally, for D-permutations, we will construct a new involution 
which we call the Genocchi--Corteel involution.
The common feature of all of these involutions, other than the CDDSY involution,
is that they are constructed via bijections to weighted lattice paths, 
and that they exchange crossings and nestings on their respective objects.

\end{abstract}

\medskip
\noindent
{\bf Key Words:}
Cyclic sieving phenomenon,
continued fractions,
permutation,
set partition,
perfect matching,
Genocchi numbers,
median Genocchi numbers,
D-permutation,
D-semiderangement,
D-derangement,
FindStat,
crossing,
nesting

\medskip
\noindent
{\bf Mathematics Subject Classification (MSC 2020) codes:}
05A19 (Primary);\\
05A05,
05A10,
05A15,
05A18,
05E18,
30B70

\vspace*{1cm}

\newcommand{\wexx}{{\rm wexx}}

\newcommand{\cdes}{{\rm cdes}}

\newcommand{\eqdef}{\stackrel{\rm def}{=}}

\newcommand{\N}{\mathbb{N}}
\newcommand{\Z}{\mathbb{Z}}
\newcommand{\R}{\mathbb{R}}
\newcommand{\Q}{\mathbb{Q}}

\newcommand{\wt}{{\rm wt}}

\newcommand{\proofof}[1]{\bigskip\noindent{\sc Proof of #1.\ }}

\newcommand{\myendremark}{ $\blacksquare$ \bigskip}

\newcommand{\sfa}{{{\sf a}}}
\newcommand{\sfb}{{{\sf b}}}
\newcommand{\sfc}{{{\sf c}}}
\newcommand{\sfd}{{{\sf d}}}
\newcommand{\sfe}{{{\sf e}}}
\newcommand{\sff}{{{\sf f}}}
\newcommand{\sfg}{{{\sf g}}}
\newcommand{\sfh}{{{\sf h}}}
\newcommand{\sfi}{{{\sf i}}}
\newcommand{\bsfa}{{\mbox{\textsf{\textbf{a}}}}}
\newcommand{\bsfb}{{\mbox{\textsf{\textbf{b}}}}}
\newcommand{\bsfc}{{\mbox{\textsf{\textbf{c}}}}}
\newcommand{\bsfd}{{\mbox{\textsf{\textbf{d}}}}}
\newcommand{\bsfe}{{\mbox{\textsf{\textbf{e}}}}}
\newcommand{\bsff}{{\mbox{\textsf{\textbf{f}}}}}
\newcommand{\bsfg}{{\mbox{\textsf{\textbf{g}}}}}
\newcommand{\bsfh}{{\mbox{\textsf{\textbf{h}}}}}
\newcommand{\bsfi}{{\mbox{\textsf{\textbf{i}}}}}

\newcommand{\scra}{{\mathcal{A}}}
\newcommand{\scrb}{{\mathcal{B}}}
\newcommand{\scrc}{{\mathcal{C}}}
\newcommand{\bfscra}{{\bm{\mathcal{A}}}}
\newcommand{\bfscrb}{{\bm{\mathcal{B}}}}
\newcommand{\bfscrc}{{\bm{\mathcal{C}}}}
\newcommand{\bfscrap}{{\bm{\mathcal{A}'}}}
\newcommand{\bfscrbp}{{\bm{\mathcal{B}'}}}
\newcommand{\bfscrcp}{{\bm{\mathcal{C}'}}}
\newcommand{\bfscrapp}{{\bm{\mathcal{A}''}}}
\newcommand{\bfscrbpp}{{\bm{\mathcal{B}''}}}
\newcommand{\bfscrcpp}{{\bm{\mathcal{C}''}}}
\newcommand{\scrd}{{\mathcal{D}}}
\newcommand{\scre}{{\mathcal{E}}}
\newcommand{\scrf}{{\mathcal{F}}}
\newcommand{\scrg}{{\mathcal{G}}}
\newcommand{\scrgg}{{\mathscr{g}}}  
\newcommand{\scrh}{{\mathcal{H}}}
\newcommand{\scri}{{\mathcal{I}}}
\newcommand{\scrj}{{\mathcal{J}}}
\newcommand{\scrk}{{\mathcal{K}}}
\newcommand{\scrl}{{\mathcal{L}}}
\newcommand{\scrm}{{\mathcal{M}}}
\newcommand{\scrn}{{\mathcal{N}}}
\newcommand{\scro}{{\mathcal{O}}}
\newcommand\scroo{
  \mathchoice
    {{\scriptstyle\mathcal{O}}}
    {{\scriptstyle\mathcal{O}}}
    {{\scriptscriptstyle\mathcal{O}}}
    {\scalebox{0.6}{$\scriptscriptstyle\mathcal{O}$}}
  }
\newcommand{\scrp}{{\mathcal{P}}}
\newcommand{\scrq}{{\mathcal{Q}}}
\newcommand{\scrr}{{\mathcal{R}}}
\newcommand{\scrs}{{\mathcal{S}}}
\newcommand{\scrss}{{\mathscr{s}}}  
\newcommand{\scrt}{{\mathcal{T}}}
\newcommand{\scrv}{{\mathcal{V}}}
\newcommand{\scrw}{{\mathcal{W}}}
\newcommand{\scrz}{{\mathcal{Z}}}
\newcommand{\SP}{{\mathcal{SP}}}
\newcommand{\ST}{{\mathcal{ST}}}

\newcommand{\bzero}{{\bm{0}}}

\newcommand{\rbf}{v}

\newcommand{\dperm}{{\mathfrak{D}}}
\newcommand{\dcycle}{{\mathfrak{DC}}}
\newcommand{\GCdperm}{{}^{{\rm GC}}{\mathfrak{D}}}
\newcommand{\GCP}{{}^{{\rm GC}}P}

\newcommand{\restrict}{\upharpoonright}
\newcommand{\sinv}{\sigma^{-1}}
\def\pitilde{{\widetilde{\pi}}}
\def\omegahat{{\widehat{\omega}}}
\def\xihat{{\widehat{\xi}}}

\newcommand{\textbfit}[1]{\textbf{\textit{#1}}}

\newcommand{\be}{\begin{equation}}
\newcommand{\ee}{\end{equation}}

\newcommand{\fS}{\mathfrak{S}}
\newcommand{\Sym}{\mathfrak{S}}

\newcommand{\lev}{{\rm lev}}
\newcommand{\stat}{{\rm stat}}
\newcommand{\cyc}{{\rm cyc}}
\newcommand{\mysteryone}{{\rm mys1}}
\newcommand{\mysterytwo}{{\rm mys2}}
\newcommand{\Asc}{{\rm Asc}}
\newcommand{\asc}{{\rm asc}}
\newcommand{\Des}{{\rm Des}}
\newcommand{\des}{{\rm des}}
\newcommand{\Exc}{{\rm Exc}}

\newcommand{\EArec}{{\rm EArec}}
\newcommand{\earec}{{\rm earec}}
\newcommand{\recarec}{{\rm recarec}}
\newcommand{\erec}{{\rm erec}}
\newcommand{\nonrec}{{\rm nonrec}}
\newcommand{\nrar}{{\rm nrar}}
\newcommand{\ereccval}{{\rm ereccval}}
\newcommand{\ereccdrise}{{\rm ereccdrise}}
\newcommand{\eareccpeak}{{\rm eareccpeak}}
\newcommand{\eareccdfall}{{\rm eareccdfall}}
\newcommand{\eareccval}{{\rm eareccval}}
\newcommand{\ereccpeak}{{\rm ereccpeak}}
\newcommand{\rar}{{\rm rar}}
\newcommand{\evenrar}{{\rm evenrar}}
\newcommand{\oddrar}{{\rm oddrar}}
\newcommand{\nrcpeak}{{\rm nrcpeak}}
\newcommand{\nrcval}{{\rm nrcval}}
\newcommand{\nrcdrise}{{\rm nrcdrise}}
\newcommand{\nrcdfall}{{\rm nrcdfall}}
\newcommand{\nrfix}{{\rm nrfix}}
\newcommand{\Evenfix}{{\rm Evenfix}}
\newcommand{\evenfix}{{\rm evenfix}}
\newcommand{\Oddfix}{{\rm Oddfix}}
\newcommand{\oddfix}{{\rm oddfix}}
\newcommand{\evennrfix}{{\rm evennrfix}}
\newcommand{\oddnrfix}{{\rm oddnrfix}}
\newcommand{\Cpeak}{{\rm Cpeak}}
\newcommand{\cpeak}{{\rm cpeak}}
\newcommand{\Cval}{{\rm Cval}}
\newcommand{\cval}{{\rm cval}}
\newcommand{\Cdasc}{{\rm Cdasc}}
\newcommand{\cdasc}{{\rm cdasc}}
\newcommand{\Cddes}{{\rm Cddes}}
\newcommand{\cddes}{{\rm cddes}}
\newcommand{\Cdrise}{{\rm Cdrise}}
\newcommand{\cdrise}{{\rm cdrise}}
\newcommand{\Cdfall}{{\rm Cdfall}}
\newcommand{\cdfall}{{\rm cdfall}}

\newcommand{\maxpeak}{{\rm maxpeak}}
\newcommand{\nmaxpeak}{{\rm nmaxpeak}}
\newcommand{\minval}{{\rm minval}}
\newcommand{\nminval}{{\rm nminval}}

\newcommand{\exc}{{\rm exc}}
\newcommand{\excee}{{\rm excee}}
\newcommand{\exceo}{{\rm exceo}}
\newcommand{\excoe}{{\rm excoe}}
\newcommand{\excoo}{{\rm excoo}}
\newcommand{\erecexcoe}{{\rm erecexcoe}}
\newcommand{\erecexcoo}{{\rm erecexcoo}}
\newcommand{\nrexcoe}{{\rm nrexcoe}}
\newcommand{\nrexcoo}{{\rm nrexcoo}}
\newcommand{\aexc}{{\rm aexc}}
\newcommand{\aexcee}{{\rm aexcee}}
\newcommand{\aexceo}{{\rm aexceo}}
\newcommand{\aexcoe}{{\rm aexcoe}}
\newcommand{\aexcoo}{{\rm aexcoo}}
\newcommand{\earecaexcee}{{\rm earecaexcee}}
\newcommand{\earecaexceo}{{\rm earecaexceo}}
\newcommand{\nraexcee}{{\rm nraexcee}}
\newcommand{\nraexceo}{{\rm nraexceo}}
\newcommand{\Fix}{{\rm Fix}}
\newcommand{\fix}{{\rm fix}}
\newcommand{\fixe}{{\rm fixe}}
\newcommand{\fixo}{{\rm fixo}}
\newcommand{\rare}{{\rm rare}}
\newcommand{\raro}{{\rm raro}}
\newcommand{\nrfixe}{{\rm nrfixe}}
\newcommand{\nrfixo}{{\rm nrfixo}}
\newcommand{\xee}{x_{\rm ee}}
\newcommand{\xeo}{x_{\rm eo}}
\newcommand{\uee}{u_{\rm ee}}
\newcommand{\ueo}{u_{\rm eo}}
\newcommand{\yoo}{y_{\rm oo}}
\newcommand{\yoe}{y_{\rm oe}}
\newcommand{\voo}{v_{\rm oo}}
\newcommand{\voe}{v_{\rm oe}}
\newcommand{\zo}{z_{\rm o}}
\newcommand{\ze}{z_{\rm e}}
\newcommand{\wo}{w_{\rm o}}
\newcommand{\we}{w_{\rm e}}
\newcommand{\so}{s_{\rm o}}
\newcommand{\se}{s_{\rm e}}

\newcommand{\Wex}{{\rm Wex}}
\newcommand{\wex}{{\rm wex}}
\newcommand{\lrmax}{{\rm lrmax}}
\newcommand{\rlmax}{{\rm rlmax}}
\newcommand{\Rec}{{\rm Rec}}
\newcommand{\rec}{{\rm rec}}
\newcommand{\Arec}{{\rm Arec}}
\newcommand{\arec}{{\rm arec}}
\newcommand{\ERec}{{\rm ERec}}
\newcommand{\Val}{{\rm Val}}
\newcommand{\val}{{\rm val}}
\newcommand{\dasc}{{\rm dasc}}
\newcommand{\ddes}{{\rm ddes}}
\newcommand{\inv}{{\rm inv}}
\newcommand{\maj}{{\rm maj}}
\newcommand{\rs}{{\rm rs}}
\newcommand{\cross}{{\rm cr}}
\newcommand{\crosshat}{{\widehat{\rm cr}}}
\newcommand{\nest}{{\rm ne}}
\newcommand{\ucross}{{\rm ucross}}
\newcommand{\ucrosscval}{{\rm ucrosscval}}
\newcommand{\ucrosscpeak}{{\rm ucrosscpeak}}
\newcommand{\ucrosscdrise}{{\rm ucrosscdrise}}
\newcommand{\lcross}{{\rm lcross}}
\newcommand{\lcrosscpeak}{{\rm lcrosscpeak}}
\newcommand{\lcrosscval}{{\rm lcrosscval}}
\newcommand{\lcrosscdfall}{{\rm lcrosscdfall}}
\newcommand{\unest}{{\rm unest}}
\newcommand{\unestcval}{{\rm unestcval}}
\newcommand{\unestcpeak}{{\rm unestcpeak}}
\newcommand{\unestcdrise}{{\rm unestcdrise}}
\newcommand{\lnest}{{\rm lnest}}
\newcommand{\lnestcpeak}{{\rm lnestcpeak}}
\newcommand{\lnestcval}{{\rm lnestcval}}
\newcommand{\lnestcdfall}{{\rm lnestcdfall}}
\newcommand{\ujoin}{{\rm ujoin}}
\newcommand{\ljoin}{{\rm ljoin}}
\newcommand{\psnest}{{\rm psnest}}
\newcommand{\upsnest}{{\rm upsnest}}
\newcommand{\lpsnest}{{\rm lpsnest}}
\newcommand{\epsnest}{{\rm epsnest}}
\newcommand{\opsnest}{{\rm opsnest}}
\newcommand{\rodd}{{\rm rodd}}
\newcommand{\reven}{{\rm reven}}
\newcommand{\lodd}{{\rm lodd}}
\newcommand{\leven}{{\rm leven}}
\newcommand{\sg}{{\rm sg}}
\newcommand{\bl}{{\rm bl}}
\newcommand{\tran}{{\rm tr}}
\newcommand{\area}{{\rm area}}
\newcommand{\ret}{{\rm ret}}
\newcommand{\peaks}{{\rm peaks}}
\newcommand{\hl}{{\rm hl}}
\newcommand{\sll}{{\rm sl}}
\newcommand{\negg}{{\rm neg}}
\newcommand{\imp}{{\rm imp}}
\newcommand{\osg}{{\rm osg}}
\newcommand{\ons}{{\rm ons}}
\newcommand{\isg}{{\rm isg}}
\newcommand{\ins}{{\rm ins}}
\newcommand{\LL}{{\rm LL}}
\newcommand{\height}{{\rm ht}}
\newcommand{\as}{{\rm as}}

\newcommand{\ahat}{{\widehat{a}}}
\newcommand{\vhat}{{\widehat{v}}}
\newcommand{\yhat}{{\widehat{y}}}
\newcommand{\phat}{{\widehat{p}}}
\newcommand{\qhat}{{\widehat{q}}}
\newcommand{\Zhat}{{\widehat{Z}}}
\newcommand{\myhat}{{\widehat{\;}}}
\newcommand{\vtilde}{{\widetilde{v}}}
\newcommand{\ytilde}{{\widetilde{y}}}

\newcommand{\Stat}[1]{{\rm Stat}#1}

\newcommand{\erecop}{{\rm erecop}}
\newcommand{\erecin}{{\rm erecin}}
\newcommand{\nerecop}{{\rm nerecop}}
\newcommand{\nerecin}{{\rm nerecin}}
\newcommand{\crrtilde}{{\widetilde{{\rm cr}}}}
\newcommand{\crop}{{\rm crop}}
\newcommand{\crin}{{\rm crin}}
\newcommand{\nee}{{\rm ne}}
\newcommand{\neetilde}{{\widetilde{{\rm ne}}}}
\newcommand{\neop}{{\rm neop}}
\newcommand{\nein}{{\rm nein}}
\newcommand{\psne}{{\rm psne}}
\newcommand{\psnetilde}{{\widetilde{{\rm psne}}}}
\newcommand{\crne}{{\rm crne}}
\newcommand{\qne}{{\rm qne}}
\newcommand{\qnetilde}{{\widetilde{{\rm qne}}}}
\newcommand{\itilde}{{\widetilde{i}}}
\newcommand{\ktilde}{{\widetilde{k}}}
\newcommand{\ov}{{\rm ov}}
\newcommand{\ovin}{{\rm ovin}}
\newcommand{\ovinrev}{{\rm ovinrev}}
\newcommand{\ovtilde}{{\widetilde{{\rm ov}}}}
\newcommand{\ovbar}{{\overline{{\rm ov}}}}
\newcommand{\cov}{{\rm cov}}
\newcommand{\covin}{{\rm covin}}
\newcommand{\covinrev}{{\rm covinrev}}
\newcommand{\covtilde}{{\widetilde{{\rm cov}}}}
\newcommand{\covbar}{{\overline{{\rm cov}}}}
\newcommand{\qcov}{{\rm qcov}}
\newcommand{\qcovtilde}{{\widetilde{{\rm qcov}}}}
\newcommand{\pscov}{{\rm pscov}}
\newcommand{\brec}{{\rm brec}}
\newcommand{\brecop}{{\rm brecop}}
\newcommand{\brecin}{{\rm brecin}}
\newcommand{\nbrecop}{{\rm nbrecop}}
\newcommand{\nbrecin}{{\rm nbrecin}}

\newcommand{\op}{{\rm op}}
\newcommand{\inside}{{\rm in}}

\newcommand{\cros}{{\rm cr}}
\newcommand{\nes}{{\rm ne}}
\newcommand{\ccc}{{\rm cc}}
\newcommand{\ecp}{{\rm ecp}}
\newcommand{\ecpar}{{\rm ecpar}}
\newcommand{\ecpnar}{{\rm ecpnar}}
\newcommand{\ocp}{{\rm ocp}}
\newcommand{\ocpar}{{\rm ocpar}}
\newcommand{\ocpnar}{{\rm ocpnar}}
\newcommand{\ecvr}{{\rm ecvr}}
\newcommand{\ecvnr}{{\rm ecvnr}}
\newcommand{\ocvr}{{\rm ocvr}}
\newcommand{\ocvnr}{{\rm ocvnr}}
\newcommand{\ecr}{{\rm ecr}}
\newcommand{\ocr}{{\rm ocr}}
\newcommand{\ene}{{\rm ene}}
\newcommand{\oone}{{\rm one}}

\newcommand{\roof}{{\rm roof}}

\newcommand{\laguerre}[1]{\left. L\right|_{#1}}
\newcommand{\laguerrep}[1]{\left. L'\right|_{#1}}

\newcommand{\FZbij}{\varphi^{\rm FZ}}
\newcommand{\DSbij}{\varphi^{\rm DS}}
\newcommand{\SZbij}{\varphi^{\rm KZ}}
\newcommand{\SZbijperfect}{\varphi^{\rm KZ}}
\newcommand{\CDDSY}{\varphi^{\rm CDDSY}}

\newcommand{\VT}{{\mathcal{VT}}}
\newcommand{\OT}{{\mathcal{OT}}}

\newcommand{\motzkinperm}{\mathfrak{S}\mathcal{M}}
\newcommand{\schroderset}{\mathfrak{D}\mathcal{S}}
\newcommand{\motzkinset}{\Pi\mathcal{M}}
\newcommand{\dyckperfect}{\mathcal{ID}}

\newcommand{\xicomplement}{\xi^{\rm c}}

\newcommand{\GCsigma}{{\phi^{{\rm GC}}_{2n}(\sigma)}}

\newcommand{\pmp}{{\rm pmp}}

\newcommand{\circlesign}[1]{
\begin{tikzpicture}[baseline=(X.base), inner sep=1]
    \node[draw,circle] (X)  {\ensuremath{#1}};
\end{tikzpicture}
}

\newcommand{\circlesignsubscript}[1]{
\begin{tikzpicture}[baseline=(X.base), inner sep=0.5]
    \node[draw,circle] (X)  {\ensuremath{#1}};
\end{tikzpicture}
}

\clearpage

\tableofcontents

\clearpage

\section{Introduction}

The purpose of this paper is to exhibit several instances of the cyclic sieving phenomena 
(see Section~\ref{sec.intro.csp} for definitions)
using the combinatorics of continued fractions.
We will specifically study the following combinatorial families of objects:
permutations, set partitions, perfect matchings, D-permutations.

In a recent work, Adams, Elder, Lafreni\`ere, McNicholas, Striker and Welch
\cite{Adams_24},
carried out a systematic search for instances of the cyclic sieving phenomenon on permutations.
They searched the entire FindStat database \cite{findstat} for maps and statistics on permutations.
Their study involved three classes of involutions on permutations 
which they showed to be instances of the $q=-1$ phenomenon of Stembridge \cite{Stembridge_94}.
Along with several other contributions,
Adams et al initiated the study of the \textbf{{\em Corteel involution}} \cite[Map~239]{findstat},~\cite{Corteel_07}
in the context of the cyclic sieving phenomenon.
This involution uses the Foata--Zeilberger bijection \cite{Foata_90};
it exchanges the number of crossings and nestings in permutations 
and is constructed by taking ``complement'' of associated labelled Motzkin paths.
In this present paper, we continue this study that Adams et al initiated for the Corteel involution.
This paper attempts to convince the reader that
a natural setting to obtain instances of the cyclic sieving phenomenon with respect to the Corteel involution
is via 
the combinatorial theory of continued fractions for multivariate polynomials
counting permutation statistics.
We will study this in great detail in Section~\ref{sec.cf.permutations}.

Our first main result in this context is Proposition~\ref{prop.perm.csp}
where we provide a very general recipe to construct instances of the cyclic sieving phenomenon
via three multivariate polynomials~\eqref{eq.def.Qntilde},~\eqref{def.Qn},~\eqref{def.Qn.pq}.
Using this general recipe, we will reprove several, but not all, of the 
results from \cite[Section~4]{Adams_24}.
Our second contribution in this context is to prove \cite[Conjecture~4.24]{Adams_24};
this conjecture asks for the equidistribution of two different permutation statistics:

%
%
%
%
%
%
\begin{conjecture}[{\cite[Conjecture~4.24]{Adams_24}}]
The number of weak excedances that are also mid-points of a decreasing subsequence of
length~3 (Statistic~373 in FindStat)
is equidistributed with the cycle descent number (Statistic~317 in FindStat) and thus exhibits the
cyclic sieving phenomenon under involutions with $1$ point for $n=0$ and $2^{n-1}$ points for all $n\geq 1$.
In particular, this statistic exhibits the cyclic sieving phenomenon with respect to the Corteel involution.
\label{conj.equidist}
\end{conjecture}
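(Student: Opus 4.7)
The plan is to prove the equidistribution, and from it the cyclic sieving statement, by pulling both statistics back through the Foata--Zeilberger bijection $\FZbij$ to the same statistic on labelled Motzkin paths, and then invoking the general machinery of Proposition~\ref{prop.perm.csp}. Recall that $\FZbij$ sends each $\sigma \in \fS_n$ to a labelled Motzkin path in which weak excedance positions $i$ (those with $\sigma(i) \geq i$) correspond to rise steps and upper level steps, and that the crossing and nesting statistics translate into local data on these labels.

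I would first reformulate the statistic $s(\sigma) \eqdef \#\{i : \sigma(i) \geq i \text{ and } \exists\, j < i < k \text{ with } \sigma(j) > \sigma(i) > \sigma(k)\}$ of Statistic~373. A weak excedance $i$ fails the mid-point condition precisely when $i$ is a left-to-right maximum of $\sigma$ or a right-to-left minimum; so $s(\sigma)$ counts weak excedances that are neither. Under $\FZbij$, left-to-right maxima at weak excedance positions correspond exactly to those rise/upper-level steps whose arc is not crossed from the left at any earlier height, and an analogous description holds for right-to-left minima. I would show, step by step, that $s(\sigma)$ equals the number of rise and upper-level steps whose label is strictly positive and whose arc participates in at least one crossing or nesting on each side, i.e., a clean crossing/nesting-type count in the FZ setting. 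In parallel, I would translate $\cdes$ (Statistic~317) into the same Motzkin path framework by decomposing each cycle of $\sigma$ into the maximal arcs induced by $\FZbij$; cdes is defined purely cycle-theoretically, so this is the step that requires the most care.

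Once both statistics are expressed as the same functional on labelled Motzkin paths, equidistribution follows; moreover, because the functional depends only on the local label data at each step, the joint polynomial fits the template~\eqref{eq.def.Qntilde}--\eqref{def.Qn.pq} used by Proposition~\ref{prop.perm.csp}. Applying that proposition directly yields the cyclic sieving phenomenon with the prescribed orbit sizes ($1$ for $n=0$, $2^{n-1}$ for $n \geq 1$) and identifies the involution as the Corteel involution, since the Corteel map is exactly the complementation on labelled Motzkin paths that underlies the recipe.

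The main obstacle I anticipate is bridging the cycle-theoretic definition of $\cdes$ with the linear, one-line data encoded by $\FZbij$: reconciling cycle structure with arc crossings without introducing spurious correction terms. In particular, showing that the \emph{specific} mid-point restriction for $s(\sigma)$ (both an inversion to the left and an inversion to the right of~$i$) is the exact combinatorial mirror of cdes on the Motzkin path side is where the actual content lies; once that bijective identification is set up, the continued fraction yielding $s$ and $\cdes$ as equidistributed statistics, and the cyclic sieving conclusion under the Corteel involution, drops out of Proposition~\ref{prop.perm.csp}.
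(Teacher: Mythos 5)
Your reformulation of Statistic~373 is on the right track: a weak excedance is a midpoint of a decreasing subsequence of length~3 exactly when it is neither a record nor an antirecord, which is the content of the paper's Lemma~\ref{lem.wexx.cdes.relations}(a), $\wexx = \nrfix + \nrcval + \nrcdrise$, and this side of the argument is indeed handled by specializing the Sokal--Zeng polynomial $Q_n^{(1)}$. The gap is on the $\cdes$ side, and it is structural rather than a matter of missing details. Your plan is to pull \emph{both} statistics back through the \emph{same} Foata--Zeilberger bijection and express them as the \emph{same} functional on labelled Motzkin paths; but that would force $\wexx(\sigma) = \cdes(\sigma)$ pointwise, which is false: for $\sigma = 321 \in \fS_3$ one has $\wexx(\sigma) = 1$ (position $2$ is a weak excedance and the midpoint of $3>2>1$) while $\cdes(\sigma) = 0$ (the only anti-excedance is $i=3$, and $\sigma(3)=1$ is the minimum of its cycle). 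The two statistics are only equidistributed, so no single bijection-to-paths argument of the kind you describe can work; one needs two different routes to the \emph{same continued fraction}, which is exactly what the paper does.

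The step you flag as ``requiring the most care'' --- encoding $\cdes$ in the Motzkin-path framework --- is precisely the part your outline does not solve, and it cannot be solved by reading local label data off the FZ path. The paper rewrites $\cdes = \nminval + \cdfall$ (Lemma~\ref{lem.wexx.cdes.relations}(b)); the statistic $\nminval$ (cycle valleys that are not the minimum of their cycle) is cycle-theoretic and is \emph{not} tracked by the FZ labels or by the first Sokal--Zeng master polynomial. Handling it requires Theorem~\ref{thm.Deb}, the reformulation in terms of cycle valley minima of Deb's theorem \cite[Theorem~3.1]{Deb_23} (originally a conjecture of Sokal and Zeng), whose proof rests on the Laguerre-digraph interpretation and the fact that cycle valley minima are cycle closers --- not on the plain FZ bijection. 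With that ingredient, the substitutions $v_1=v_2=x$, $w_k=x$ ($k>0$) in $Q_n^{(1)}$ and $\vtilde_1 = x_2 = u_2 = x$ in $\widetilde Q_n$ yield the same J-fraction with $\beta_n = n(n-1+x)$, $\gamma_n = n+(n-1)x$, giving equidistribution; setting $x=-1$ then gives the CSP. Your final appeal to Proposition~\ref{prop.perm.csp} is fine for the sieving step, but the equidistribution, which is the actual content of the conjecture, is not reached by your argument as it stands.
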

\noindent We will prove this conjecture in Theorem~\ref{thm.equidist}.
We will need two ingredients for our proof, 
the first ingredient is to rewrite the two statistics in terms of simpler statistics;
we do this in Lemma~\ref{lem.wexx.cdes.relations}.
The second ingredient, which we also think is independently an important contribution,
is to reformulate a theorem of Deb \cite[Theorem~3.1]{Deb_23}
in terms of {\em cycle valley minima}.
This reformulation is in the same spirit as what was done in \cite[Section~4.3]{Deb_23}
and in \cite[Section~4.1.3]{Deb-Sokal_genocchi};
this has been done in Theorem~\ref{thm.Deb}. 
The original theorem of Deb 
was first conjectured by Sokal and Zeng \cite[Theorem~3.1]{Sokal-Zeng_22}.

Our third contribution for permutations is the proof of another conjecture of Adams et al
\cite[Conjecture~4.23]{Adams_24}. 
We do this in Section~\ref{sec.stat123}.
However, the general tools used in the rest of this paper do not work in this situation.
We instead modify a proof of Adams et al to prove this result.

The Corteel involution forms the prototypical involution in this work.
After our study involving permutations, 
we will look at other combinatorial objects 
and analogous maps.
For set partitions, 
studied in Section~\ref{sec.setpart},
we will use an involution due to Kasraoui and Zeng \cite{Kasraoui_06};
for perfect matchings,
studied in Section~\ref{sec.perfect},
we will study the restriction of this involution.
The \textbf{{\em Kasraoui--Zeng involution}} 
also exchanges crossings and nestings,
but on set partitions and perfect matchings,
and is described by taking complements of labelled Motzkin paths
(these are now the {\em Charlier diagrams} of Viennot \cite{Viennot_83}.)
For both of these objects, we will again provide a general recipe to 
construct instances of the cyclic sieving phenomenon,
see Propositions~\ref{prop.setpart.csp} and~\ref{prop.perfect.csp}.
We show that the number of fixed points of these involutions are 
the {\em odd Fibonacci numbers} $F_{2n-1}$ \cite[A001519]{OEIS} (Lemma~\ref{lem.csp.fixed.KZ})
and $1$ (Lemma~\ref{lem.csp.fixed.KZ.perfect}), respectively.

Another involution on set partitions and perfect matchings is the 
\textbf{{\em Chen--Deng--Du--Stanley--Yan (CDDSY) involution}} \cite{Chen_07}.
Chen et al construct a bijection between set partitions and
{\em vacillating tableaux},
which are certain walks on the Young's lattice.
They use this bijection to construct their involution.
We show that this involution also has the same number of fixed points as the 
Kasraoui--Zeng involution 
(Lemmas~\ref{lem.csp.fixed.CDDSY} and~\ref{lem.csp.fixed.CDDSY.perfect}).

We will then shift our attention to {\em D-permutations} (Section~\ref{sec.dperm});
these are a class of permutations of $[2n]$ and are counted by the median Genocchi numbers.
They were introduced by Lazar during his PhD \cite{Lazar_20,Lazar_22,Lazar_23}.
More recently,
Deb and Sokal constructed a bijection that resembles the Foata--Zeilberger bijection \cite[Section~6]{Deb-Sokal_genocchi}.
Hence it is natural to ask if there is an adaptation of the Corteel involution to this setting.
We will introduce such an involution in our paper (Section~\ref{sec.genocchi.corteel});
we will call this the \textbf{{\em Genocchi--Corteel involution}}.
Our construction is such that it can be restricted to the subclasses of 
{\em D-semiderangements} and also {\em D-derangements}.
We count the number of fixed points for D-permutations
and its subclasses of D-semiderangements, and D-derangements in Corollary~\ref{lem.genocchi.corteel.fixed.points};
the number of fixed points are given by 
$2^n F_{2n-1}$ \cite[A154626]{OEIS},
$3^{n-1}$ \cite[A133494]{OEIS}
and $2^{n-1}$ \cite[A011782]{OEIS}, respectively.
We will then use this involution on D-permutation and its subclasses to exhibit the cyclic sieving phenomena for various statistics;
we provide a general recipe for this in Propositions~\ref{prop.dperm.csp} and~\ref{prop.csp.dperm.subclass}.

%

We conclude this introduction with three remarks:
\begin{enumerate}

\item Prior to the work in \cite{Adams_24}, 
the method of searching the FindStat database for instances 
of dynamical phenomena was pioneered by
Elder, Lafreni\'ere, McNicholas, Striker, and Welch in
\cite{Elder_23}.
In this article, the authors searched the FindStat database for pairs of maps and statistics 
which were instances of the {\em homomesy phenomenon}.

\item 
At this moment, the FindStat database contains statistics on set partitions and perfect matchings. 
It will be interesting to perform a systematic search on these classes 
for the cyclic sieving phenomenon
similar to the search performed by Adams et al in \cite{Adams_24}.

\item An important ingredient of our results is plugging in $-1$ 
in some known continued fraction identity to yield a simple rational function.
We mention a recent work \cite{Deb-Sokal_cycle_minus_one}
where results were obtained by 
plugging in $-1$ to a statistic, 
viz. to the number of cycles.
\end{enumerate}

The rest of this introduction is structured as follows:
in Section~\ref{sec.intro.csp}, we define the cyclic sieving phenomenon.
Then in Section~\ref{sec.intro.outline}, we provide the outline of the rest of this paper.

\subsection{The cyclic sieving phenomena}
\label{sec.intro.csp}

The cyclic sieving phenomenon was introduced by Reiner, Stanton and White \cite{Reiner_04};
also see the nice survey article of Sagan \cite{Sagan_11} and also \cite{Reiner_14}.

Let $X$ be a finite set (usually a set of combinatorial objects).
Next, let $C$ be a finite cyclic group of order $n$
with an action on $X$.
Given $g\in C$, we use $X^g$ to denote the elements of $X$ invariant under the action of $g$,
i.e.
\be
X^g 
\;=\; 
\{y\in X \;\colon\; g\cdot y =y\}\;.
\ee
Let $\omega$ be a generator of the group $C$, i.e., $C= \langle \omega \rangle$.
We are now ready to define the cyclic sieving phenomenon.

\begin{definition}
Given a set $X$, a polynomial $f(q)$, and an action of $C$ on $X$,
the triple $(X, C, f(q))$ exhibits the {\em cyclic sieving phenomenon} if,
for all $d$ we have
\be
	\left|X^{\omega^d}\right| \;=\; f\left(\exp\left(2\pi i \dfrac{d}{n}\right)\right) \;.
\label{eq.def.csp}
\ee
\label{def.csp}
\end{definition}

Notice that when $d=n$, the identity~\eqref{eq.def.csp} implies that $f(1) = |X|$.
Thus, the coefficients of the polynomial $f(q)$ should sum up to the cardinality of $X$.
For our purposes, $f(q)$ will be the generating polynomial of some {\em statistic on the set $X$}.

When $n=2$, the action of the generator $\omega$ 
gives a bijection on the set $X$, $x \mapsto \omega\cdot x$.
This bijection is, in fact, an involution.
This special case, was first introduced as the ``$q=-1$ phenomenon'' by Stembridge \cite{Stembridge_94}.
Exhibiting the cyclic sieving phenomenon in this case is equivalent 
to showing that $f(1) = |X|$,
and that $f(-1) = |X^\omega|$, the number of fixed points of the involution $x\mapsto \omega\cdot x$.
In this paper, our instances of the cyclic sieving phenomenon
will all be instances of the ``$q=-1$'' phenomenon.

The main idea in this paper can be summarised as follows:
Let $(S_n)_{n\geq 0}$ be a combinatorial family of objects
and let $\Stat: S_n\to \mathbb{N}$ be some statistics on this set.
Let $F(t;q) = \sum_{n=0}^\infty \sum_{\sigma\in S_n} q^{\Stat{(\sigma})} t^n$ 
be its bivariate ordinary generating function.
Here the indeterminate $q$ will keep track of this statistic and $t$ will keep track of the set $S_n$.
Also, let $\phi_n :S_n \to S_n$ be an involution such that the number fixed points of this involution is $a_n$.
We think of this involution as the action of $\omega$ on the set $S_n$ where $\omega^2 = {\rm id}_C$.
Then exhibiting the cyclic sieving phenomenon is equivalent to showing 
$F(t;-1) = \sum_{n=0}^\infty a_n t^n$.
We will specifically look at generating functions $F(t;q)$ which can be expressed as a nice classical continued fraction;
this will simplify the evaluation of the substitution $F(t;-1)$
and our results will seem to be easy consequences.
Following \cite{Adams_24}, we will say that the statistic
$\Stat$ exhibits the cyclic sieving phenomenon to mean that
the corresponding generating polynomial
$\sum_{\sigma\in S_n} q^{\Stat{(\sigma})}$
exhibits the cyclic sieving phenomenon.

\subsection{Outline of the paper}
\label{sec.intro.outline}

The rest of the paper is structured as follows:
in Section~\ref{sec.labelled.paths} we will 
establish notation for labelled 
Dyck, Motzkin and Schr\"oder paths.
We will then present in Sections~\ref{sec.cf.permutations}--\ref{sec.dperm} 
our results for permutations, set partitions, perfect matchings and finally, D-permutations and its subclasses.

In Appendix~\ref{app.positivity.vincular},
we present some positivity conjectures on
the enumeration of certain vincular patterns in permutations
which we discovered while writing this paper.


\section{Labelled Dyck, Motzkin and Schr\"oder paths}
\label{sec.labelled.paths}

Recall that a \textbfit{Motzkin path} of length $n \ge 0$
is a path $\omega = (\omega_0,\ldots,\omega_n)$
in the right quadrant $\N \times \N$,
starting at $\omega_0 = (0,0)$ and ending at $\omega_n = (n,0)$,
whose steps $s_j = \omega_j - \omega_{j-1}$
are $(1,1)$ [``rise'' or ``up step''], $(1,-1)$ [``fall'' or ``down step'']
or $(1,0)$ [``level step''].
We write $h_j$ for the \textbfit{height} of the Motzkin path at abscissa~$j$,
i.e.\ $\omega_j = (j,h_j)$;
note in particular that $h_0 = h_n = 0$.
We write $\scrm_n$ for the set of Motzkin paths of length~$n$,
and $\scrm = \bigcup_{n=0}^\infty \scrm_n$.
A Motzkin path is called a \textbfit{Dyck path} if it has no level steps.
A Dyck path always has even length;
we write $\scrd_{2n}$ for the set of Dyck paths of length~$2n$,
and $\scrd = \bigcup_{n=0}^\infty \scrd_{2n}$.

A \textbfit{Schr\"oder path} of length $2n$ ($n \ge 0$)
is a path $\omega = (\omega_0,\ldots,\omega_{2n})$
in the right quadrant $\N \times \N$,
starting at $\omega_0 = (0,0)$ and ending at $\omega_{2n} = (2n,0)$,
whose steps are $(1,1)$ [``rise'' or ``up step''],
$(1,-1)$ [``fall'' or ``down step'']
or $(2,0)$ [``long level step''].
We write $s_j$ for the step starting at abscissa $j-1$.
If the step $s_j$ is a rise or a fall,
we set $s_j = \omega_j - \omega_{j-1}$ as before.
If the step $s_j$ is a long level step,
we set $s_j = \omega_{j+1} - \omega_{j-1}$ and leave $\omega_j$ undefined;
furthermore, in this case there is no step $s_{j+1}$.
We write $h_j$ for the height of the Schr\"oder path at abscissa~$j$
whenever this is defined, i.e.\ $\omega_j = (j,h_j)$.
Please note that $\omega_{2n} = (2n,0)$ and $h_{2n} = 0$
are always well-defined,
because there cannot be a long level step starting at abscissa $2n-1$.
Note also that a long level step at even (resp.~odd) height
can occur only at an odd-numbered (resp.~even-numbered) step.
We write $\scrs_{2n}$ for the set of Schr\"oder paths of length~$2n$,
and $\scrs = \bigcup_{n=0}^\infty \scrs_{2n}$.

There is an obvious bijection between Schr\"oder paths and Motzkin paths:
namely, every long level step is mapped onto a level step.

Let $\bfscra = (\scra_h)_{h \ge 0}$, $\bfscrb = (\scrb_h)_{h \ge 1}$
and $\bfscrc = (\scrc_h)_{h \ge 0}$ be sequences of finite sets.
An
\textbfit{$(\bfscra,\bfscrb,\bfscrc)$-labelled Motzkin path of length $\bm{n}$}
is a pair $(\omega,\xi)$
where $\omega = (\omega_0,\ldots,\omega_n)$
is a Motzkin path of length $n$,
and $\xi = (\xi_1,\ldots,\xi_n)$ is a sequence satisfying
\be
   \xi_i  \:\in\:
   \begin{cases}
       \scra(h_{i-1})  & \textrm{if step $i$ is a rise (i.e.\ $h_i = h_{i-1} + 1$)}
              \\[1mm]
       \scrb(h_{i-1})  & \textrm{if step $i$ is a fall (i.e.\ $h_i = h_{i-1} - 1$)}
              \\[1mm]
       \scrc(h_{i-1})  & \textrm{if step $i$ is a level step (i.e.\ $h_i = h_{i-1}$)}
   \end{cases}
 \label{eq.xi.ineq}
\ee
where $h_{i-1}$ (resp.~$h_i$) is the height of the Motzkin path
before (resp.~after) step $i$.
[For typographical clarity
 we have here written $\scra(h)$ as a synonym for $\scra_h$, etc.]
We call $\xi_i$ the \textbfit{label} associated to step $i$.
We call the pair $(\omega,\xi)$
an \textbfit{$(\bfscra,\bfscrb)$-labelled Dyck path}
if $\omega$ is a Dyck path (in this case $\bfscrc$ plays no role).
We denote by $\scrm_n(\bfscra,\bfscrb,\bfscrc)$
the set of $(\bfscra,\bfscrb,\bfscrc)$-labelled Motzkin paths of length $n$,
and by $\scrd_{2n}(\bfscra,\bfscrb)$
the set of $(\bfscra,\bfscrb)$-labelled Dyck paths of length $2n$.

We define a \textbfit{$(\bfscra,\bfscrb,\bfscrc)$-labelled
Schr\"oder path}
in an analogous way;
now the sets $\scrc_h$ refer to long level steps.
We denote by $\scrs_{2n}(\bfscra,\bfscrb,\bfscrc)$
the set of $(\bfscra,\bfscrb,\bfscrc)$-labelled Schr\"oder paths
of length $2n$.

Let us stress that the sets $\scra_h$, $\scrb_h$ and $\scrc_h$ are allowed
to be empty.
Whenever this happens, the path $\omega$ is forbidden to take a step
of the specified kind starting at the specified height.


\section{Permutations}
\label{sec.cf.permutations}

It is well known that the generating function 
of factorials can be represented as Stieltjes-type and Jacobi-type continued fractions as follows:
\begin{eqnarray}
\sum_{n=0}^\infty n!t^n
\;=\;
\cfrac{1}{1-\cfrac{t}{1-\cfrac{t}{1-\cfrac{2t}{1-\cfrac{2t}{1-\ldots}}}}}
\;=\;
\cfrac{1}{1- t - \cfrac{t}{1- 3 t - \cfrac{4 t}{1 - 5 t - \cfrac{9t}{1-\ldots}}}}
\;.
\end{eqnarray}
We will use various generalisations of this identity in this section.
We will replace $n!$ with various polynomials enumerating statistics on permutations.

In Section~\ref{subsec.statistics}, we introduce notation for permutation statistics to be used in the rest of this paper.
Then in Section~\ref{subsec.reformulation}, 
we reformulate a result of Deb  \cite[Theorem~3.1]{Deb_23};
this will be one of our main technical tools to prove Conjecture~\ref{conj.equidist}.
After this, we recall the Corteel involution in Section~\ref{subsec.Corteel.inv}
in terms of weighted paths.
Then in Section~\ref{subsec.szpoly.permutations},
we recall various multivariate polynomials studied by Sokal and Zeng in~\cite{Sokal-Zeng_22}.
With this background, 
in Section~\ref{subsec.csp.permutations},
we then state a recipe for producing instances of 
the cyclic sieving phenomenon on permutations involving the Corteel involution
and the Sokal--Zeng polynomials;
here we reprove some of the results of Adams et al, viz., 
\cite[Theorems~4.7,4.15]{Adams_24}
and the first statement in \cite[Corollary~4.8]{Adams_24}.
Then in Section~\ref{subsec.vincular.permutations},
we exhibit the cyclic sieving phenomenon for the count of some vincular patterns;
these statistics are not covered by the  Sokal--Zeng polynomials.
Here, we will reprove the second and third statements in 
\cite[Corollary~4.8]{Adams_24}
but we are unable to prove \cite[Theorem~4.20]{Adams_24}.

In Section~\ref{sec.Adams.conj},
we will prove Conjecture~\ref{conj.equidist} and reprove \cite[Theorem~4.10]{Adams_24}
using our continued-fractions set up.
Finally, in Section~\ref{sec.stat123},
we will prove \cite[Conjecture~4.23]{Adams_24}, respectively.
However, for this result, we rely on another technique.
In particular, we will use the notion of {\em positional marked patterns}
and an involution defined by Adams et al.

\subsection{Permutation statistics}
\label{subsec.statistics}

We will use the glossary of permutation statistics 
as stated in \cite[Section~2.5]{Deb_thesis};
this includes the record-and-cycle classification 
\cite[Section~2.5.1]{Deb_thesis},
and crossings~and~nestings
\cite[Section~2.5.2]{Deb_thesis}.

We will now describe a few additional statistics which will be used in this section.
Given a permutation $\sigma\in \mathfrak{S}_n$, 
we use $\wexx(\sigma)$
to denote \cite[Statistic~373]{findstat}:
it is {\em the number of weak excedances that are also mid-points of a decreasing subsequence of length 3}, i.e.,
\begin{eqnarray}
	\wexx(\sigma) &=& |\{j\:\colon  j\leq \sigma(j)\} \,\cap\,  \{j\:\colon \hbox{$\exists$ $i,k$ with $i<j<k$ and } \sigma(i)>\sigma(j)>\sigma(k)  \}| , 
	\nonumber\\
	\label{eq.def.wexx}
\end{eqnarray}
Also let $\cdes(\sigma)$ denote {\em the cycle descent number} given in
\cite[Statistic~317]{findstat}, i.e.,
\be
	\cdes(\sigma) \;=\; |\{i\:\colon i>\sigma(i) \hbox{ and } \sigma(i) \hbox{ is not the smallest element in its cycle} \}|\;.
	\label{eq.def.cdes}
\ee

\subsection{Reformulation of an identity of Deb using cycle valley minima}
\label{subsec.reformulation}

Few years ago, Sokal and Zeng \cite{Sokal-Zeng_22}
carried out a detailed investigation of continued fractions 
enumerating permutations, set partitions and perfect matchings with respect to a large 
(sometimes infinite)
 defined in~number of independent statistics.
They conjectured a continued fraction for permutations \cite[Conjecture~2.3]{Sokal-Zeng_22},
which was recently proved by Deb \cite[Theorem~3.1]{Deb_23}.
However, \cite[Theorem~3.1]{Deb_23}
does not explicitly include the count of the statistic {\em cycle valley minima} ($\minval$).

The main purpose of this section is to state a reformulation of this result
that includes the count of cycle valley minima;
this reformulation is in the same spirit as what was done by Deb and Sokal in~\cite[Section~4.1.3]{Deb-Sokal_genocchi} 
and by Deb in~\cite[Section~4.3]{Deb_23}.
Our result will help us to prove \cite[Theorem~4.10 and Conjecture~4.24]{Adams_24} in Section~\ref{sec.Adams.conj}.

For a permutation $\sigma\in \mathfrak{S}_n$, 
a non-singleton cycle contains precisely one maximum element,
which is necessarily a cycle peak,
and precisely one minimum element,
which is necessarily a cycle valley.
We now introduce a polynomial that is similar to
Sokal and Zeng's first master polynomial for permutations
\cite[eq.~(2.23)]{Sokal-Zeng_22},
except the classification of cycle valleys as records or non-records
is replaced by the classification of cycle valleys as
minima or non-minima:
\begin{eqnarray}
        & &
        \widetilde{Q}_n(x_1,x_2, \ytilde_1, y_2,u_1, u_2, \vtilde_1, v_2, \mathbf{w})
        \;=\;
        \nonumber\\[4mm]
         & &\qquad\qquad 
         \sum_{\sigma \in \mathfrak{S}_n}
        x_1^{\eareccpeak(\sigma)} x_2^{\eareccdfall(\sigma)}
        \ytilde_1^{\minval(\sigma)} y_2^{\ereccdrise(\sigma)}
        \:\times
        \qquad\qquad
        \nonumber\\[-1mm]
   & & \qquad\qquad\qquad\:
   u_1^{\nrcpeak(\sigma)} u_2^{\nrcdfall(\sigma)}
   \vtilde_1^{\nminval(\sigma)} v_2^{\nrcdrise(\sigma)}
        \mathbf{w}^{\boldsymbol{\fix}(\sigma)}
 \label{eq.def.Qntilde}
\end{eqnarray}
where
\begin{equation}
	\mathbf{w}^{\boldsymbol{\fix}(\sigma)} \;=\; \prod_{i\in \Fix} w_{\psnest(i,\sigma)}\;.
	\label{eq.wfix}
\end{equation}

We have the following theorem:
\begin{theorem}[{Reformulation of \cite[Theorem~3.1]{Deb_23} using cycle valley minima}]
The ordinary generating function of the polynomials $\widetilde{Q}_n$
defined in~\eqref{eq.def.Qntilde}
has a Jacobi-type continued fraction 
\begin{eqnarray}
   & & \hspace*{-7mm}
   \sum_{n=0}^\infty \widetilde{Q}_n(x_1,x_2, \ytilde_1, y_2,u_1, u_2, \vtilde_1, v_2, \mathbf{w}) t^n
   \;=\;
       \nonumber \\
   & & 
   \cfrac{1}{1 - w_0 t - \cfrac{x_1 \ytilde_1 t^2}{1 -  (x_2\!+\!y_2\!+\!w_1) t - \cfrac{(x_1\!+\!u_1)(\ytilde_1\!+\!\vtilde_1) t^2}{1 - (x_2\!+\!u_2\!+\!y_2\!+\!v_2\!+\!w_2)t - \cfrac{(x_1\!+\!2u_1)(\ytilde_1\!+\!2\vtilde_1) t^2}{1 - \cdots}}}}
       \nonumber \\[1mm]
   \label{eq.thm.perm.reform}
\end{eqnarray}

with coefficients
\begin{subeqnarray}
    \gamma_{0} & = & w_0
         \\[1mm]
    \gamma_{n}   & = & [x_2 \!+\! (n-1)u_2 ] + [y_2 \!+\! (n-1)v_2 ] +  w_n   \qquad\hbox{for $n \ge 1$}
         \\[1mm]
	\beta_n  & = &   [x_1 \!+\! (n-1)u_1 ] [\ytilde_1 \!+\! (n-1)\vtilde_1]
\label{def.weights.thm.Deb}
 \end{subeqnarray}
\label{thm.Deb}
\end{theorem}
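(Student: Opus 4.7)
The plan is to adapt the bijective proof of Deb's original theorem \cite[Theorem~3.1]{Deb_23}, which establishes the Jacobi-type continued fraction for the analog of $\widetilde{Q}_n$ with $\ereccval/\nrcval$ in place of $\minval/\nminval$, by reinterpreting the labels at up-steps in the underlying Foata--Zeilberger-style bijection. The overall structure --- a bijection $\Phi: \mathfrak{S}_n \to \scrm_n(\bfscra,\bfscrb,\bfscrc)$ sending a permutation to a labeled Motzkin path whose step-weights produce the desired monomial --- will remain intact; only the interpretation of the rise-labels changes.

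First, I would recall the FZ-style bijection used in Deb's proof: rises of the Motzkin path correspond to cycle valleys, falls to cycle peaks, and level steps to fixed points or cycle double rises/falls. At each rise from height $n-1$ to $n$ there are $n$ possible label values. In Deb's argument, one distinguished label marks the cycle valley $i$ as an exclusive-record ($\ereccval$) and the remaining $n-1$ mark it as a non-record cycle valley ($\nrcval$), producing the factor $y_1 + (n-1)v_1$ in $\beta_n$.

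For the reformulation, I would instead distinguish the label that marks $i$ as opening a brand-new cycle. Since the elements are processed in the order $1, 2, \ldots, n$, a rise opens a new cycle-chain precisely when $i$ is the smallest element of its cycle, i.e.\ when $i$ is a cycle valley minimum; the remaining $n-1$ labels correspond to the new arch at $i$ being attached to one of the partially-constructed cycles already in progress, in which case $i$ has a cycle-predecessor smaller than itself and so is a cycle valley non-minimum. This $1+(n-1)$ split produces the factor $\ytilde_1 + (n-1)\vtilde_1$ in the new weight $\beta_n$. Since the classification of cycle peaks, cycle double rises, cycle double falls and fixed points is left unchanged, the other factor $x_1 + (n-1)u_1$ of $\beta_n$ and the coefficients $\gamma_n$ agree with those of the original theorem, and the continued fraction~\eqref{eq.thm.perm.reform} follows.

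The main obstacle will be to make precise the ``new cycle versus continuation of existing cycle'' distinction at the level of labels in the specific variant of the FZ bijection used in \cite{Deb_23}, and to verify that this reinterpretation preserves the joint distribution of all the other statistics appearing in~\eqref{eq.def.Qntilde}. This reinterpretation closely parallels the reformulations carried out in \cite[Section~4.3]{Deb_23} and in \cite[Section~4.1.3]{Deb-Sokal_genocchi}, where record-based classifications of cycle peaks were replaced by maximum-of-cycle classifications; the argument should transfer to our setting by dualising ``maximum/cycle peak'' to ``minimum/cycle valley''.
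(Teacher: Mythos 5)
Your overall strategy---reinterpret one distinguished label at each rise of a labelled-path bijection so that it records cycle-valley minimality, keeping everything else intact---is the same in spirit as the paper's proof, but your implementation has a genuine gap, in two related places. First, the base result you propose to modify is not \cite[Theorem~3.1]{Deb_23}: the theorem whose rise-weight splits as $y_1+(n-1)v_1$ according to the $\ereccval/\nrcval$ classification is Sokal--Zeng's first master J-fraction \cite[Theorem~2.2]{Sokal-Zeng_22}, proved with the ordinary Foata--Zeilberger bijection, whereas Deb's Theorem~3.1 is the cycle-counting version whose $\beta_n$ contains the factor $(\lambda+n-1)y_1$. This is not a mere misattribution: under the FZ bijection the distinguished rise label marking an exclusive-record cycle valley is \emph{not} the label marking a cycle-valley minimum, because the two statistics differ pointwise (for $\sigma=4321$ one has $\ereccval(\sigma)=1$ but $\minval(\sigma)=2$); they are only equidistributed, which is part of what is being proved. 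So you cannot simply re-read the same distinguished label with a new meaning.

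Second, the justification you give for the key step---``since the elements are processed in the order $1,2,\ldots,n$, a rise opens a new cycle-chain precisely when $i$ is the smallest element of its cycle''---is false. In the increasing-order history a cycle valley $i$ has both cycle neighbours larger than $i$, so it enters as an isolated vertex and starts a new partial component whether or not it is its cycle's minimum; minimality is decided only at later steps, when partial paths are merged or closed. Indeed $\minval(\sigma)$ equals the number of non-singleton cycles, i.e.\ it is essentially the cycle count, which is exactly the statistic the plain FZ labels do not see and which forced Deb to introduce the Laguerre-digraph history in \cite{Deb_23}. The paper's proof therefore starts from Deb's Theorem~3.1 and invokes \cite[Lemma~6.6]{Deb_23}---in that history the unique cycle-closing choice at a rise occurs exactly at the cycle-valley minima---to replace $(\lambda+n-1)y_1$ by $\ytilde_1+(n-1)\vtilde_1$. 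The ``main obstacle'' you flag at the end is thus the entire substantive content of the argument, and the local ``opens a brand-new cycle'' picture you offer does not supply it; to repair the proposal you would need to work with Deb's cycle-counting theorem and his Lemma~6.6 (or reprove an equivalent identification of the distinguished label with cycle-valley minima).
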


\begin{proof} The proof follows from the proof of
	\cite[Theorem~3.1]{Deb_23}
	the only difference being that 
	we replace the factor $(\lambda + n-1)y_1$
	in \cite[eq.~(3.2c)]{Deb_23}
	with the factor 
	$\ytilde_1 + (n-1)\vtilde_1$.
	We can do this because the variable $\ytilde_1$ now keeps track 
	of cycle valley minima, 
	(which were shown to be cycle closers in \cite[Lemma~6.6]{Deb_23})
	and $\vtilde_1$ keep track of the other cycle valleys.
\end{proof}

\begin{remark}
Note that the factors $\lambda$ multiplying the variables $w_n$ corresponding to fixed points
in \cite[Theorem~3.1]{Deb_23}
have now disappeared because we are no longer giving such factors to singleton cycles.
Even more strikingly, our reformulation in Theorem~\ref{thm.Deb} now looks identical to 
Sokal and Zeng's \cite[Theorem~2.2]{Sokal-Zeng_22} 
except that $y_1,v_1$ have been replaced by $\ytilde_1,\vtilde_1$;
we will soon state this in Theorem~\ref{thm.Deb.prime}${{}^{ \prime}}$.
Moreover, Deb's second and third versions of continued fractions for permutation \cite[Theorems~3.2 and~3.3]{Deb_23}, respectively,
can also be rewritten in a similar way,
but we refrain from writing out the details.
\end{remark}

\subsection{The Corteel involution on permutations}
\label{subsec.Corteel.inv}

In \cite[Proposition~4]{Corteel_07}, Corteel showed that {\em the number of permutations with $l$
crossings and $m$ nestings is equal to the number of permutations with $m$ crossings and $l$ nestings.}\footnote{{\bf Warning:}
We should warn the reader that Corteel's definitions of crossings and nesting is different from ours; hers can be expressed in our notation as follows
\begin{subeqnarray}
{\rm crossings} 
\;=\;
{\ucross}
+\lcross + \cdrise \\
{\rm nestings} 
\;=\;
\unest +\lnest + \psnest
\label{eq.cross.nest.Corteel}
\end{subeqnarray}
See the historical remarks in \cite[Section~2.5]{Sokal-Zeng_22} for more on this story.
}
Her proof implicitly used an involution on permutations
but this was only explicitly written down in
\cite[Map~239]{findstat}. 
%
%

Corteel used the Foata--Zeilberger \cite{Foata_90} bijection
to construct her involution on permutations.
This bijection is a bijection
between permutations and labelled Motzkin paths
that has been very successfully
employed to obtain continued fractions involving polynomial coefficients
counting various permutation statistics (see for example
\cite{Randrianarivony_98b, Corteel_07, Blitvic_21, Sokal-Zeng_22}).
More recently, Deb provided a reinterpretation of this bijection
in terms of Laguerre digraphs \cite{Deb_23}; we will follow his presentation.

The Foata--Zeilberger bijection (as stated in \cite[Section~6.1]{Deb_23})
is a bijective correspondence between $\mathfrak{S}_n$
and the set of
$(\bfscra, \bfscrb,\bfscrc)$-labelled Motzkin paths of length $n$, 
where
the labels $\xi_i$ lie in the sets
\begin{subeqnarray}
	\scra_h        & = & \{0,\ldots, h\}     \,\;\;\qquad\qquad\qquad\quad \qquad \qquad\qquad\qquad \qquad\hbox{for $h \ge 0$}  \\
	\scrb_h        & = &  \{0,\ldots,h-1\}   \qquad\qquad\qquad\quad  \qquad\qquad\qquad \qquad\quad\hbox{for $h \ge 1$}  \\
	\scrc_h  & = &  
	\left(\{1\}\times C_h^{(1)}\right)
	\,\cup\, 
	\left(\{2\}\times C_h^{(2)}\right)
	\,\cup\,
	\left(\{3\}\times C_h^{(3)}\right)
	\qquad\hbox{for $h \ge 0$}  
 \label{def.abc.perm}
\end{subeqnarray}
where
\begin{subeqnarray}
	C_h^{(1)} & = & \{0,\ldots,h-1\} \qquad\hbox{for $h \ge 1$} \\
	C_h^{(2)} & = & \{0,\ldots,h-1\} \qquad\hbox{for $h \ge 1$}  \\
	C_h^{(3)} & = & \{0\}  \qquad\qquad\qquad\;\:\hbox{for $h \ge 0$} 
\label{def.c.perm}
\end{subeqnarray}

A level step that has label 
$\xi_h\in\{i\}\times C_h^{(i)}$
will be called a level step of type $i$ ($i=1,2,3$).
Let $\motzkinperm_{n}$ denote this set of $(\bfscra, \bfscrb,\bfscrc)$-labelled
Motzkin paths of length $n$ with label sets
given by~\eqref{def.abc.perm},\eqref{def.c.perm}.
The Foata--Zeilberger bijection as per \cite[Section~6.1]{Deb_23}
is a bijective map $\FZbij_n : \mathfrak{S}_n\to \motzkinperm_n$.
For the convenience of the reader, 
we relate our terminology to that of Corteel's \cite[Section~3.1]{Corteel_07}:
\begin{itemize}
\item rises
correspond to steps $N$,

\item falls correspond to steps $S$,

\item level steps of type $1$ correspond to steps $\bar{E}$,

\item Corteel considers cycle double rises and fixed points together, 
and hence, her steps $E$ correspond to both level steps of types $2$ and $3$.
\end{itemize}

For what follows, 
we denote by $c_0, c_1, c_2, \ldots, c_h$
the following labels at
a level step at height $h$:
\be
c_i 
\;=\;
\begin{cases}
(3,0) &\quad \text{if $ i = h$}\\
(2,i) &\quad \text{otherwise}
\end{cases}
\;.
\ee
Thus, $\scrc_h = \bigl(\{1\}\times C_{h}^{(1)}\bigr)\cup \{c_0, \ldots, c_h\}$.

We will now describe an involution $\phi_{n}: \motzkinperm_{n} \to \motzkinperm_{n}$;
the Corteel involution will then be obtained by the composition of bijections 
$\bigl(\FZbij_{n}\bigr)^{-1} \circ \phi_{n} \circ \FZbij_{n}$.

Let $(\omega,\xi)\in \motzkinperm_{n}$
where $\xi = (\xi_1,\ldots,\xi_{n})$.
Let $\xicomplement = (\xicomplement_1,\ldots, \xicomplement_{n})$ be a sequence where
$\xicomplement_i$
is given by
\be
\xicomplement_i
\;=\;
\begin{cases}
    h -\xi_i &\qquad \text{if step $i$ is a rise}\\
    h-1 - \xi_i & \qquad \text{if step $i$ is a fall}\\
	(1,h-1-k) &\qquad \text{if step $i$ is a level step of type 1
	and $\xi_i = (1,k)$}\\
	c_{h-k} &\qquad \text{if step $i$ is a level step of types $2$ or $3$ and $\xi_i = c_k$}
\end{cases}
\ee
It is clearly follows that $(\omega,\xicomplement)\in \motzkinperm_{n}$
and that $(\xicomplement)^{{\rm c}} = \xi$.
Thus, the map $(\omega, \xi)\mapsto (\omega,\xicomplement)$
is an involution on the set $\motzkinperm_{n}$;
we use $\phi_{n}$ to denote this involution.
The Corteel involution on permutations
is the the map $\phi_n^{{\rm C}}\coloneqq \bigl(\FZbij_{n}\bigr)^{-1} \circ \phi_{n} \circ \FZbij_{n}$.

An important property of the Corteel involution is that crossings and nestings, in the sense of Corteel, are exchanged.
More precisely, we state this in the following proposition without proof.

\begin{proposition}
For a permutation $\sigma\in\mathfrak{S}_n$ define $\widehat{\sigma} \coloneqq \phi_n^{{\rm C}}(\sigma)$.
Then if an index $i\in [n]$ is a cycle peak, cycle valley, or a cycle double fall in $\sigma$,
it continues to have the same cycle type in $\widehat{\sigma}$;
if $i\in \Cdrise(\sigma) \cup \Fix(\sigma)$  then 
$i\in \Cdrise(\widehat{\sigma}) \cup \Fix(\widehat{\sigma})$ .
Also, we have the following equidistribution
\begin{eqnarray}
&&
\hspace*{-15mm}
\bigl(\ucross(\sigma) + \lcross(\sigma) + \cdrise(\sigma), \unest(\sigma)+ \lnest(\sigma) +\psnest(\sigma)\bigr)
\nonumber\\
&=&
\bigl(\unest(\widehat{\sigma})+\lnest(\widehat{\sigma})+\psnest(\widehat{\sigma}),  \ucross(\widehat{\sigma})+ \lcross(\widehat{\sigma})+\cdrise(\widehat{\sigma}) \bigr) \;.
\end{eqnarray}
\end{proposition}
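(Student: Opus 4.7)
The proposition decomposes into two claims---preservation of the cycle-classification of each index, and the global exchange of compound crossings with compound nestings---both of which I would establish by transporting the statements through the Foata--Zeilberger bijection $\FZbij_n$ and reading off the (essentially trivial) action of $\phi_n$ on $\motzkinperm_n$.

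For the first claim, I would invoke the step-to-cycle-type dictionary recorded in \cite[Section~6.1]{Deb_23}: rises$\,\leftrightarrow\,$cycle valleys, falls$\,\leftrightarrow\,$cycle peaks, level steps of type~1$\,\leftrightarrow\,$cycle double falls, level steps of type~2$\,\leftrightarrow\,$cycle double rises, and level steps of type~3$\,\leftrightarrow\,$fixed points. Since $\phi_n$ leaves the underlying Motzkin path $\omega$ untouched and only rewrites the labels, every step retains its step-type, so cycle peaks, cycle valleys, and cycle double falls are all preserved by $\phi_n^{\rm C}$. The relabelling rule $c_k\mapsto c_{h-k}$ on level steps of types~2 and~3 swaps the extremal label $c_h=(3,0)$ (type~3) with $c_0=(2,0)$ (type~2), so an individual index may migrate between $\Fix$ and $\Cdrise$; but the union $\Fix\cup\Cdrise$ is preserved, which is exactly what the proposition asserts.

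For the crossing/nesting exchange, the key input is the standard interpretation of the Foata--Zeilberger labels as local counts at each index: at a rise of height $h$, the label $\xi_i\in\{0,\ldots,h\}$ may be taken to be $\ucross(i,\sigma)$, with the complementary value $h-\xi_i$ equal to $\unest(i,\sigma)$; at a fall, one of $\lcross(i,\sigma)$ and $\lnest(i,\sigma)$ is the label and the other its complement $h-1-\xi_i$; at a level step of type~1 the label separates the $\cdrise$-contribution from the $\psnest$-contribution at a cycle double fall; and at a level step of type~3 the label encodes the pseudonesting of a fixed point. Each of the reflections $\xi_i\mapsto h-\xi_i$, $\xi_i\mapsto h-1-\xi_i$, $(1,k)\mapsto(1,h-1-k)$, and $c_k\mapsto c_{h-k}$ that together constitute $\phi_n$ is engineered to swap precisely the two members of the corresponding local pair, so summing these local swaps over $i\in[n]$ yields the claimed global bi-equidistribution. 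The principal obstacle is the bookkeeping of which member of each pair (crossing versus nesting, upper versus lower) is recorded at each step-type in Deb's version of $\FZbij_n$; once this is pinned down, the conclusion follows at once from the involutivity of each local reflection and no further computation is required.
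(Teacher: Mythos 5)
The paper states this proposition \emph{without proof}, so there is no official argument to compare against; your route---transporting everything through $\FZbij_n$ and observing that $\phi_n$ fixes the Motzkin path and only complements the labels---is exactly the intended one. Your treatment of the cycle-classification claim is correct: step types are preserved, so cycle valleys (rises), cycle peaks (falls) and cycle double falls (type-1 level steps) keep their types, while the rule $c_k\mapsto c_{h-k}$ can only exchange the fixed-point label $c_h=(3,0)$ with the cycle-double-rise label $c_0=(2,0)$, which is why only the union $\Fix\cup\Cdrise$ is claimed to be preserved.

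Where your sketch goes wrong, or at least stops short, is precisely at the one delicate point: the local bookkeeping at level steps. A type-1 level step is a cycle double fall and its label splits $\lcross(i,\sigma)$ from $\lnest(i,\sigma)$, not ``the $\cdrise$-contribution from the $\psnest$-contribution''; a type-3 level step carries the single forced label $(3,0)$ and encodes nothing, since $\psnest(i,\sigma)$ equals the height $h$ and is read off the path; and type-2 level steps, which you omit, are where the real work lies, because Corteel's crossings include a $+1$ for each cycle double rise while her nestings include $\psnest$ of each fixed point, and these extra contributions must be swapped by the very same relabelling that moves indices between $\Cdrise$ and $\Fix$. The computation you defer is short but should be done: at a level step at height $h$, the label $c_k$ with $k<h$ is a cycle double rise with $\unest(i,\sigma)=k$ and $\ucross(i,\sigma)=h-1-k$ (the label-interpretation lemmas of \cite[Section~6.1]{Deb_23} or \cite[Section~6.1]{Sokal-Zeng_22} pin down which member of the pair is the label, so there is no need to hedge), hence its local contribution to Corteel's (crossings, nestings) is $(h-k,\,k)$; the label $c_h$ is a fixed point contributing $(0,\,h)$ via $\psnest(i,\sigma)=h$. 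Since $\phi_n$ sends $c_k\mapsto c_{h-k}$, these pairs are exchanged in all cases, including the boundary cases $k=0$ and $k=h$ where the index migrates between $\Cdrise$ and $\Fix$. Together with the reflections $\xi_i\mapsto h-\xi_i$ at rises and $\xi_i\mapsto h-1-\xi_i$ at falls and type-1 steps, which swap the upper (resp.\ lower) crossing and nesting counts as you say, this gives the asserted pair identity index by index, and summing over $i$ finishes the proof. With that paragraph written out, your argument is complete and correct.
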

%
%

Adams et al counted the number of fixed points of the Corteel involution which we now recall.

\begin{lemma}[{\cite[Lemma~4]{Adams_24}}]
The number of fixed points of $\mathfrak{S}_n$
under the action of the Corteel involution $\phi_n^{\rm C}$
is $1$ when $n=0$ and $2^{n-1}$ when $n\geq 1$.
\end{lemma}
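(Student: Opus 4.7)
The plan is to transport the problem along the Foata--Zeilberger bijection. Since $\phi_n^{\rm C} = (\FZbij_n)^{-1} \circ \phi_n \circ \FZbij_n$ and $\FZbij_n : \mathfrak{S}_n \to \motzkinperm_n$ is a bijection, it suffices to count the fixed points of $\phi_n$ on $\motzkinperm_n$. The case $n=0$ is trivial since the only path is empty and trivially fixed, so from now on assume $n\ge 1$.

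Next I would perform a step-by-step analysis of the condition $\xi_i = \xicomplement_i$, using the explicit formula for $\xicomplement$. A rise starting at height $h_{i-1}$ is fixed iff $h_{i-1}$ is even, with unique fixed label $h_{i-1}/2$; a fall starting at height $h_{i-1}$ is fixed iff $h_{i-1}$ is odd, with unique fixed label $(h_{i-1}-1)/2$; and a level step at height $h$ always admits exactly one fixed label (a type-$1$ label $(1,(h-1)/2)$ when $h$ is odd, and a type-$2$/$3$ label $c_{h/2}$ when $h$ is even). Thus every step has at most one fixed label, with level steps always having exactly one.

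The key observation I would then exploit is that these parity constraints force the underlying Motzkin path $\omega$ to remain in heights $\{0,1\}$: rises are only allowed from even heights and falls only from odd heights, so starting at $0$ the path can oscillate between $0$ and $1$ but can never climb to $2$, which would require rising from the odd height $1$. Consequently a fixed labelled path corresponds to a $\{0,1\}$-valued Motzkin path of length $n$ returning to $0$, together with its unique compatible labelling. I would then encode each step as $S$ (stay at the current height, realised by a level step) or $X$ (switch heights, realised by a rise from $0$ or a fall from $1$). Every word in $\{S,X\}^n$ yields a valid $\{0,1\}$-path, and such a path ends at $0$ iff the number of $X$'s is even. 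The number of binary words of length $n$ with an even number of $X$'s is $2^{n-1}$, which is the claimed count.

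The main subtlety to verify carefully is the height truncation to $\{0,1\}$: one must check that this really is forced rather than merely permitted. This follows immediately from the step-by-step analysis above, since from any odd height the fixed-point regime forbids rises and from any even height it forbids falls; starting at $0$ one therefore never accumulates enough upward moves to reach height $2$. The rest of the argument is bookkeeping.
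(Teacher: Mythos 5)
Your argument is correct. Note, however, that the paper does not actually prove this lemma: it simply quotes \cite[Lemma~4]{Adams_24} and records, without derivation, the finite-depth J-fraction \eqref{eq.corteel.fp.contfrac} for the fixed-point counts. What you have written is essentially the proof the paper gives for the \emph{analogous} statements (Lemma~\ref{lem.csp.fixed.KZ} for the Kasraoui--Zeng involution and Theorem~\ref{thm.fixedpoint.GC} for the Genocchi--Corteel involution): transport the involution through the path bijection, observe that each step admits at most one self-complementary label (exactly one for level steps, and for rises/falls only when the starting height has the right parity), conclude that a fixed labelled path is forced to stay at heights $\le 1$ and is then uniquely labelled, and count the resulting restricted Motzkin paths. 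Your verification of the parity analysis is right: with $h=h_{i-1}$ the starting height, a rise is self-complementary only for $h$ even (label $h/2$), a fall only for $h$ odd (label $(h-1)/2$), and a level step always has a unique fixed label, so height $2$ is unreachable. Where you diverge from the paper's style is only at the final step: instead of invoking Flajolet's theory to write the generating function of height-$\le 1$ Motzkin paths as the continued fraction in \eqref{eq.corteel.fp.contfrac} (which indeed simplifies to $(1-t)/(1-2t) = 1+\sum_{n\ge 1}2^{n-1}t^n$), you count the surviving paths directly as binary stay/switch words with an even number of switches, giving $2^{n-1}$ for $n\ge 1$. Both routes are valid; your elementary count is self-contained, while the continued-fraction formulation is what the paper later needs when matching these fixed-point counts against specialised J-fractions in Proposition~\ref{prop.perm.csp}.
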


\noindent Additionally, we mention that that this sequence has the following 
Jacobi-type continued fraction, albeit of finite depth:
\be
1 + \sum_{n=1}^\infty 2^{n-1} t^n
\;=\;
\cfrac{1}{1 -  t - \cfrac{t^2}{1 - t}}\;.
\label{eq.corteel.fp.contfrac}
\ee

\subsection{Sokal and Zeng's
multivariate polynomials enumerating cyclic statistics on permutations}
\label{subsec.szpoly.permutations}

In this section, we recall two of Sokal and Zeng's multivariate polynomials enumerating
statistics on  permutations;
first we shall state the first polynomial \cite[eq.~(2.23)]{Sokal-Zeng_22},
then we shall state their first $p,q$-generalisation involving crossings and nestings \cite[eq.~(2.51)]{Sokal-Zeng_22}.
We recall that
we will use the glossary of permutation statistics 
as stated in \cite[Section~2.5]{Deb_thesis};
this includes the record-and-cycle classification 
\cite[Section~2.5.1]{Deb_thesis},
and crossings~and~nestings
\cite[Section~2.5.2]{Deb_thesis}.

We first recall the polynomial \cite[eq.~(2.23)]{Sokal-Zeng_22}:
\begin{eqnarray}
   & &
   Q_n^{(1)}(x_1,x_2,y_1,y_2,u_1,u_2,v_1,v_2,\mathbf{w})
   \;=\;
       \nonumber \\[4mm]
   & & \qquad\qquad
   \sum_{\sigma \in \mathfrak{S}_{n}}
   x_1^{\eareccpeak(\sigma)} x_2^{\eareccdfall(\sigma)}
   y_1^{\ereccval(\sigma)} y_2^{\ereccdrise(\sigma)}
   \:\times
       \qquad\qquad
       \nonumber \\[-1mm]
   & & \qquad\qquad\qquad\:
   u_1^{\nrcpeak(\sigma)} u_2^{\nrcdfall(\sigma)}
   v_1^{\nrcval(\sigma)} v_2^{\nrcdrise(\sigma)}
   \mathbf{w}^{\boldsymbol{\fix}(\sigma)}
 \label{def.Qn}
\end{eqnarray}
where $\mathbf{w}^{\boldsymbol{\fix}(\sigma)}$ was defined in \eqref{eq.wfix}.
The polynomials \reff{def.Qn} have a beautiful J-fraction 
\cite[Theorem~2.2]{Sokal-Zeng_22}.

Next we recall the polynomial \cite[eq.~(2.51)]{Sokal-Zeng_22}
which is a $p,q$-generalisation of the polynomials $Q_n^{(1)}$:
\begin{eqnarray}
   & &
   \hspace*{-14mm}
   Q_n^{(2)}(x_1,x_2,y_1,y_2,u_1,u_2,v_1,v_2,\mathbf{w},
   p_{+1},p_{+2},p_{-1},p_{-2},q_{+1},q_{+2},q_{-1},q_{-2},s)
   \;=\;
   \hspace*{-1cm}
       \nonumber \\[4mm]
   & & \qquad\qquad
   \sum_{\sigma \in \mathfrak{S}_{n}}
   x_1^{\eareccpeak(\sigma)} x_2^{\eareccdfall(\sigma)}
   y_1^{\ereccval(\sigma)} y_2^{\ereccdrise(\sigma)}
   \:\times
       \qquad\qquad
       \nonumber \\[-1mm]
   & & \qquad\qquad\qquad\:
   u_1^{\nrcpeak(\sigma)} u_2^{\nrcdfall(\sigma)}
   v_1^{\nrcval(\sigma)} v_2^{\nrcdrise(\sigma)}
   \mathbf{w}^{\boldsymbol{\fix}(\sigma)}
   \:\times
       \qquad\qquad
       \nonumber \\[3mm]
   & & \qquad\qquad\qquad\:
   p_{+1}^{\ucrosscval(\sigma)}
   p_{+2}^{\ucrosscdrise(\sigma)}
   p_{-1}^{\lcrosscpeak(\sigma)}
   p_{-2}^{\lcrosscdfall(\sigma)}
          \:\times
       \qquad\qquad
       \nonumber \\[3mm]
   & & \qquad\qquad\qquad\:
   q_{+1}^{\unestcval(\sigma)}
   q_{+2}^{\unestcdrise(\sigma)}
   q_{-1}^{\lnestcpeak(\sigma)}
   q_{-2}^{\lnestcdfall(\sigma)}
   s^{\psnest(\sigma)}\;.
 \label{def.Qn.pq}
\end{eqnarray}
These polynomials~\eqref{def.Qn.pq} also have a nice continued fraction
\cite[Theorem~2.7]{Sokal-Zeng_22}.
Sokal and Zeng have also introduced several other multivariate polynomials in their paper,
in particular, their {\em second} polynomials involve the count of cycles. 
We, however, refrain from working with those.
We will instead use our polynomial \eqref{eq.def.Qntilde} 
which has the continued fraction in Theorem~\ref{thm.Deb}
which generalises their second J-fraction \cite[Theorem~2.4]{Sokal-Zeng_22}.

Notice that we can use \cite[Theorem~2.2]{Sokal-Zeng_22}
to restate Theorem~\ref{thm.Deb}.

\addtocounter{theorem}{-3}
\begin{theorem}\hspace{-2mm}${{}^{\bf \prime}}$
The ordinary generating function of the polynomials\\ 
$\widetilde{Q}_n(x_1,x_2, \ytilde_1, y_2,u_1, u_2, \vtilde_1, v_2, \mathbf{w})$
defined in
\eqref{eq.def.Qntilde}
along with the substitution $\ytilde_1 = y_1, \vtilde_1=v_1$
has the same Jacobi-type continued fraction
as the polynomial
$Q_n^{(1)}(x_1,x_2,y_1,y_2,u_1,u_2,v_1,v_2,\mathbf{w})$
given in \cite[Theorem~2.2]{Sokal-Zeng_22}.
Therefore,
\be
\widetilde{Q}_n(x_1,x_2, y_1, y_2,u_1, u_2, v_1, v_2, \mathbf{w})
\;=\;
Q_n^{(1)}(x_1,x_2,y_1,y_2,u_1,u_2,v_1,v_2,\mathbf{w})\;.
\ee
\label{thm.Deb.prime}
\end{theorem}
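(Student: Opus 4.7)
The plan is very short: compare the Jacobi-type continued fraction expansions on both sides and observe that after the substitution $\ytilde_1=y_1,\ \vtilde_1=v_1$ they become literally identical.

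More precisely, by Theorem~\ref{thm.Deb} the generating function $\sum_{n\ge0}\widetilde{Q}_n t^n$ has a J-fraction whose weights are given by~\eqref{def.weights.thm.Deb}, namely
\[
\gamma_0=w_0,\quad
\gamma_n=[x_2+(n-1)u_2]+[y_2+(n-1)v_2]+w_n,\quad
\beta_n=[x_1+(n-1)u_1]\,[\ytilde_1+(n-1)\vtilde_1].
\]
Performing the substitution $\ytilde_1=y_1$, $\vtilde_1=v_1$ in $\beta_n$ turns this into $[x_1+(n-1)u_1]\,[y_1+(n-1)v_1]$, while $\gamma_n$ is unaffected because neither $\ytilde_1$ nor $\vtilde_1$ appears there. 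On the other hand, the polynomials $Q_n^{(1)}$ of \cite[eq.~(2.23)]{Sokal-Zeng_22} are shown in \cite[Theorem~2.2]{Sokal-Zeng_22} to have a J-fraction with exactly these same $\gamma_n$ and $\beta_n$.

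Since a formal power series has at most one J-type continued fraction expansion (the coefficients $\gamma_n,\beta_n$ being uniquely determined by the Hankel determinants of the series), the two ordinary generating functions $\sum_{n\ge0}\widetilde{Q}_n\,t^n\bigl|_{\ytilde_1=y_1,\,\vtilde_1=v_1}$ and $\sum_{n\ge0}Q_n^{(1)}\,t^n$ coincide as formal power series in~$t$ with coefficients in $\mathbb{Z}[x_1,x_2,y_1,y_2,u_1,u_2,v_1,v_2,\mathbf{w}]$. Equating coefficients of $t^n$ yields the claimed identity
\[
\widetilde{Q}_n(x_1,x_2,y_1,y_2,u_1,u_2,v_1,v_2,\mathbf{w})
\;=\;Q_n^{(1)}(x_1,x_2,y_1,y_2,u_1,u_2,v_1,v_2,\mathbf{w}).
\]
There is essentially no obstacle here: the only thing that might require a brief sanity check is that the sets of variables and the roles of $\gamma_n,\beta_n$ in the two statements line up term-by-term, which is immediate from the displayed formulas.
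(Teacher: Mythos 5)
Your proposal is correct and takes essentially the same route as the paper, which states Theorem~\ref{thm.Deb.prime} precisely as the observation that after $\ytilde_1=y_1$, $\vtilde_1=v_1$ the J-fraction of Theorem~\ref{thm.Deb} coincides with that of \cite[Theorem~2.2]{Sokal-Zeng_22}, whence the generating functions and therefore the polynomials agree. One small remark: the fact you actually need is the easy direction that a given J-fraction (i.e.\ given $\gamma_n,\beta_n$) expands to a unique formal power series, not the converse uniqueness of a series' J-fraction via Hankel determinants which you invoke; this does not affect the validity of the argument.
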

\addtocounter{theorem}{2}

%
%
%

\subsection{Instances of the cyclic sieving phenomenon}
\label{subsec.csp.permutations}

We are now ready to state instances of the cyclic sieving phenomena on permutations 
with respect to the Corteel involution
or as a matter of fact, 
for any involution having $1$ fixed point when $n=0$ 
and $2^{n-1}$ fixed points when $n\geq 1$.

We first state a very general result in Proposition~\ref{prop.perm.csp}.

\begin{proposition}
Let $\Stat{} : \mathfrak{S}_n \to \mathbb{N}$ be a statistic defined on permutations.
In each of the following cases
the statistic $\Stat{}$ exhibits the cyclic sieving phenomenon with respect to
involutions that have $1$ fixed point when $n=0$ and $2^{n-1}$ fixed points when $n\geq 1$.

\begin{itemize}
	\item[(a)] $\Stat{}$ is obtained as a specialisation of the polynomial\\
	$Q^{(1)}_n(x_1,x_2, y_1, y_2,u_1, u_2, v_1, v_2, \mathbf{w})$
	defined in~\eqref{def.Qn}, 
	i.e.,
	there exists polynomials 
	$x_1(q)$, $x_2(q)$, $y_1(q)$,$y_2(q)$,$v_1(q)$,$v_2(q) \in\mathbb{R}[q]$
	and $w_k(q)\in \mathbb{R}[q]$ for $k\geq 0$
	satisfying
	\be
	\sum_{\sigma\in \mathfrak{S}_n} q^{\Stat{}} 
		\;=\;
	Q^{(1)}_n\biggl(
	x_1(q),x_2(q), y_1(q), y_2(q),u_1(q), u_2(q), v_1(q), v_2(q), (w_k(q))_{k\geq 0}
	\biggr)
	\ee
	for all $n\geq 0$.
	Further assume that the following equations are satisfied
	\begin{subeqnarray}
		x_1(-1) \cdot 
		y_1(-1)
		&=&
		1
		\\
		w_0(-1)
		&=&
		1
		\\
		x_2(-1) \,+\, y_2(-1) \,+\, w_1(-1)
		&=&
		1
		\\		
		\biggl[x_1(-1) \,+\, u_1(-1) \biggr]
		\times
		\biggl[y_1(-1) \,+\, v_1(-1) \biggr]
		&=&
		0
	\label{eq.csp.cond.perm}
	\end{subeqnarray}

	\item[(b)]
	$\Stat{}$ is obtained as a specialisation of the polynomial\\ 
	$\widetilde{Q}_n(x_1,x_2, y_1, y_2,u_1, u_2, v_1, v_2, \mathbf{w})$
	defined in~\eqref{eq.def.Qntilde}.
	Also assume that equations~\eqref{eq.csp.cond.perm}
	with 
	$y_1,v_1$ replaced by $\ytilde_1,\vtilde_1$
	are satisfied.

	\item[(c)] $\Stat{}$ is obtained as a specialisation of the polynomial\\ 
	   $Q_n^{(2)}(x_1,x_2,y_1,y_2,u_1,u_2,v_1,v_2,\mathbf{w},
   p_{+1},p_{+2},p_{-1},p_{-2},q_{+1},q_{+2},q_{-1},q_{-2},s)$\\
	defined in~\eqref{def.Qn.pq}, i.e.,
	there exists polynomials 
	$x_1(q)$, $x_2(q)$, $y_1(q)$, $y_2(q)$, $v_1(q)$, $v_2(q),$
	$p_{+1}(q)$,
	$p_{+2}(q)$, $p_{-1}(q)$, $p_{-2}(q)$, $q_{+1}(q)$,
	$q_{+2}(q)$,
	$q_{-1}(q)$, $q_{-2}(q)$, $s(q)$
	$\in\mathbb{R}[q]$
	and $w_k(q)\in \mathbb{R}[q]$ for $k\geq 0$
	such that 
	\begin{eqnarray}
	\sum_{\sigma\in \mathfrak{S}_n} q^{\Stat{}} &=&
		\nonumber\\
	&&
	\hspace*{-20mm}
	\Scale[0.6]{
Q^{(2)}_n\biggl(
	x_1(q),x_2(q), y_1(q), y_2(q),u_1(q), u_2(q), v_1(q), v_2(q), (w_k(q))_{k\geq 0},
	p_{+1}(q),
	p_{+2}(q), p_{-1}(q), p_{-2}(q), q_{+1}(q),
	q_{+2}(q),
	q_{-1}(q), q_{-2}(q), s(q)	
	\biggr)}
	\nonumber\\
	\end{eqnarray}
	for all $n\geq 0$.
	Further assume that the equations~(\ref{eq.csp.cond.perm}a,b)
	along with the following equations
	are satisfied
	\begin{subeqnarray}
		x_2(-1) \,+\, y_2(-1) \,+\, s(-1)\cdot w_1(-1)
		&=&
		1
		\\				
		\biggl[p_{-1}(-1)\cdot x_1(-1) \,+\, q_{-1}(-1)\cdot u_1(-1) \biggr]
		\times
		&&
				\nonumber\\
		\biggl[p_{+1}(-1)\cdot y_1(-1) \,+\,q_{+1}(-1)\cdot v_1(-1) \biggr]
		&=&
		0
	\label{eq.csp.cond.perm.pqgen}
	\end{subeqnarray}

\end{itemize}
\label{prop.perm.csp}
\end{proposition}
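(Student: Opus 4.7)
\textbf{Proof proposal for Proposition~\ref{prop.perm.csp}.}
The plan is to reduce the cyclic sieving phenomenon to a computation of $f_n(-1)$, where $f_n(q) \coloneqq \sum_{\sigma \in \mathfrak{S}_n} q^{\Stat(\sigma)}$.  Since any involution with $|X^\omega| = 2^{n-1}$ fixed points for $n \ge 1$ (and $1$ for $n = 0$) gives an instance of the $q = -1$ phenomenon precisely when $f_n(-1)$ equals this number of fixed points, it suffices to show that
\be
\sum_{n=0}^\infty f_n(-1) \, t^n \;=\; \cfrac{1}{1 - t - \cfrac{t^2}{1 - t}}
\ee
as in \eqref{eq.corteel.fp.contfrac}.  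The strategy in all three cases is to exploit the J-fraction representation of the relevant multivariate polynomial, specialise the indeterminates to the prescribed polynomials in $q$, then set $q = -1$, and verify that the resulting J-fraction collapses to \eqref{eq.corteel.fp.contfrac}.

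For case~(a), I would begin by invoking \cite[Theorem~2.2]{Sokal-Zeng_22}, which expresses $\sum_n Q_n^{(1)} t^n$ as a J-fraction with coefficients
\begin{subeqnarray}
\gamma_0 &=& w_0, \\
\gamma_n &=& [x_2 + (n-1)u_2] + [y_2 + (n-1)v_2] + w_n \quad (n \ge 1), \\
\beta_n &=& [x_1 + (n-1)u_1][y_1 + (n-1)v_1].
\end{subeqnarray}
Substituting the specialising polynomials and then $q = -1$, conditions (a--d) of \eqref{eq.csp.cond.perm} give $\gamma_0(-1) = 1$, $\beta_1(-1) = 1$, $\gamma_1(-1) = 1$, and $\beta_2(-1) = 0$ respectively.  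Since $\beta_2(-1) = 0$, the J-fraction truncates after the second level, yielding exactly \eqref{eq.corteel.fp.contfrac}.  This forces $f_n(-1) = 2^{n-1}$ for $n \ge 1$ and $f_0(-1) = 1$, completing case~(a).

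For case~(b), I would simply invoke Theorem~\ref{thm.Deb} (or equivalently Theorem~\ref{thm.Deb.prime}${}^{\prime}$), which gives $\widetilde{Q}_n$ the same J-fraction as $Q_n^{(1)}$ up to the renaming $y_1, v_1 \mapsto \ytilde_1, \vtilde_1$; the computation is then identical to case~(a).  For case~(c), the plan is analogous: apply \cite[Theorem~2.7]{Sokal-Zeng_22} for the J-fraction of $Q_n^{(2)}$, whose weights are the $(p,q,s)$-deformations $\gamma_n$ with $w_n$ weighted by $s^n$ and $\beta_n$ factored as $[x_1 p_{-1}^{\cdot} + u_1 q_{-1}^{\cdot} \, [n-1]_{\ldots}] [y_1 p_{+1}^{\cdot} + v_1 q_{+1}^{\cdot} \, [n-1]_{\ldots}]$.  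Setting $q = -1$ and combining \eqref{eq.csp.cond.perm.pqgen} with parts (a,b) of \eqref{eq.csp.cond.perm} gives the same collapse: $\gamma_0(-1) = \gamma_1(-1) = 1$, $\beta_1(-1) = 1$, and $\beta_2(-1) = 0$, so the J-fraction again reduces to \eqref{eq.corteel.fp.contfrac}.

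The main technical obstacle is bookkeeping for case~(c): one must carefully track how the $(p,q,s)$-weights from \cite[Theorem~2.7]{Sokal-Zeng_22} interact with the record/non-record classification at level~$1$ (where there is no cycle above the current height and hence $\beta_1$ involves only the ``record'' contributions $x_1 y_1$) versus level~$2$ (where the $(q_{+1}, q_{-1})$-weighted non-record terms first appear), and check that the given conditions~\eqref{eq.csp.cond.perm.pqgen} correctly force $\gamma_1(-1) = 1$ and $\beta_2(-1) = 0$ in spite of the extra $p,q,s$ factors.  Once this verification is complete, the rest is mechanical.
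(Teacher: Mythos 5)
Your proposal is correct and follows essentially the same route as the paper: specialise the Sokal--Zeng J-fractions (\cite[Theorems~2.2 and~2.7]{Sokal-Zeng_22}) and Theorem~\ref{thm.Deb}, set $q=-1$, and observe that the hypotheses force $\gamma_0(-1)=\gamma_1(-1)=\beta_1(-1)=1$ and $\beta_2(-1)=0$, so the continued fraction truncates to~\eqref{eq.corteel.fp.contfrac}, giving $f_n(-1)=2^{n-1}$ for $n\geq 1$. The only difference is that you spell out the coefficient check that the paper leaves implicit, which is a harmless (indeed helpful) elaboration.
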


\begin{proof}
(a) We replace the variables in \cite[Theorem~2.2]{Sokal-Zeng_22} with  polynomials
$x_1(q)$, $x_2(q)$, $y_1(q)$, $y_2(q)$, $v_1(q)$, $v_2(q)$, $(w_k(q))_{k\geq 0}$ 
and then set $q=-1$.
Using the conditions in~\eqref{eq.csp.cond.perm}, 
it follows that the continued fraction~\cite[eq.~(2.24)\//(2.25)]{Sokal-Zeng_22}
under this substitution becomes~\eqref{eq.corteel.fp.contfrac}.


The proof of (b) and (c) is similar to that of (a)
but instead we use Theorem~\ref{thm.Deb} and
\cite[Theorem~2.7]{Sokal-Zeng_22}, respectively.
\end{proof}

\medskip

As a consequence of Proposition~\ref{prop.perm.csp},
we can state our main theorem for permutations as an easy corollary:

\begin{theorem}[CSP for permutations]
Let $\Stat, \Stat_\gamma,\Stat_\beta$ be three statistics on permutations satisfying $\Stat = \Stat_\beta+\Stat_\gamma$.
Assume that $\Stat_\gamma$ is one of the following statistics:
\begin{quote}
exclusive antirecord cycle double fall, exclusive record cycle double rise, 
cycle double rise, 
cycle double fall,\\
neither-record-antirecord fixed point,  pseudo-nesting.
\end{quote}
Also, assume that $\Stat_\beta$ is one of the following statistics:
\begin{quote}
neither-record antirecord cycle peaks, neither-record antirecord cycle valleys, cycle valley non-minima,\\
$\eareccpeak+\ereccval$, $\eareccpeak+\minval$\\
$\nrcpeak +\nrcval+\nrcdrise+\nrcdfall$,\\
lower crossings of type cpeak, lower crossings\\
lower nestings of type cpeak, lower nestings,\\
upper crossings of type cval, upper crossings, \\
upper nestings of type cval, upper nestings,\\
$\ucross + \lcross$, $\lnest + \unest$
\end{quote}

Then $\Stat$ exhibits the cyclic sieving phenomenon
with respect to
involutions that have~$1$ fixed point when $n=0$ and $2^{n-1}$ fixed points when $n\geq 1$
(in particular, with respect to the Corteel involution).
\label{thm.perm.csp}
\end{theorem}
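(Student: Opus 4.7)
The plan is to apply Proposition~\ref{prop.perm.csp} to each pair $(\Stat_\beta, \Stat_\gamma)$ from the given lists: I will choose the appropriate polynomial---$Q_n^{(1)}$ from~\eqref{def.Qn} for the basic options, $\widetilde{Q}_n$ from~\eqref{eq.def.Qntilde} whenever $\minval$ or $\nminval$ is involved, and $Q_n^{(2)}$ from~\eqref{def.Qn.pq} whenever crossings or nestings are involved---and specialize to $q$ every variable tracking an individual statistic appearing in $\Stat_\beta + \Stat_\gamma$, leaving all other variables equal to $1$. Since each variable of these master polynomials records a single permutation statistic, this specialization produces exactly $\sum_{\sigma \in \mathfrak{S}_n} q^{\Stat(\sigma)}$, and it remains to verify the four conditions at $q=-1$ required by Proposition~\ref{prop.perm.csp}.

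The key observation is that the conditions decouple nicely: $w_0(-1) = 1$ is automatic (no option touches $w_0$; $\nrfix$ is specialized by setting $w_k = q$ only for $k \geq 1$); the product condition $x_1(-1)\,y_1(-1) = 1$ (or $x_1(-1)\,\ytilde_1(-1) = 1$) is automatic unless $\Stat_\beta$ equals $\eareccpeak+\ereccval$ or $\eareccpeak+\minval$, in which case it becomes $(-1)(-1) = 1$; the ``$\gamma$-condition'' $x_2(-1) + y_2(-1) + s(-1)\,w_1(-1) = 1$ depends only on $\Stat_\gamma$; and the ``$\beta$-condition'' $[p_{-1}(-1)\,x_1(-1) + q_{-1}(-1)\,u_1(-1)][p_{+1}(-1)\,y_1(-1) + q_{+1}(-1)\,v_1(-1)] = 0$ depends only on $\Stat_\beta$. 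The decoupling is genuine because the variables $u_2, v_2, p_{\pm 2}, q_{\pm 2}$ that appear in mixed $\Stat_\beta$ options such as $\nrcpeak+\nrcval+\nrcdrise+\nrcdfall$ or $\lcross = \lcrosscpeak + \lcrosscdfall$ only affect $\gamma_n$ for $n \geq 2$ and never $\gamma_1$.

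The $\gamma$-condition is then verified directly for each of the six options for $\Stat_\gamma$: $\eareccdfall$ sets $x_2 = q$, giving $-1+1+1=1$; $\ereccdrise$ sets $y_2 = q$, giving $1-1+1=1$; $\cdrise$ and $\cdfall$ set $\{y_2, v_2\}$ or $\{x_2, u_2\}$ to $q$, and since $u_2, v_2$ are absent from $\gamma_1$ they yield the same computation as $\ereccdrise$ and $\eareccdfall$; $\nrfix$ yields $1+1-1=1$; and $\psnest$ (via $s = q$) yields $1+1-1=1$. The $\beta$-condition is verified analogously for each $\Stat_\beta$: the single-variable options $\nrcpeak$, $\nrcval$, $\nminval$ set $u_1$, $v_1$, or $\vtilde_1$ to $q$, making one factor of $\beta_2$ equal $[1-1]=0$; the pair options $\eareccpeak+\ereccval$ and $\eareccpeak+\minval$ set both factors to vanish while enforcing the product condition; the composite $\nrcpeak+\nrcval+\nrcdrise+\nrcdfall$ kills both factors of $\beta_2$; and each of the crossing/nesting options sets one of $p_{\pm 1}, q_{\pm 1}$ to $q$, directly forcing the corresponding factor to zero.

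The main ``obstacle'' is simply the case bookkeeping across the $6 \times (\text{many})$ combinations; once the decoupling observation is in hand, each verification reduces to a one-line arithmetic check. After carrying this out for all combinations specified by the theorem, the conclusion follows immediately from the appropriate part of Proposition~\ref{prop.perm.csp}.
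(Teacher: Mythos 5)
Your proposal is correct and follows essentially the same route as the paper, which presents Theorem~\ref{thm.perm.csp} as an immediate corollary of Proposition~\ref{prop.perm.csp} via exactly the specializations you describe: set to $q$ the variables tracking the chosen statistics, and check that at $q=-1$ the continued-fraction data become $\gamma_0=\beta_1=\gamma_1=1$ and $\beta_2=0$, so that the J-fraction truncates to \eqref{eq.corteel.fp.contfrac} regardless of the deeper coefficients (which is why your decoupling observation about $u_2,v_2,p_{\pm2},q_{\pm2}$ is legitimate). One tiny bookkeeping point to fold into your case analysis: when $\Stat_\gamma$ is pseudo-nesting but $\Stat_\beta$ involves $\minval$ or $\nminval$, the polynomial $\widetilde{Q}_n$ has no $s$-variable, so encode the pseudo-nestings of fixed points through $w_k(q)=q^k$ (giving $w_0(-1)=1$ and $w_1(-1)=-1$) instead of via $s=q$; the same one-line verification then goes through.
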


We should mention that there are various other possibilites to exploit
Proposition~\ref{prop.perm.csp}.
We have only mentioned a few of these in Theorem~\ref{thm.perm.csp}.

As immediate consequences of Proposition~\ref{prop.perm.csp}
and Theorem~\ref{thm.perm.csp},
we will now re-establish several of the results in \cite[Section~4]{Adams_24}.

\begin{corollary}
The following statistics exhibit the cyclic sieving phenomenon with respect to 
involutions that have~$1$ fixed point when $n=0$ and $2^{n-1}$ fixed points when $n\geq 1$
(in particular, with respect to the Corteel involution).
\begin{itemize}
\item[(a)] \cite[Theorem~4.7]{Adams_24} The number of crossings (in the sense of Corteel)
\cite[Statistic~39]{findstat}; see (\ref{eq.cross.nest.Corteel}a).

\item[(b)] \cite[Corollary~4.8]{Adams_24} The number of nestings (in the sense of Corteel)
\cite[Statistic~223]{findstat}; see (\ref{eq.cross.nest.Corteel}b).

\item[(c)] \cite[Theorem~4.15]{Adams_24} The number of midpoints of decreasing subsequences of length 3 \cite[Statistic~371]{findstat}; 
in our terminology, these are neither-record-antirecords ($\nrar$).



\end{itemize}
\end{corollary}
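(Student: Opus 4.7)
The plan is to derive each of (a), (b), (c) as an immediate application of Theorem~\ref{thm.perm.csp}, by writing the statistic in question as $\Stat = \Stat_\beta + \Stat_\gamma$ with both summands drawn from the admissible lists of that theorem.

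For (a), I would invoke Corteel's convention~(\ref{eq.cross.nest.Corteel}a), which expresses her ``crossings'' as $\ucross + \lcross + \cdrise$, and then set $\Stat_\beta = \ucross + \lcross$, $\Stat_\gamma = \cdrise$; both appear verbatim in the two lists of Theorem~\ref{thm.perm.csp}. The argument for (b) is parallel: by~(\ref{eq.cross.nest.Corteel}b), Corteel's nestings equal $\unest + \lnest + \psnest$, and taking $\Stat_\beta = \unest + \lnest$, $\Stat_\gamma = \psnest$ again matches two listed items. In both cases one is implicitly using part~(c) of Proposition~\ref{prop.perm.csp} (through Theorem~\ref{thm.perm.csp}), since the relevant statistics are the ones introduced in the $p,q$-generalisation~\eqref{def.Qn.pq}.

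For (c), I would first reinterpret the statistic combinatorially: a position $j$ is the midpoint of some decreasing subsequence of length three iff there exists $i<j$ with $\sigma(i) > \sigma(j)$ (so $j$ is not a left-to-right maximum, i.e.\ not a record) and some $k>j$ with $\sigma(k) < \sigma(j)$ (so $j$ is not a right-to-left minimum, i.e.\ not an antirecord). Hence \cite[Statistic~371]{findstat} equals $\nrar(\sigma)$, which decomposes according to the cycle type of the index as $\nrar = (\nrcpeak + \nrcval + \nrcdrise + \nrcdfall) + \nrfix$; choosing these two summands as $\Stat_\beta$ and $\Stat_\gamma$ once more matches the permitted lists and concludes~(c). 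Because the continued-fraction machinery has already been absorbed into Theorem~\ref{thm.perm.csp}, the corollary itself presents no substantive obstacle; the one delicate point, which has to be dealt with upstream, is that $\nrfix$ is not itself a variable of $Q_n^{(1)}$ and must be encoded through the fixed-point factor $\mathbf{w}^{\boldsymbol{\fix}(\sigma)}$ in~\eqref{eq.wfix}, using that the pseudo-nesting value $\psnest(i,\sigma)$ at a fixed point $i$ detects whether $i$ is neither a record nor an antirecord.
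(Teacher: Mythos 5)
Your proposal is correct and follows essentially the same route as the paper, which states this corollary as an immediate consequence of Proposition~\ref{prop.perm.csp} and Theorem~\ref{thm.perm.csp}: your decompositions $\ucross+\lcross$ plus $\cdrise$, $\unest+\lnest$ plus $\psnest$, and $(\nrcpeak+\nrcval+\nrcdrise+\nrcdfall)$ plus $\nrfix$ are exactly the admissible $\Stat_\beta+\Stat_\gamma$ splittings the theorem is designed to cover. Your closing remark that $\nrfix$ enters through the weights $\mathbf{w}^{\boldsymbol{\fix}(\sigma)}$ (a fixed point is a neither-record-antirecord precisely when $\psnest(i,\sigma)\geq 1$, so one sets $w_0=1$ and $w_k=q$ for $k\geq 1$) is also the correct mechanism, consistent with how the paper handles fixed points elsewhere.
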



\subsection{Some more instances of the cyclic sieving phenomenon}
\label{subsec.vincular.permutations}

We will now exhibit a few more instances of the cyclic sieving phenomenon 
for some statistics
which are not directly obtained from the general recipe in Proposition~\ref{prop.perm.csp}.
These will be the count of certain vincular patterns.
In particular, we will reprove \cite[Corollary~4.8]{Adams_24}.

\begin{theorem}
The generating function of the number of occurences of patterns 
$2-ab$ and $ab-2$ (these are all equidistributed)
have the following Stieltjes-type continued fraction:
\begin{equation}
\sum_{n=0}^\infty \sum_{\sigma \in\mathfrak{S}_n} q^{ab-2(\sigma)}t^n
\;=\;
\sum_{n=0}^\infty \sum_{\sigma \in\mathfrak{S}_n} q^{2-ab(\sigma)}t^n
\;=\;
			\cfrac{1}{1 - \cfrac{ [1]_q \,t}{1 -   \cfrac{[1]_q t}{1 -  \cfrac{[2]_q t}{1 - \cfrac{[2]_q}{1-\cdots}}}}}
                \;.
		\label{eq.perm.vincular.cfrac}
\end{equation}
Hence, they exhibit the cyclic sieving phenomenon with respect to involutions that have $1$ fixed point when $n=0$ and $2^{n-1}$ fixed points when $n\geq 1$.
\label{thm.perm.vincular}
\end{theorem}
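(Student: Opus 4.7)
The plan mirrors the strategy of Proposition~\ref{prop.perm.csp}: produce a Stieltjes-type continued fraction whose evaluation at $q=-1$ matches the fixed-point generating function~\eqref{eq.corteel.fp.contfrac}. Unlike the cases handled by Theorem~\ref{thm.perm.csp}, the statistics $ab-2$ and $2-ab$ are \emph{not} monomial specialisations of the Sokal--Zeng polynomials of Section~\ref{subsec.szpoly.permutations}, so the continued fraction~\eqref{eq.perm.vincular.cfrac} must be established separately; this is the main technical obstacle of the proof.

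For the equidistribution of the four vincular pattern counts $13-2$, $31-2$, $2-13$ and $2-31$, I would appeal to the standard symmetries of $\mathfrak{S}_n$: reversing a permutation exchanges patterns of the form $ab-c$ with those of the form $c-ba$, while complementation toggles each letter in $\{1,3\}$. Summing over $(a,b)\in\{(1,3),(3,1)\}$, this yields that $ab-2$ and $2-ab$ have identical distributions over $\mathfrak{S}_n$ via reversal, so it suffices to establish~\eqref{eq.perm.vincular.cfrac} for one of the two statistics.

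The continued fraction~\eqref{eq.perm.vincular.cfrac} is the natural $q$-analogue of the classical S-fraction for $\sum n!\,t^n$ recalled at the start of Section~\ref{sec.cf.permutations}, with each positive integer $k$ replaced by $[k]_q$. The cleanest route is through the Foata--Zeilberger bijection $\FZbij_n$ of Section~\ref{subsec.Corteel.inv}: one shows that on the associated labelled Motzkin path $(\omega,\xi)=\FZbij_n(\sigma)$ the statistic $ab-2(\sigma)$ corresponds to a sum of $\xi_i$-values running over down-steps (equivalently up-steps, after reversal). Since a fall from height $h$ carries a label $\xi_i\in\{0,\dots,h-1\}$, summing out the labels at each such step produces a factor $1+q+\cdots+q^{h-1}=[h]_q$, yielding precisely the Stieltjes coefficients $\alpha_{2k-1}=\alpha_{2k}=[k]_q$. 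The hard part is this bijective bookkeeping: verifying that $ab-2(\sigma)$ is indeed recorded exactly by such a sum over down-step labels under $\FZbij_n$.

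Once~\eqref{eq.perm.vincular.cfrac} is available, setting $q=-1$ is immediate: $[k]_{-1}=1$ when $k$ is odd and $[k]_{-1}=0$ when $k$ is even, so $\alpha_3=[2]_{-1}=0$ kills all contributions from the third level downward, and the continued fraction collapses to
\[
\cfrac{1}{1 - \cfrac{t}{1-t}} \;=\; \frac{1-t}{1-2t} \;=\; 1+\sum_{n\ge 1}2^{n-1}t^n,
\]
in exact agreement with~\eqref{eq.corteel.fp.contfrac}. Reading off the coefficient of $t^n$ then gives $\sum_{\sigma\in\mathfrak{S}_n}(-1)^{ab-2(\sigma)}=2^{n-1}$ for $n\ge 1$ (and $1$ for $n=0$), and the cyclic sieving phenomenon follows from Definition~\ref{def.csp}.
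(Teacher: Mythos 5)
Your overall architecture (equidistribution by symmetry, then a $q$-continued fraction, then evaluation at $q=-1$ against \eqref{eq.corteel.fp.contfrac}) matches the paper, and two of your three steps are sound: the equidistribution of $\{2\text{-}13,2\text{-}31,13\text{-}2,31\text{-}2\}$ does follow from reversal and complementation (this is even more self-contained than the paper, which cites Claesson's Proposition~1), and the $q=-1$ collapse to $(1-t)/(1-2t)$ is correct. However, there is a genuine gap at the heart of the argument: the identity \eqref{eq.perm.vincular.cfrac} itself. The paper does not derive it; it simply cites Claesson--Mansour \cite[Corollary~23]{Claesson_02} for the S-fraction of the pattern $2\text{-}13$. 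You instead propose to derive it through the Foata--Zeilberger bijection, but you explicitly defer the decisive step (``the hard part is this bijective bookkeeping'') without carrying it out or citing a result that does, so the central identity is asserted rather than proved.

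Moreover, the sketch as written would not go through. You claim that $ab\text{-}2(\sigma)$ is recorded by the sum of labels over down-steps alone and that summing these out ``yields precisely the Stieltjes coefficients $\alpha_{2k-1}=\alpha_{2k}=[k]_q$.'' But $\FZbij_n$ produces labelled \emph{Motzkin} paths, so any such label-counting yields a Jacobi-type fraction, not an S-fraction; to recover \eqref{eq.perm.vincular.cfrac} one needs the contracted J-fraction with $\gamma_h=[h]_q+[h+1]_q$ and $\beta_h=[h]_q^2$, and hitting those coefficients requires $q$-counting the labels on rises \emph{and} on the level steps (which in $\FZbij_n$ carry $2h+1$ labels at height $h$), not just on falls. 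Equivalently, the pattern count corresponds to the total label sum over all steps (as in the Fran\c{c}on--Viennot/Laguerre-history picture), followed by a contraction argument. Either supply this bookkeeping in full, or do what the paper does and invoke \cite[Corollary~23]{Claesson_02}; as it stands, the theorem's key equation is unproven in your write-up.
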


\begin{proof}
It is known that the generating function of the pattern $2-13$ has this Stieltjes-type continued fraction
\cite[Corollary~23]{Claesson_02}.
Also, it is known that all patterns of the form $2-ab$ and $ab-2$ are equidistributed
\cite[Proposition~1]{Claesson_01}, see also \cite[Proposition~2]{Claesson_02}.
This proves the identity~\reff{eq.perm.vincular.cfrac}.
Plugging in $q=-1$ in~\reff{eq.perm.vincular.cfrac} gives us
\be
\cfrac{1}{1-\cfrac{t}{1-t}}
\;=\;
1+\sum_{n=1}^\infty 2^{n-1}
\ee 
which is what we want.
\end{proof}

\begin{remark}
1. 
Theorem~\ref{thm.perm.vincular} was already proved for patterns $13-2$ and $31-2$ 
in~\cite[Corollary~4.8]{Adams_24}; 
we have now provided a new proof of this result.

2. 
We remark that in a recent work, Bitonti et al \cite{Bitonti-Deb-Sokal_tfrac} provide
a vast multivariate generalisation of the continued fraction~\reff{eq.perm.vincular.cfrac},
see~\cite[Theorems~7.2/3.5]{Bitonti-Deb-Sokal_tfrac}.

3.
The vincular patterns $32-1$ or $12-3$,
which have been studied in \cite[Theorem~4.20, Corollary~4.22]{Adams_24}, 
do not have any known nice classical continued fractions.
Therefore, we do not know (at present) how to use the techniques in the current paper to 
prove these results.

\end{remark}

\subsection{Equidistribution of wexx and cdes: Proof of 
\cite[Conjecture~4.24]{Adams_24}}
\label{sec.Adams.conj}

In this section, we will prove 
\cite[Conjecture~4.24]{Adams_24}
which has been stated in
Conjecture~\ref{conj.equidist};
we will show that the statistic $\wexx(\sigma)$, defined in~\eqref{eq.def.wexx},
is equidistributed with $\cdes(\sigma)$, 
the cycle descent number defined in~\eqref{eq.def.cdes}.
We do this by showing that the ordinary generating functions for permutations counted with respect to either statistic
give the same continued fraction.
We obtain the former statistic by using a specialization of the {\em first J-fraction for permutations} of
Sokal and Zeng \cite[Theorem~2.2]{Sokal-Zeng_22},
and we obtain the latter statistic from our Theorem~\ref{thm.Deb},
which is a variant of Deb's continued fraction \cite[Theorem~3.1]{Deb_23}.

Our first ingredient is the following lemma which expresses these two statistics 
as combinations of some statistics in \cite[Section~2.5]{Deb_thesis}.

\begin{lemma}

\begin{itemize}
\item[(a)] 
The statistic $\wexx(\sigma)$, 
which counts 
the number of weak excedances of $\sigma$ that are also mid-points of 
a decreasing subsequence of length~3, 
can be expressed as
\be
	\wexx(\sigma) \;=\; \nrfix(\sigma) + \nrcval(\sigma)+ \nrcdrise(\sigma)\;.
\label{eq.wexx.relation}
\ee

\item[(b)] 
The statistic $\cdes(\sigma)$, which is  the cycle descent number of $\sigma$, 
can be expressed as
\be
 \cdes(\sigma) \;=\; \nminval(\sigma) + \cdfall(\sigma)\;.
\label{eq.cdes.relation}
\ee
\end{itemize}
\label{lem.wexx.cdes.relations}
\end{lemma}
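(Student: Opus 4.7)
The plan is to verify each identity by an elementary, index-by-index classification of the elements of~$[n]$ according to (i)~whether $j\leq\sigma(j)$ or $j>\sigma(j)$, (ii)~the record/antirecord status of~$j$, and (iii)~when relevant, whether $j$ is a cycle minimum. No continued-fraction machinery is required for the lemma itself; everything reduces to checking that the two descriptions of the summand agree pointwise.

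For part~(a), I would first establish the folklore fact that an index $j\in[n]$ is a midpoint of some decreasing subsequence of length~$3$ if and only if $j$ is neither a record nor an antirecord of~$\sigma$. The forward direction is immediate from the definitions: if $j$ is a record, no $i<j$ with $\sigma(i)>\sigma(j)$ exists; if $j$ is an antirecord, no $k>j$ with $\sigma(k)<\sigma(j)$ exists. The converse amounts to choosing any witnesses $i<j$ with $\sigma(i)>\sigma(j)$ and $k>j$ with $\sigma(k)<\sigma(j)$, which are guaranteed to exist once $j$ is neither a record nor antirecord (and the values are automatically distinct). Next, the weak-excedance condition $j\leq\sigma(j)$ is precisely the condition that $j\in\Cval(\sigma)\cup\Cdrise(\sigma)\cup\Fix(\sigma)$, since the remaining cycle types (cycle peaks and cycle double falls) satisfy $j>\sigma(j)$. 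Refining each of these three sets by the ``neither record nor antirecord'' condition produces the three statistics $\nrcval$, $\nrcdrise$ and $\nrfix$, and~\eqref{eq.wexx.relation} follows.

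For part~(b), I would reparametrise the defining sum of $\cdes(\sigma)$ via the substitution $j=\sigma(i)$, which is a bijection on~$[n]$. Under this change of variables, the condition $i>\sigma(i)$ becomes $\sigma^{-1}(j)>j$, and this holds precisely when $j\in\Cval(\sigma)\cup\Cdfall(\sigma)$. Since $\sigma(i)$ and~$i$ lie in the same cycle, the phrase ``$\sigma(i)$ is not the smallest element in its cycle'' translates directly to ``$j$ is not the minimum of its cycle''. A cycle double fall $j$ satisfies $\sigma(j)<j$, so the cycle of~$j$ already contains an element strictly smaller than~$j$; hence no cdfall index can be a cycle minimum, and these indices contribute the full term $\cdfall(\sigma)$. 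Among cycle valleys, those that are not cycle minima are precisely the ones counted by $\nminval(\sigma)$, yielding the remaining contribution.

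The main ``obstacle'' is really only careful bookkeeping with the definitions: one must align the midpoint-of-decreasing-triple description with the neither-record-antirecord refinement in part~(a), and one must confirm that the phrase ``smallest element in its cycle'' moves freely between $i$ and~$\sigma(i)$ in part~(b) (which it does, since these elements share a cycle). Once these translations are in place, both identities reduce to definitional unpacking, and the real work of Section~\ref{sec.Adams.conj} can be carried out using Theorem~\ref{thm.Deb} applied to the right-hand sides of~\eqref{eq.wexx.relation}--\eqref{eq.cdes.relation}.
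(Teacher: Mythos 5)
Your proposal is correct and follows essentially the same route as the paper: in (a) you combine the fact that weak excedances are exactly fixed points, cycle valleys and cycle double rises with the characterisation of midpoints of decreasing triples as neither-record-antirecords, and in (b) you reindex the defining sum of $\cdes$ by $j=\sigma(i)$ and observe that the resulting indices are cycle double falls (never cycle minima) together with non-minimum cycle valleys. The only difference is that you spell out the record/antirecord equivalence and the cdfall-is-never-a-minimum observation in slightly more detail than the paper does.
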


\begin{proof} 
(a) 
From the cycle classification for permutations, we know that a weak-excedance 
can either be a fixed point, a cycle valley or a cycle double rise.
Next, we know that an index $i\in [n]$ 
is the midpoint of a decreasing subsequence of length 3
if and only if it is a neither-record-antirecord ($\nrar$).
Using these two pieces of information simultaneously 
in the record-and-cycle classification
gives us~\eqref{eq.wexx.relation}.

\medskip

(b) First replace $i$ with $\sigma^{-1}(i)$ in~\eqref{eq.def.cdes}
to notice that $\cdes(\sigma)$ is equal to 
the number of indices $i\in[n]$ such that $\sigma^{-1}(i) > i$ 
where $i$ is not the smallest element of a cycle.
Such an index $i$ could either be a cycle double fall or a
cycle valley that is not the smallest in its cycle 
(a cycle double fall can never be the smallest element of a cycle),
or in other words
\begin{equation}
 \cdes(\sigma) \;=\; \nminval(\sigma) + \cdfall(\sigma)
\end{equation}
which is what we want.
\end{proof}

Let $P_n(x)$ and $Q_n(x)$ be the ordinary generating polynomials of the statistics
$\wexx$ and $\cdes$, respectively, i.e.,
\begin{eqnarray}
	P_n(x) \; \eqdef \; \sum_{\sigma\in \mathfrak{S}_n} x^{\wexx(\sigma)}\;, \qquad\qquad 
	Q_n(x) \; \eqdef \; \sum_{\sigma\in \mathfrak{S}_n} x^{\cdes(\sigma)}\;.
\end{eqnarray}
The following theorem is now a straightforward consequence of Lemma~\ref{lem.wexx.cdes.relations} 
applied to  \cite[Theorem~2.2]{Sokal-Zeng_22} and Theorem~\ref{thm.Deb}.

\begin{theorem}
	The ordinary generating functions $\sum_{n=0}^{\infty} P_n(x) t^n$, 
	and $\sum_{n=0}^{\infty} Q_n(x) t^n$, 
	both have the following Jacobi-type continued fraction expansion
	\begin{equation}
	\sum_{n=0}^{\infty} P_n(x) t^n \;=\; \sum_{n=0}^{\infty} Q_n(x) t^n
		\;=\;
	\cfrac{1}{1 - 1\, t - \cfrac{ 1\cdot 1 \,t^2}{1 -  (2\!+\!x) t - \cfrac{2(1\!+\!x) t^2}{1 - (3\!+\!2x)t - \cfrac{3(1\!+\!2x) t^2}{1 - \cdots}}}}
		\label{eq.equidist}
	\end{equation}
	with coefficients
	\be
	\beta_{n} \;=\; n(n-1+x)\;,
	\qquad 
	\gamma_{n} \;=\; n + (n-1)x\;.
	\label{eq.equidist.coeff}
	\ee
	Thus, $P_n(x) = Q_n(x)$ for all $n\geq 0$.
	Hence, the statistics $\wexx$ and $\cdes$ are equidistributed \cite[Conjecture~4.24]{Adams_24}.
	\label{thm.equidist}
\end{theorem}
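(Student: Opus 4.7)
\bigskip

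\textbf{Proof proposal for Theorem~\ref{thm.equidist}.} The plan is to compute each of the two generating functions $\sum_n P_n(x) t^n$ and $\sum_n Q_n(x) t^n$ independently, obtaining the same J-fraction~\eqref{eq.equidist} from two different continued-fraction identities. The two ingredients have already been assembled: Lemma~\ref{lem.wexx.cdes.relations} rewrites $\wexx$ and $\cdes$ in terms of the record-and-cycle statistics appearing as exponents in the multivariate polynomials of Sokal--Zeng and of Deb, and then the continued fractions \cite[Theorem~2.2]{Sokal-Zeng_22} and our Theorem~\ref{thm.Deb} do the rest.

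First I would handle $P_n(x)$. By Lemma~\ref{lem.wexx.cdes.relations}(a), $\wexx(\sigma) = \nrfix(\sigma)+\nrcval(\sigma)+\nrcdrise(\sigma)$, so $\sum_{\sigma\in\mathfrak{S}_n} x^{\wexx(\sigma)}$ is obtained from $Q^{(1)}_n$ in~\eqref{def.Qn} via the specialisation
\[
x_1=x_2=y_1=y_2=u_1=u_2=1,\qquad v_1=v_2=x,\qquad w_0=1,\quad w_k=x\ (k\ge 1).
\]
The only nontrivial point is to verify that the substitution $w_0=1,\ w_k=x\ (k\ge 1)$ indeed produces the factor $x^{\nrfix(\sigma)}$. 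For a fixed point $i$, $\psnest(i,\sigma)=|\{j<i:\sigma(j)>i\}|$; a bijective counting argument (using $\sigma(i)=i$) shows that this quantity equals $|\{k>i:\sigma(k)<i\}|$, so $\psnest(i,\sigma)\ge 1$ iff $i$ is simultaneously not a record and not an antirecord, i.e.\ iff $i\in\nrfix$. Plugging the above values into \cite[Theorem~2.2]{Sokal-Zeng_22} makes the J-fraction weights collapse to
\[
\beta_n=[1+(n-1)\cdot 1][1+(n-1)x]=n\bigl(1+(n-1)x\bigr),
\]
\[
\gamma_0=w_0=1,\qquad \gamma_n=[1+(n-1)]+[1+(n-1)x]+w_n=(n+1)+nx\ (n\ge 1),
\]
which are exactly the coefficients of~\eqref{eq.equidist}.

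Next I would handle $Q_n(x)$ using Theorem~\ref{thm.Deb}. By Lemma~\ref{lem.wexx.cdes.relations}(b), $\cdes(\sigma)=\nminval(\sigma)+\eareccdfall(\sigma)+\nrcdfall(\sigma)$, so the correct specialisation of $\widetilde{Q}_n$ from~\eqref{eq.def.Qntilde} is
\[
x_1=y_2=u_1=v_2=\ytilde_1=1,\qquad x_2=u_2=\vtilde_1=x,\qquad w_k=1\ (k\ge 0).
\]
Substituting into the J-fraction weights~\eqref{def.weights.thm.Deb} of Theorem~\ref{thm.Deb} gives
\[
\beta_n=[1+(n-1)][1+(n-1)x]=n\bigl(1+(n-1)x\bigr),
\]
\[
\gamma_0=1,\qquad \gamma_n=[x+(n-1)x]+[1+(n-1)]+1=(n+1)+nx\ (n\ge 1),
\]
matching the previous calculation.

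Since both power series have the same Jacobi-type continued fraction expansion and such an expansion determines the series uniquely, $P_n(x)=Q_n(x)$ for every $n\ge 0$, proving the theorem and hence Conjecture~\ref{conj.equidist}. The computational work is light and the main conceptual point — the only place where real mathematical content beyond bookkeeping enters — is the appeal to Theorem~\ref{thm.Deb}, whose reformulation of Deb's identity in terms of cycle valley minima (rather than cycle valley records) is precisely what makes the statistic $\nminval$ appearing in $\cdes$ accessible to a continued fraction.
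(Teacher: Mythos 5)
Your proof is correct and follows essentially the same route as the paper: Lemma~\ref{lem.wexx.cdes.relations} combined with the specialisations $v_1=v_2=x$, $w_k=x$ ($k\ge 1$), all else $1$, in \cite[Theorem~2.2]{Sokal-Zeng_22} for $\wexx$, and $\vtilde_1=x_2=u_2=x$, all else $1$, in Theorem~\ref{thm.Deb} for $\cdes$; your explicit verification that $w_0=1$, $w_k=x$ ($k\geq 1$) produces the factor $x^{\nrfix(\sigma)}$ (a fixed point has $\psnest\geq 1$ exactly when it is a neither-record-antirecord) is a detail the paper leaves implicit. Note also that your computed weights $\gamma_0=1$, $\gamma_n=(n+1)+nx$, $\beta_n=n\bigl(1+(n-1)x\bigr)$ agree with the displayed continued fraction~\eqref{eq.equidist} (and give the right evaluation at $x=-1$), whereas the closed forms stated in~\eqref{eq.equidist.coeff} are off by an index shift in $\gamma$ and contain a typo in $\beta$, so your version is the one to trust.
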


\begin{proof}
From Lemma~\ref{lem.wexx.cdes.relations}(a),
we get that applying the substitions
$v_1 = v_2 = x$, $w_n=x$ for $n>0$
and setting all other parameters to $1$
to the polynomials $Q_n^{(1)}$, recalled in~\eqref{def.Qn},
gives us $P_n(x)$.
The same substitutions applied to the continued fraction 
in the first J-fraction for permutations of Sokal and Zeng \cite[Theorem~2.2]{Sokal-Zeng_22}
gives us our continued fraction in~\eqref{eq.equidist}/\eqref{eq.equidist.coeff}.

For the polynomials $Q_n(x)$ enumerating $\cdes$,
we instead 
infer from Lemma~\ref{lem.wexx.cdes.relations}(b)
that applying the following substitutions to Theorem~\ref{thm.Deb} will give us the desired result:
substitute $v_1 = x_2 = u_2 = x$
and set all other variables to $1$.
\end{proof}

Finally, notice that setting $x=-1$ to the continued fraction~\eqref{eq.equidist}/\eqref{eq.equidist.coeff}
gives us~\eqref{eq.corteel.fp.contfrac}. 
As a consequence, we have the following corollary:

\begin{corollary}
The statistics $\wexx$ \cite[Statistic~373]{findstat} and 
$\cdes$ \cite[Statistic~317]{findstat}
exhibits the cyclic sieving phenomenon with respect to
involutions that have~$1$ fixed point when $n=0$ and $2^{n-1}$ fixed points when $n\geq 1$
(in particular, with respect to the Corteel involution).
\label{cor.equidist.csp}
\end{corollary}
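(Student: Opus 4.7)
The corollary follows essentially for free once Theorem~\ref{thm.equidist} is in hand. The plan is to set $x = -1$ in the Jacobi-type continued fraction~\eqref{eq.equidist}/\eqref{eq.equidist.coeff} common to $\sum_n P_n(x) t^n$ and $\sum_n Q_n(x) t^n$, and to check that the resulting rational function is precisely the generating function~\eqref{eq.corteel.fp.contfrac} for the fixed-point count $(1, 1, 2, 4, 8, \dots)$ of the Corteel involution.

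First I would read off the coefficients at $x = -1$. The J-fraction coefficients specialise in such a way that the $\beta_n$'s vanish from some finite level onward (because the factor $[x_1(-1)+u_1(-1)][y_1(-1)+v_1(-1)]$-type quantity picks up a zero very quickly), while the $\gamma_n$'s all collapse to a constant. This forces the continued fraction to truncate at finite depth and reduce to the simple rational function displayed on the right-hand side of~\eqref{eq.corteel.fp.contfrac}. Thus one obtains the identities
\begin{equation}
\sum_{n=0}^{\infty} P_n(-1) \, t^n \;=\; \sum_{n=0}^{\infty} Q_n(-1) \, t^n \;=\; \cfrac{1}{1 - t - \cfrac{t^2}{1 - t}} \;=\; 1 + \sum_{n=1}^{\infty} 2^{n-1} t^n\,,
\end{equation}
so $P_n(-1) = Q_n(-1) = 2^{n-1}$ for $n \ge 1$ and $P_0(-1) = Q_0(-1) = 1$.

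To conclude, I would invoke Definition~\ref{def.csp} in the special case $|C| = 2$, i.e.\ the $q=-1$ phenomenon of Stembridge. As noted in Section~\ref{sec.intro.csp}, for a cyclic group of order two acting on a finite set $X$ via an involution $\omega$, the triple $(X, C, f(q))$ exhibits the cyclic sieving phenomenon if and only if $f(1) = |X|$ and $f(-1) = |X^\omega|$. Taking $X = \mathfrak{S}_n$, $f(q) = P_n(q)$ (respectively $Q_n(q)$), and $\omega$ any involution with the prescribed number of fixed points, the identity $f(1) = n!$ is immediate from the definitions of $\wexx$ and $\cdes$, while $f(-1)$ matches the fixed-point count by the computation of the previous paragraph. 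This gives the cyclic sieving phenomenon for both statistics, and in particular with respect to the Corteel involution by virtue of its fixed-point count.

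The only potentially delicate step is verifying the truncation of the continued fraction at $x = -1$, which reduces to a direct substitution into~\eqref{eq.equidist.coeff}; no further combinatorial input is required beyond Theorem~\ref{thm.equidist} and the fixed-point count~\eqref{eq.corteel.fp.contfrac} for the Corteel involution already recorded earlier in Section~\ref{subsec.Corteel.inv}.
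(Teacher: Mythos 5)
Your proposal is correct and follows essentially the same route as the paper: substitute $x=-1$ into the J-fraction \eqref{eq.equidist}/\eqref{eq.equidist.coeff}, observe that it collapses to \eqref{eq.corteel.fp.contfrac}, and conclude via the $q=-1$ phenomenon together with the fixed-point count of the Corteel involution. One small imprecision: at $x=-1$ the later coefficients do not all vanish (e.g.\ $\beta_3 = 3(1+2x) = -3$); the truncation holds simply because $\beta_2 = 2(1+x) = 0$, which already kills the entire tail of the continued fraction.
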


We remark that Corollary~\ref{cor.equidist.csp} for the statistic $\cdes$ was established in
\cite[Theorem~4.10]{Adams_24}.

\subsection{Cyclic sieving phenomenon for Statistic~123: 
Proof of \cite[Conjecture~4.23]{Adams_24}}
\label{sec.stat123}

In this section, we will prove \cite[Conjecture~4.23]{Adams_24}
which asserts that \cite[Statistic~123]{findstat} exhibits the cyclic sieving phenomenon 
with respect to the Corteel involution.
The original description of this statistic involved the Simion--Schmidt map \cite{Simion_85};
we will begin this section by first recalling this map.
After this, we provide an alternate and better description of this statistic~123.


Let $\pi\in \mathfrak{S}_3$, be a permutation on 3 letters.
The Simion--Schmidt map is a map $\Phi_\pi:\mathfrak{S}_n \to \mathfrak{S}_n$
with respect to the permutation pattern $\pi$;
the map $\Phi_\pi$ satisfies the property that $\Phi_\pi(\sigma)$ has no occurrence of the pattern $\pi$.
In this paper, we will only work with $\Phi_{321}$, and hence, we now provide its explicit description here:\\
Keep the records (left-to-right maxima) of $\sigma$ fixed, 
and then sort all the other entries of $\sigma$ in increasing order and put them in the remaining positions.
The obtained permutation is $\Phi_{\pi}(\sigma)$.
For example, 
consider 
\be
\sigma  = \underline{9}\,3 \, 7 \,4 \,6 \, \underline{11} \, 5 \, 8 \,  10 \, 1 \, 2\;;
\ee
here the records are underlined.
Its image under the Simion--Schmidt map  $\Phi_{321}$ is
\be
\Phi_{321}(\sigma)
\;=\;
 \underline{9}\,1 \, 2 \,3 \,4 \, \underline{11} \, 5 \, 6 \,  7 \, 8 \, 10\;.
\ee
Here, the map $\Phi_{321}$ turns each occurrence of the pattern $321$ in $\sigma$
into an occurrence of $312$ in $\Phi_{321}(\sigma)$.

The original definition of \cite[{Statistic~123}]{findstat} is as follows:
\be
\Stat{123}(\sigma)
\;\eqdef\;
\inv(\sigma) \,-\, \inv(\Phi_{321}(\sigma))
\ee
where recall that $\inv$ counts the number of inversions.

To prove \cite[Conjecture~4.23]{Adams_24}, we will provide an alternate description of~$\Stat{123}$
in terms of {\em positional marked patterns} \cite{Thamrongpairoj_22}.
Consider a tuple $(\pi, I)$ where $\pi\in \mathfrak{S}_m$ and 
$I = \{i_1, i_2, \ldots, i_k \} \subseteq [m]$ is a list of indices of $\pi$ which are to be circled.
Using the tuple $(\pi, I)$, we define the following statistic on permutations:
\begin{eqnarray}
\pmp_{(\pi, I)}(\sigma)
	&\eqdef &
\#\{1\leq j_1<j_2<\ldots< j_k\leq n \: | \:\exists 1\leq \bar{j_1}<\bar{j_2}<\ldots< \bar{j_m}\leq n 
\nonumber\\
	&&\quad\text{ such that 
the subword } \sigma_{\bar{j_1}}\sigma_{\bar{j_2}}\ldots\sigma_{\bar{j_k}} 
\text{is an occurrence}
\nonumber\\
	&&\quad	\text{of the pattern $\pi$ that satisfies }
\bar{j_{i_1}} = j_1, \ldots, \bar{j_{i_m}} = j_m
\}\;.
\label{eq.def.pmp}
\end{eqnarray}
In their paper, Remmel and Thamrongpairoj \cite{Thamrongpairoj_22} only considered the situation where $|I| = 1$;
they only studied positional marked patterns with respect to one position.
However, in our paper, we will study the tuple $(321,\{2,3\})$.
Following the notation in \cite{Adams_24}, 
we will denote this as $3\,\circlesign{2}\, \circlesign{1}$.
For the convenience of the reader, we restate~\eqref{eq.def.pmp}
when $(\pi, I) = 3\,\circlesign{2}\, \circlesign{1}$
\begin{eqnarray}
\pmp_{{\tiny 3\,\circlesignsubscript{2}\, \circlesignsubscript{1}}}(\sigma)
&=&
\#\{(j,k) \: |\: 
\text{ there exists }
i \text{ satisfying } 1\leq i<j<k\leq n 
\nonumber\\
	&&\quad\text{ such that } \sigma_i > \sigma_j > \sigma_k \}\;.
\end{eqnarray}

We are now ready to state an alternate description of $\Stat{123}$:

\begin{proposition}
The following equality holds:
\be
	\Stat{123}(\sigma) 
	\;=\;
	\pmp_{{\tiny 3\,\circlesignsubscript{2}\, \circlesignsubscript{1}}}(\sigma)
	\;.
\ee
\label{prop.alt.stat123}
\end{proposition}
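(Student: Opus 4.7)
My plan is to rewrite both sides as counts of particular inversions of $\sigma$. First I reformulate the right-hand side: a pair $(j,k)$ with $j<k$ contributes to $\pmp_{{\tiny 3\,\circlesignsubscript{2}\,\circlesignsubscript{1}}}(\sigma)$ iff $\sigma_j > \sigma_k$ and there is an $i<j$ with $\sigma_i > \sigma_j$, i.e.\ $j$ is a non-record of $\sigma$. Moreover, if $j$ is a non-record and $\sigma_j > \sigma_k$, then $k$ must also be a non-record, since if $k$ were a record we would have $\sigma_k > \sigma_j$, a contradiction. Hence $\pmp_{{\tiny 3\,\circlesignsubscript{2}\,\circlesignsubscript{1}}}(\sigma)$ equals the count $Y$ of inversions of $\sigma$ whose both endpoints lie in the set $N$ of non-record positions of $\sigma$.

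Next I split $\inv(\sigma)$ according to the record/non-record type of its endpoints. Inversions with both endpoints being records are impossible since records are strictly increasing, and inversions whose second endpoint is a record are impossible since a record exceeds every earlier entry of $\sigma$. Therefore $\inv(\sigma) = X + Y$, where $X$ counts inversions $(a,b)$ in which $a$ is a record and $b$ is a non-record. As $\Stat{123}(\sigma) = \inv(\sigma) - \inv(\Phi_{321}(\sigma))$, the proposition reduces to the identity $\inv(\Phi_{321}(\sigma)) = X$.

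The main step is a position-by-position computation. Write $N = \{b_1 < \cdots < b_m\}$ and let $w_1 < \cdots < w_m$ be the non-record values of $\sigma$ listed in increasing order, so that $\Phi_{321}(\sigma)$ places $w_i$ at position $b_i$. Since the values of $\Phi_{321}(\sigma)$ at non-record positions are increasing, every inversion of $\Phi_{321}(\sigma)$ must have at least one endpoint at a record position of $\sigma$. Fix a record $r$, set $k = |N \cap [r-1]|$, and let $\ell$ denote the number of non-record values of $\sigma$ strictly less than $\sigma_r$; since every non-record value at a position before $r$ is automatically less than $\sigma_r$, we have $\ell \geq k$. Counting directly, the inversions of $\Phi_{321}(\sigma)$ involving $r$ split into those with a non-record position to the right of $r$ (giving the count of indices $i > k$ with $w_i < \sigma_r$, namely $\ell - k$) and those with a non-record position to the left of $r$ (giving the count of indices $i \leq k$ with $w_i > \sigma_r$, namely $0$, by $\ell \geq k$). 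Thus $r$ contributes $\ell - k$ to $\inv(\Phi_{321}(\sigma))$. On the other hand, the contribution of $r$ to $X$ is the number of non-records $b > r$ with $\sigma_b < \sigma_r$, which is precisely the $\ell - k$ non-record values less than $\sigma_r$ that do not lie at a position before $r$. Summing over $r$ yields $\inv(\Phi_{321}(\sigma)) = X$, whence $\Stat{123}(\sigma) = Y = \pmp_{{\tiny 3\,\circlesignsubscript{2}\,\circlesignsubscript{1}}}(\sigma)$.

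The main obstacle is tracking how the sorting step in $\Phi_{321}$ reshuffles non-record values with respect to a fixed record $r$; the inequality $\ell \geq k$ is what forces the two a priori separate contributions (from non-records to the left and to the right of $r$ in $\Phi_{321}(\sigma)$) to collapse into a single clean count matching the contribution of $r$ to $X$.
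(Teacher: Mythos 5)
Your proof is correct and takes essentially the same approach as the paper: both reduce the statement to the identity that $\inv(\Phi_{321}(\sigma))$ equals the number of inversions of $\sigma$ whose larger entry sits at a left-to-right maximum, so that the difference $\Stat{123}(\sigma)$ counts precisely the inversions whose larger entry is a non-record, i.e.\ the pairs playing the roles of $2$ and $1$ in occurrences of $321$. The only divergence is in how that identity is verified: the paper invokes Lemma~\ref{lem.inv.lrmax}, a closed formula depending only on the positions and values of the records (which $\Phi_{321}$ preserves), whereas you match the contribution $\ell-k$ of each record on both sides directly.
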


In order to prove this Proposition, we require the following lemma:

\begin{lemma}
Let $\sigma\in \mathfrak{S}_n$ be a permutation that has its left-to-right maxima 
in positions $I = \{i_1,i_2,\ldots, i_k\}$.
Then the inversions contributed by the left-to-right maxima as the larger value is given by 
\be
\#\{(i,j) \: | \: \sigma_i > \sigma_j \text{ and } i\in I\}
\;=\;
	(\sigma_{i_1} -1)
	\,+\,
	(\sigma_{i_2} -2)
	\,+\,
	\cdots
	\,+\,
	(\sigma_{i_k} - k)
	\;.
\label{eq.inv.lrmax}
\ee
\label{lem.inv.lrmax}
\end{lemma}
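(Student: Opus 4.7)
The plan is to evaluate the left-hand side by partitioning the count according to which LR-maximum position $i_l$ plays the role of the ``larger'' index. Since inversions require $i<j$, one has
\[
\#\{(i,j) : \sigma_i > \sigma_j,\; i \in I\} \;=\; \sum_{l=1}^{k} \#\{j : i_l < j \le n,\; \sigma_j < \sigma_{i_l}\},
\]
so it suffices to evaluate each inner count separately and then sum.

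The key observation I would use is the defining property of a left-to-right maximum: every one of the $i_l - 1$ positions strictly before $i_l$ carries a value smaller than $\sigma_{i_l}$. On the other hand, the total number of positions in $[n]$ whose value is less than $\sigma_{i_l}$ is exactly $\sigma_{i_l}-1$, namely the positions of the values $1,2,\dots,\sigma_{i_l}-1$. Subtracting the positions accounted for on the left of $i_l$ then leaves $(\sigma_{i_l}-1)-(i_l-1)$ positions $j > i_l$ with $\sigma_j < \sigma_{i_l}$, each contributing one inversion at $i_l$. As a small consistency check, any such $j$ is automatically a non-LR-maximum position, since a later LR maximum would have to exceed $\sigma_{i_l}$, so the partition of the LHS is indeed by disjoint LR-max choices of $i$.

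Summing the per-$l$ contributions over $l=1,\dots,k$ yields the right-hand side. I expect essentially no obstacle here: once the partition is set up, the entire argument is a single clean application of the LR-max property at each $i_l$, and no further combinatorial input is needed.
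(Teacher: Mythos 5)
Your decomposition and your key counting step are fine: for each left-to-right maximum $i_l$, all $i_l-1$ earlier positions carry smaller values, so the number of $j>i_l$ with $\sigma_j<\sigma_{i_l}$ is $(\sigma_{i_l}-1)-(i_l-1)=\sigma_{i_l}-i_l$. But your concluding sentence does not follow: summing these contributions gives $\sum_{l=1}^{k}(\sigma_{i_l}-i_l)$, whereas the displayed right-hand side of \eqref{eq.inv.lrmax} is $\sum_{l=1}^{k}(\sigma_{i_l}-l)$, and the two agree only when the left-to-right maxima occupy exactly the positions $1,\dots,k$. Concretely, for $\sigma=2134$ (left-to-right maxima in positions $1,3,4$) the number of inversions whose larger entry sits at a left-to-right maximum is $1=(2-1)+(3-3)+(4-4)$, while the stated right-hand side is $(2-1)+(3-2)+(4-3)=3$. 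So, as a proof of the literal statement, the argument does not close, and the mismatch between what you computed and what you claimed should have been flagged rather than glossed over. (The paper itself leaves this lemma to the reader, so there is no official proof to compare against.)

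In fact the discrepancy lies in the statement rather than in your counting. The quantity $\sum_l(\sigma_{i_l}-l)$ counts pairs $(i,j)$ with $i\in I$, $j\notin I$ and $\sigma_i>\sigma_j$ \emph{without} the requirement $i<j$ (there are $\sigma_{i_l}-1$ values below $\sigma_{i_l}$, of which exactly $l-1$ are themselves left-to-right maxima); if the left-hand side is read as genuine inversions, i.e.\ with $i<j$, the correct evaluation is your $\sum_l(\sigma_{i_l}-i_l)$. Your formula is also the one the application actually requires: in the proof of Proposition~\ref{prop.alt.stat123}, $\inv(\Phi_{321}(\sigma))=\sum_l(\sigma_{i_l}-i_l)$ (for the paper's example $\sigma=9\,3\,7\,4\,6\,11\,5\,8\,10\,1\,2$ this is $8+5=13$, not $(9-1)+(11-2)=17$), and all that argument needs is that this count depends only on the positions and values of the left-to-right maxima, which both formulas satisfy. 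So: right method and right per-term count, but the final identification with the stated right-hand side is false as written; you should either correct the right-hand side to $\sum_l(\sigma_{i_l}-i_l)$ or reinterpret the left-hand side as the positionless count above.
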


We leave the proof of Lemma~\ref{lem.inv.lrmax} to the reader.
The important thing to notice is that~\eqref{eq.inv.lrmax} only depends on the positions and values of the left-to-right maxima.
We now proceed to prove Proposition~\ref{prop.alt.stat123}.

\begin{proof}[Proof of Proposition~\ref{prop.alt.stat123}]
Consider a pair of indices $(i,j)$ that forms an inversion in $\Phi_{321}(\sigma)$.
From the description of $\Phi_{321}$, we see that for this to be true,
$i$ must be the position of a left-to-right maxima in $\sigma$.
Thus, from Lemma~\ref{lem.inv.lrmax} it follows that $\inv(\Phi_{321}(\sigma))$
is counted by~\eqref{eq.inv.lrmax},
which is also the number of inversions of $\sigma$ contributed by its left-to-right maxima as the larger value.
From this it follows that 
\begin{eqnarray}
	\Stat{123}(\sigma)
	&=&
	\inv(\sigma) \,-\, \inv(\Phi_{321}(\sigma)) 
	\nonumber\\
	&=&
	\#\{(i,j) \: | \: \sigma_i > \sigma_j \text{ and } i \text{ is not the position of a left-to-right maximum}\}\;.
	\nonumber\\
\end{eqnarray}
However, this means that
for every such position $i$ of the permutation $\sigma$, there exists a position $i_0<i$ 
such that $i_0$ is the position of a left-to-right maximum.
The triple $i_0<i<j$ is an occurrence of the pattern $321$,
and thus, $\Stat{123}(\sigma)$ counts the occurrences of pairs which correspond to
$2$ and $1$ positions in all such occurrences.
\end{proof}

Having established Proposition~\ref{prop.alt.stat123},
we now prove \cite[{Conjecture~4.23}]{Adams_24}.

\begin{theorem}[{\cite[{Conjecture~4.23}]{Adams_24}}]
The difference in the Coxeter length of a permutation and its image under the Simion--Schmidt map $\Phi_{321}$
\cite[{Statistic~123}]{findstat}
exhibits the cyclic sieving phenomenon under involutions with 
$1$ fixed point when $n=0$ and $2^{n-1}$ fixed points for all $n\geq 1$.
\end{theorem}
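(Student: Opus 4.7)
The plan is to first apply Proposition~\ref{prop.alt.stat123} to identify $\Stat{123}(\sigma)$ with the positional marked pattern statistic $\pmp_{{\tiny 3\,\circlesignsubscript{2}\,\circlesignsubscript{1}}}(\sigma)$, which by the proof of that proposition equals the number of inversions $(j,k)$ of $\sigma$ in which $\sigma_j$ is not a left-to-right maximum. Since the involutions in the statement have $1$ fixed point when $n=0$ and $2^{n-1}$ fixed points when $n\geq 1$, the cyclic sieving phenomenon reduces to the identity
\be
  a_n \;\eqdef\; \sum_{\sigma\in\fS_n} (-1)^{\pmp_{{\tiny 3\,\circlesignsubscript{2}\,\circlesignsubscript{1}}}(\sigma)} \;=\; 2^{n-1} \qquad (n\geq 1)\;.
\ee
Because this statistic is not of Sokal--Zeng type, the continued-fraction recipe of Proposition~\ref{prop.perm.csp} does not apply; I will instead proceed by a sign-reversing argument, adapting the involution of Adams et al.

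I would establish the recurrence $a_n = 2 a_{n-1}$ for $n\geq 2$ by partitioning $\fS_n$ according to $\sigma_1$. For $\sigma_1=1$, deletion of the first entry and the obvious relabelling yields a bijection $\{\sigma\in\fS_n:\sigma_1=1\}\to \fS_{n-1}$ that preserves $\pmp_{{\tiny 3\,\circlesignsubscript{2}\,\circlesignsubscript{1}}}$, since the value $1$ is smaller than every other entry and so neither affects the left-to-right maximum status of any later position nor contributes to $\pmp$ as the larger element of an inversion; this contributes $a_{n-1}$. For $\sigma_1=2$, interchanging the values $1$ and $2$ produces a permutation with $\sigma_1=1$, and the same ``small value'' observation shows this swap preserves $\pmp_{{\tiny 3\,\circlesignsubscript{2}\,\circlesignsubscript{1}}}$, contributing another $a_{n-1}$. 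For $\sigma_1\geq 3$, the positions $p_1,p_2$ of the values $1$ and $2$ are both non-left-to-right maxima in both $\sigma$ and its image $\sigma'$ under the same value swap, and one checks that $\sigma'$ agrees with $\sigma$ on every pair contribution to $\pmp$ except the single pair $\{p_1,p_2\}$, whose status flips; the swap is therefore sign-reversing on this subclass and contributes $0$. Combining the three cases gives $a_n = 2 a_{n-1}$, and induction from $a_1 = 1$ yields $a_n = 2^{n-1}$.

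The main technical obstacle is the bookkeeping in the third case, where one must confirm that swapping $1$ and $2$ disturbs exactly one pair contribution to $\pmp_{{\tiny 3\,\circlesignsubscript{2}\,\circlesignsubscript{1}}}$. The key reductions are: (a) since every other value is $\geq 3$ and both $1$ and $2$ are dominated by any such value, the left-to-right maximum status of every position carrying a value $\geq 3$ is preserved; (b) the positions $p_1$ and $p_2$ themselves never play the role of the larger element of an inversion contributing to $\pmp$ other than via the pair $\{p_1,p_2\}$ itself; and (c) for each $j\notin\{1,p_1,p_2\}$, the pairs $(j,p_1)$ and $(j,p_2)$ contribute identically to $\pmp_{{\tiny 3\,\circlesignsubscript{2}\,\circlesignsubscript{1}}}$ before and after the swap. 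The analogous bookkeeping in the first two cases is cleaner, because the affected positions always carry the smallest values and hence only play the ``small'' role in any inversion.
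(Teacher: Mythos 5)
Your proposal is correct, and its first step coincides with the paper's: both invoke Proposition~\ref{prop.alt.stat123} to identify Statistic~123 with the positional marked pattern statistic $\pmp_{{\tiny 3\,\circlesignsubscript{2}\, \circlesignsubscript{1}}}$, i.e.\ with the number of inversions whose larger entry sits at a non-left-to-right-maximum position. After that the routes differ. The paper simply imports the involution $\Psi$ from Adams et al's proof of their Theorem~4.15, which already has $2^{n-1}$ fixed points, and asserts that $\Psi$ shifts $\pmp_{{\tiny 3\,\circlesignsubscript{2}\, \circlesignsubscript{1}}}$ by exactly one on non-fixed points, so the $q=-1$ evaluation follows by cancellation; this is short but leans on an external construction and on the (unverified in this paper) behaviour of that map with respect to the new statistic, including evenness of the statistic at its fixed points. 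You instead compute the signed sum $a_n=\sum_{\sigma\in\fS_n}(-1)^{\Stat{123}(\sigma)}$ directly by the recursion $a_n=2a_{n-1}$: the cases $\sigma_1\in\{1,2\}$ are handled by a $\pmp$-preserving reduction to $\fS_{n-1}$ (deletion of the initial $1$, respectively the value swap $1\leftrightarrow 2$ followed by that deletion), while for $\sigma_1\ge 3$ the swap $1\leftrightarrow 2$ is a fixed-point-free, sign-reversing involution, since both affected positions are non-records, the record status of every position carrying a value $\ge 3$ is untouched, and the only pair whose contribution changes is the pair of positions of $1$ and $2$, which flips by exactly one. I checked this bookkeeping (including the borderline subcase $\sigma=2,1,\ldots$, where the position of the value $2$ becomes a record after the swap but carries no contributing inversion) and it is sound; small cases $n\le 3$ confirm $a_n=2^{n-1}$. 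What your argument buys is a self-contained elementary proof that does not depend on the structure of Adams et al's $\Psi$, at the cost of a page of case analysis; note only that, strictly speaking, your map is not itself an involution with $2^{n-1}$ fixed points (it is fixed-point-free), so the statement is obtained via the evaluation $f(-1)=2^{n-1}$ rather than by exhibiting a pairing, which is exactly what the $q=-1$ phenomenon requires here.
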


\begin{proof}
From Proposition~\ref{prop.alt.stat123}, we know that $\Stat{123}(\sigma)$ 
is the number of positional marked patterns $3\,\circlesign{2}\, \circlesign{1}$ in the permutation $\sigma$.
With this translation, 
we see that the involution $\Psi$ defined in \cite[{Proof of~Theorem~4.15}]{Adams_24},
that has $1$ fixed point when $n=0$ and $2^{n-1}$ fixed points for all $n\geq 1$, 
satisfies the following property:
\be
\left|\pmp_{{\tiny 3\,\circlesignsubscript{2}\, \circlesignsubscript{1}}}(\sigma)
\,-\,
\pmp_{{\tiny 3\,\circlesignsubscript{2}\, \circlesignsubscript{1}}}(\Psi(\sigma))\right|
 \;=\; \pm 1
\ee
This finishes the proof.
\end{proof}

\begin{remark}
After discovering the equivalence in Proposition~\ref{prop.alt.stat123},
we updated the webpage of the statistic 123 on the FindStat website.
It now displays the interpretation in terms of positional marked patterns $3\,\circlesign{2}\, \circlesign{1}$.

We also remark that there is no nice Jacobi-type continued fraction for this statistic. 
Hence, the techniques used in the rest of the paper do not work for this statistic.
\end{remark}

\section{Set partitions}
\label{sec.setpart}

{\em A set partition of $[n]$} is a partition of the set $[n]$ into nonempty blocks;
it is counted by the Bell number $B_n$.
We use $\Pi_n$ to denote the set of all set-partitions of $[n]$.
For a partition $\pi$, $|\pi|$ will denote its number of blocks.
An important property of the Bell numbers $B_n$ is that they have the following J-fraction \cite{Touchard_56, Flajolet_80}:

\be
\sum_{n=0}^\infty B_n  t^n
\;=\;
\cfrac{1}{1 - t - \cfrac{t^2}{1- 2t - \cfrac{2 t^2}{1- 3t - \cfrac{3 t^2}{1- \ldots}}}	}\;.
\ee
We will use various generalisations of this identity in this section.

In Section~\ref{sec.setpart.stats} we define some statistics on set partitions.
Then in Section~\ref{sec.KZ.setpart} we recall the Kasraoui--Zeng
involution \cite{Kasraoui_06}
in terms of labelled Motzkin paths and count its number of fixed points.
After this, in Section~\ref{sec.cddsy.setpart}, 
we look at Chen et al's involution \cite{Chen_07}
and count its number of fixed points.
Then in Section~\ref{sec.SZpoly.setpart},
we will use the statistics defined in Section~\ref{sec.setpart.stats}
to recall multivariate generating polynomials
introduced by Sokal and Zeng \cite{Sokal-Zeng_22}.
Finally, we state our main results 
in Section~\ref{sec.csp.setpart}.
Finally in Section~\ref{sec.mahonian},
we conclude with a discussion on Mahonian statistics on set partitions.

\subsection{Statistics on set partitions}
\label{sec.setpart.stats}

Let $\pi\in \Pi_n$ be a set partition of $[n]$.
Following Sokal and Zeng \cite{Sokal-Zeng_22},
we first classify each element $i\in [n]$ as follows:
\begin{itemize}
	\item $i$ is an {\em opener} ($\op$) if it is the smallest element of a block of size $\geq 2$;

	\item $i$ is a {\em closer} ($m_{\ge 2}$) if it is the largest element of a block of size $\geq 2$;

	\item $i$ is an {\em insider} ($\inside$) if it is a non-opener non-closer  element of a block of size $\geq 3$;

	\item $i$ is a {\em singleton} ($m_1$) if it is the sole element of a  block of size $1$.
\end{itemize}
Clearly every element $i$ belongs to precisely one of these four classes;
we will call this the {\bf{\em type classification for set partitions}}\footnote{We borrow 
the term {\em type} from Kasraoui and Zeng \cite{Kasraoui_06};
their usage of the word type is different but in a similar context.}.


We say that an index $i$ is an {\em exclusive record} of $\pi$
if it is not the largest element in its block,
(that is, it is either an opener or an insider)
and its right neighbor (within its block) sticks out farther
to the right than any right neighbor (within its block) of a vertex $<i$.
With this notion of an exclusive record, we refine openers and insiders as follows:
\begin{itemize}
	\item $i$ is an {\em exclusive record opener} ($\erecop$) if it is an opener 
		and also an exclusive record;
	\item $i$ is an {\em non-exclusive record opener} ($\nerecop$) if it is an opener 
		but not an exclusive record;

	\item $i$ is a {\em exclusive record closer} ($\erecin$) if it is a closer and also an exclusive record;
        
	\item $i$ is a {\em non-exclusive record closer} ($\nerecin$) if it is a closer but not an exclusive record.

\end{itemize}
These four statistics along with insiders and singletons
form 6 disjoint categories and we will call them the {\bf{\em record-and-type classification}}.

\bigskip

After this, we introduce crossings, nestings, overlaps and coverings on set partitions.
But first, we recall the {\em partition graph} of $\pi$;
we follow the convention of Sokal and Zeng.
We associate to the partition $\pi$ a graph $\mathcal{G}_\pi$ with vertex set $[n]$
such that $i,j$ are joined by an edge if and only if they are consecutive elements within the same block.
(Thus, the graph $\mathcal{G}_\pi$ has $|\pi|$ connected components and $n-|\pi|$ edges.)
With this in mind, we say that a quadruplet $i<j<k<l$ forms a 
\begin{itemize}
	\item {\em crossing} ($\cros$) if $(i,k)$ and $(j,l)$ are edges in $\mathcal{G}_\pi$;

	\item {\em nesting} ($\nes$) if $(i,l)$ and $(j,k)$ are edges in $\mathcal{G}_\pi$.
\end{itemize}
Also, we say that a triple $i<j<l$ forms a 
\begin{itemize}
	\item {\em pseudo-nesting} ($\psne$) if $j$ is a singleton and $(i,l)$ is an edge in $\mathcal{G}_\pi$.
\end{itemize}

Sokal and Zeng further refine the categories of crossing and nesting depending on the type of the second index $j$
and introduce the following four classes:
{\em crossing of opener type} ($\crop$),
{\em crossing of insider type} ($\crin$),
{\em nesting of opener type} ($\neop$),
{\em nesting of insider type} ($\nein$).

Let $B_1,B_2$ be blocks of $\pi$. We say that the pair $(B_1,B_2)$ forms
\begin{itemize}
	\item an {\em overlap} ($\ov$) if $\min B_1 < \min B_2 < \max B_1 < \max B_2$;

	\item a {\em covering} ($\cov$) if $\min B_1 < \min B_2 < \max B_2 < \max B_1$;

	\item a {\em pseudo-covering} ($\pscov$) if $\min B_1 < \min B_2 = \max B_2 < \max B_1$
		     (so that here $B_2$ is a singleton).
\end{itemize}
They also define two related quantities $\ovin$ and $\covin$ as follows:
\begin{subeqnarray}
   \ovin(\pi)
   & = &
   \!\!
   \sum\limits_{j \in {\rm insiders}\cap B_2}  \!\!
   \#\{ (B_1,B_2) \colon\:  j \in B_2 \,\hbox{ and }\,
                          \min B_1 < j < \max B_1 < \max B_2  \}
         \nonumber \\[-4mm] \\[1mm]
   \covin(\pi)
   & = &
   \!\!
   \sum\limits_{j \in {\rm insiders}\cap B_2}  \!\!
   \#\{ (B_1,B_2) \colon\:  j \in B_2 \,\hbox{ and }\,
                          \min B_1 < j < \max B_2 < \max B_1  \}
         \nonumber \\[-4mm]
 \label{def.ovin.covin}
\end{subeqnarray}

We also recall the notion of a {\em block record}.
We say that $j$ is a {\em block record} if it is either an opener or an
insider and its block sticks out farther to the right than any block containing a vertex $< j$.
We write 
\begin{itemize}
	\item $\brecin(\pi)$ for the number of insiders that are block-records,

	\item $\nbrecin(\pi)$ for the number of insiders that are not block-records,

	\item $\brecop(\pi)$ for the number of openers that are block-records,

	\item $\nbrecop(\pi)$ for the number of openers that are not block-records.
\end{itemize}

\subsection{The Kasraoui--Zeng involution on set partitions}
\label{sec.KZ.setpart}

In \cite{Kasraoui_06}, Kasraoui and Zeng constructed an involution on the set $\Pi_n$
that exchanges the number of crossings and nestings while keeping various other statistics fixed.
We recall this involution and then count its number of fixed points.
However, we will state this involution via labelled Motzkin paths ({\em Charlier diagrams of Viennot})
to be consistent with our formalism in the previous sections.

The Kasraoui--Zeng bijection (as stated by Sokal and Zeng \cite[sec.~7.2]{Sokal-Zeng_22})
is a correspondence between $\Pi_{n}$ and the set of
$(\bfscra, \bfscrb,\bfscrc)$-labelled Motzkin paths of length $n$, 
where
the labels $\xi_i$ lie in the sets
\begin{subeqnarray}
   \scra_h        & = &  \{0 \}  \qquad\quad\;\;\hbox{for $h \ge 0$}  \\
   \scrb_h        & = &  \{0,\ldots,  h-1 \}    \quad\hbox{for $h \ge 1$}  \\
   \scrc_0    &=& \{2\}\times \{0\}\\
   \scrc_h        & = &  \{1\}\times \{0,1,\ldots, h-1\}  \cup \{2\}\times \{0\} \quad\hbox{for $h \ge 1$}
 \label{def.abc.setpart}
\end{subeqnarray}
Let $\motzkinset_{n}$ denote this set of $(\bfscra, \bfscrb,\bfscrc)$-labelled
Motzkin paths of length $n$ with label sets~(\ref{def.abc.setpart}a,b,c).
These are the {\em Charlier diagrams} of Viennot \cite{Viennot_83}.
The Kasraoui--Zeng bijection as per \cite[Section~7.2]{Sokal-Zeng_22}
is a bijective map $\SZbij_{n} : \Pi_{n} \to \motzkinset_{n}$.

We say that a level step at height $h$ is of the {\em first kind} ({\em second kind} resp.)
if the first index of $\xi_i\in \scrc_h$ is $1$ ($2$).


We will now describe an involution $\phi_{n}: \motzkinset_{n} \to \motzkinset_{n}$;
the Kasraoui--Zeng involution will then be obtained by the composition of bijections 
$\bigl(\SZbij_{n}\bigr)^{-1} \circ \phi_{n} \circ \SZbij_{n}$. 
Let $(\omega,\xi)\in \motzkinset_{n}$
where $\xi = (\xi_1,\ldots,\xi_{n})$.
Let $\xicomplement = (\xicomplement_1,\ldots, \xicomplement_{n})$ be the sequence where
$\xicomplement_i$
is given by
\be
\xicomplement_i
\;=\;
\begin{cases}
        \xi_i = 0 &\qquad \text{if step $i$ is a rise}\\
         h-1 - \xi_i & \qquad \text{if step $i$ is a fall}\\
	(1,h-1-\xi_i) &\qquad \text{if step $i$ is a level step of the first kind}\\
	(2,\xi_i) = (2,0) &\qquad \text{if step $i$ is a level step of the second kind}
\end{cases}
\ee

It clearly follows that $(\omega,\xicomplement)\in \motzkinset_{n}$
and that $(\xicomplement)^{{\rm c}} = \xi$.
Thus, the map $(\omega, \xi)\mapsto (\omega,\xicomplement)$
is an involution on the set $\motzkinset_{n}$;
we use $\phi_{n}$ to denote this involution.
The Kasraoui--Zeng involution on set partitions
is the map $\bigl(\SZbij_{n}\bigr)^{-1} \circ \phi_{n} \circ \SZbij_{n}$.

An important property of this involution is that crossings and nestings are exchanged;
i.e., for a partition $\pi$ with $\widehat{\pi} \coloneqq \biggl(\bigl(\SZbij_{n}\bigr)^{-1} \circ \phi_{n} \circ \SZbij_{n}\biggr)(\pi)$, 
we have
\be
(\cros(\pi), \nes(\pi))
\;=\;
(\nes(\widehat{\pi}), \cros(\widehat{\pi})) \;.
\ee

\bigskip

We now establish the number of fixed points of the Kasraoui--Zeng involution.
First, let $(a_n)_{n\geq 0}$ be the sequence of integers given by the linear recurrence
\be
a_{n} \;=\; 3 a_{n-1} \,-\, a_{n-2}
\ee
with initial conditions $a_{0} = a_1 = 1$, see \cite[A001519]{OEIS}.
In fact, these numbers are the odd subsequence of the Fibonacci numbers and hence we will call them the {\em odd Fibonacci numbers}.
The first few numbers of this sequence are
\be
1, 1, 2, 5, 13, 34, 89, 233, 610, 1597, 4181, 10946, 28657, \ldots
\ee
In the rest of this paper, we will use $F_{2n-1}\coloneqq a_n$ to denote these numbers.
(Here $F_{-1} = 1$.)

\begin{lemma}
The number of fixed points of $\Pi_n$ under the action of the Kasraoui--Zeng involution 
$\bigl(\SZbij_{n}\bigr)^{-1} \circ \phi_{n} \circ \SZbij_{n}$
is the odd Fibonacci number $F_{2n-1}$ \cite[A001519]{OEIS}.
\label{lem.csp.fixed.KZ}
\end{lemma}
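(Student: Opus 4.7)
The plan is to classify the fixed points of $\phi_n$ on the labelled Motzkin paths $\motzkinset_n$ step-by-step, observe that they form a very restricted subclass, and then read off the generating function from a truncated Jacobi-type continued fraction which I will match with the odd Fibonacci numbers. Since the Kasraoui--Zeng involution is conjugated to $\phi_n$ via the bijection $\SZbij_n$, counting fixed points of $\phi_n$ is enough.

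First I would go through the four step types to determine when $\xi_i = \xicomplement_i$. A rise carries the unique label $0 \in \scra_h$ and is always fixed. A fall from height $h$ with label $\xi_i \in \scrb_h = \{0,\ldots,h-1\}$ is fixed iff $\xi_i = h-1-\xi_i$, which forces $h$ to be odd with the unique label $\xi_i = (h-1)/2$. A first-kind level step at height $h$ is fixed iff $h$ is odd and the label is $(1,(h-1)/2)$. A second-kind level step has only the label $(2,0)$ and is always fixed. The crucial structural consequence is that \emph{a fixed-point path cannot visit any height $\ge 2$}: from any even height $h \ge 2$ there is no fixed-point fall label, so a path that ever reaches height $2$ cannot descend through it back to $0$. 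Conversely, every closed Motzkin path of length $n$ confined to heights $\{0,1\}$ lifts uniquely, step-by-step, to a fixed point of $\phi_n$.

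The fixed-point paths are therefore weighted Motzkin paths confined to heights $\{0,1\}$, with $2$ options at each step at height $0$ (rise or second-kind level) and $3$ options at each step at height $1$ (fall, first-kind level, or second-kind level). The corresponding Jacobi-type continued fraction has coefficients $\gamma_0 = 1$, $\gamma_1 = 2$, $\beta_1 = 1$ and $\beta_h = 0$ for $h \ge 2$, so it truncates after one floor and gives
\be
\sum_{n=0}^\infty \left|\Pi_n^{\phi_n}\right| t^n
\;=\;
\cfrac{1}{1 - t - \cfrac{t^2}{1 - 2t}}
\;=\;
\frac{1-2t}{1-3t+t^2}\;.
\ee
I would then identify this with $\sum_{n\ge 0} F_{2n-1}\, t^n$ by checking $(1-3t+t^2)\sum_{n\ge 0} F_{2n-1}t^n = 1-2t$ using the recurrence $F_{2n+1} = 3F_{2n-1} - F_{2n-3}$ together with the initial values $F_{-1} = F_1 = 1$. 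The main obstacle is the height-restriction step; after that, the continued-fraction truncation and the Fibonacci identification are routine.
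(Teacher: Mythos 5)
Your proposal is correct and follows essentially the same route as the paper: classify the fixed labels step by step, observe that fixed-point paths are confined to heights $\{0,1\}$ with weights $\gamma_0=1$, $\gamma_1=2$, $\beta_1=1$, and read off the truncated J-fraction $1/\bigl(1-t-\tfrac{t^2}{1-2t}\bigr)$, which generates $F_{2n-1}$. The paper compresses all of this into an appeal to ``general theory,'' so your write-up simply makes explicit the height-restriction argument and the Fibonacci identification that the paper leaves to the reader.
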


\begin{proof}
It suffices to count the number of labels $\xi$ such that 
$\xi = \xicomplement$. 
Using general theory,
we obtain that the generating function of labelled Motzkin paths that are fixed points of the involution $\phi_n$ 
is given by the Jacobi-type continued fraction:
\be
\cfrac{1}{1 - t - \cfrac{t^2}{1- 2t }}\;.
\label{eq.odd.Fibonacci.contfrac}
\ee
This finishes the proof.
\end{proof}



%
%

\subsection{The Chen--Deng--Du--Stanley--Yan (CDDSY) involution on set partitions}
\label{sec.cddsy.setpart}

In \cite{Chen_07}, Chen et al gave a different involution on set partitions.
In their paper, the authors introduced the notion of
{\em vacillating tableaux}, which are certain walks on the Young's lattice,
and they constructed a bijection between set partitions and vacillating tableaux.
They then pullback a simple involution on vacillating tableaux via this bijection,
to describe their involution on set partitions.
We will refer to this involution as the {\em CDDSY involution}.
In this section, we will show that the number of fixed points of 
the CDDSY involution is also given by Lemma~\ref{lem.csp.fixed.KZ}.

We first recall some basic definitions.
Let $\mathbb{Y}$ be the {\em Young's lattice}, that is, the set of all integer partitions ordered component-wise.

\begin{definition}
A {\em vacillating tableaux} $V_{\lambda}^{2n}$ of shape $\lambda$ and length $2n$ is a sequence
$\emptyset=\lambda^0,\lambda^1,\lambda^2,\ldots, \lambda^{2n} = \lambda$ of integer partitions
with the property that $\lambda^{2i+1}$ is obtained by
either setting $\lambda^{2i+1} = \lambda^{2i}$ or by deleting a square from  $\lambda^{2i}$,
and $\lambda^{2i}$ is obtained by either setting
$\lambda^{2i} = \lambda^{2i-1}$ or by adding a square to $\lambda^{2i-1}$.
\end{definition}

In terms of the $U, D$ and $I$ operators on the Young's lattice $\mathbb{Y}$,
we can restate the conditions in this definition as follows:
given a $\lambda^{2i-2}$ in the sequence of a vacillating tableaux,
the integer partition $\lambda^{2i-1}$ is obtained by applying the operator 
$I + D$ to $\lambda^{2i-2}$ and then picking one of the summands,
and similarly, 
the integer partition $\lambda^{2i}$ is obtained by applying the operator 
$I + U$ to $\lambda^{2i-1}$  and then selecting one of the summands.

Let $\mathcal{VT}_{n}$ denote the set of all vacillating tableaux of shape $\emptyset$
and length $2n$. 
In \cite[Section~3]{Chen_07}, the authors constructed a bijection 
$\CDDSY_n : \Pi_n \to \VT_n$. 
%
%
%
%
%
%
Let $\tau_n : \VT_n \to \VT_n$ be the involution defined by taking the conjugate to each partition
$\lambda^i$.
The CDDSY involution on set partitions (defined in \cite[proof of Theorem~1]{Chen_07})
is the map 
$\bigl(\CDDSY_n\bigr)^{-1} \circ \tau_n \circ \CDDSY_n$.

We now establish the number of fixed points of the CDDSY involution.

\begin{lemma}
The number of fixed points of $\Pi_n$ under the action of the CDDSY involution 
$\bigl(\CDDSY_n\bigr)^{-1} \circ \tau_n \circ \CDDSY_n$
is the odd Fibonacci number $F_{2n-1}$ \cite[A001519]{OEIS}.
\label{lem.csp.fixed.CDDSY}
\end{lemma}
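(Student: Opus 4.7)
The plan is to reduce the count of CDDSY fixed points to counting vacillating tableaux in which every partition $\lambda^i$ is self-conjugate, and then to show that such tableaux are drastically restricted. Since $\tau_n$ conjugates each $\lambda^i$ componentwise, a set partition $\pi\in\Pi_n$ is fixed by $\bigl(\CDDSY_n\bigr)^{-1}\circ\tau_n\circ\CDDSY_n$ if and only if every partition appearing in $\CDDSY_n(\pi)$ is self-conjugate.

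The key structural lemma I would prove first is: if $\lambda$ is self-conjugate and $\mu$ differs from $\lambda$ by a single square at position $(i,j)$, then $\mu$ is self-conjugate if and only if $i=j$. Indeed, since $\lambda=\lambda'$, the conjugate $\mu'$ differs from $\lambda$ at the transposed position $(j,i)$, so $\mu=\mu'$ forces $(i,j)=(j,i)$. Combining this with the description of addable and removable corners, every $U$-step in a self-conjugate vacillating tableau must add the square $(d+1,d+1)$ (where $d$ is the current Durfee size), and every $D$-step must remove $(d,d)$. A short hook-length calculation then shows that (i) the square added by such a $U$-step creates a new hook of length $1$; (ii) the square $(d,d)$ is a corner of a self-conjugate $\lambda$ only when the hook at $(d,d)$ has length $1$; and therefore (iii) throughout the walk no hook of length $\ge 3$ is ever created or destroyed. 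Since $\lambda^0=\emptyset$ has no hooks at all and the hook lengths of a self-conjugate partition are distinct odd positive integers, it follows that $\lambda^i\in\{\emptyset,(1)\}$ for every $i$.

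The remaining step is a direct enumeration of length-$2n$ walks on the two-vertex graph $\{\emptyset,(1)\}$. A case-by-case inspection of the allowed pairs of steps (choosing $I$ or $D$ for the odd step, then $I$ or $U$ for the even step) gives: from $\emptyset$, one self-stay $(I,I)$ and one move to $(1)$ via $(I,U)$; from $(1)$, two self-stays $(I,I)$ and $(D,U)$ together with one move to $\emptyset$ via $(D,I)$. Letting $x_n$ and $y_n$ denote the numbers of walks of length $2n$ from $\emptyset$ ending at $\emptyset$ and $(1)$ respectively, this gives
\begin{equation*}
x_n=x_{n-1}+y_{n-1},\qquad y_n=x_{n-1}+2\,y_{n-1},\qquad x_0=1,\; y_0=0.
\end{equation*}
Eliminating $y$ yields $x_{n+1}=3x_n-x_{n-1}$ with $x_0=x_1=1$, which is precisely the defining recurrence of $F_{2n-1}$; equivalently, $\sum_n x_n t^n$ equals the continued fraction~\eqref{eq.odd.Fibonacci.contfrac}.

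The main obstacle I anticipate is the ``trapping'' claim that the walk is confined to $\{\emptyset,(1)\}$. It rests on the combination of two small but subtle observations about self-conjugate shapes---only diagonal squares can be added or removed, and only hooks of length $1$ can appear or disappear when doing so---rather than on any lengthy computation. Once this reduction is established, the final enumeration is a routine two-state transfer-matrix calculation whose generating function coincides with the continued fraction~\eqref{eq.odd.Fibonacci.contfrac} already identified in the Kasraoui--Zeng case.
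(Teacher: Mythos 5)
Your proof is correct, and its overall skeleton is the same as the paper's: reduce to counting fixed points of $\tau_n$, show such a tableau is confined to the two shapes $\emptyset$ and $(1)$, and then enumerate the resulting walks, obtaining the recurrence $a_n = 3a_{n-1}-a_{n-2}$ of the odd Fibonacci numbers. The two places where you differ are local. For the confinement step the paper uses a one-line observation: a tableau leaving $\{\emptyset,(1)\}$ must, since sizes change by at most one per step, pass through a partition of $2$, and neither $(2)$ nor $(1,1)$ is self-conjugate; your diagonal-cell/principal-hook argument (only diagonal squares can be added or removed from a self-conjugate shape, and only when the corresponding principal hook is $1$) is valid---provided ``hook lengths'' is read as the diagonal (principal) hook lengths---but is heavier machinery than needed. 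For the final count the paper instead exhibits a bijection from these confined tableaux to labelled Motzkin paths of height at most $1$ and reads off the continued fraction \eqref{eq.odd.Fibonacci.contfrac}, whereas your two-state transfer-matrix recurrence $x_n=x_{n-1}+y_{n-1}$, $y_n=x_{n-1}+2y_{n-1}$ is an equally correct and more elementary way to reach the same generating function.
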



\begin{proof}
It suffices to count the number of fixed points of the involution $\tau_n$.
Let $\VT_n^{\flat}$ denote the set of all vacillating tableau $(\lambda^0, \ldots, \lambda^{2n})$
such that each integer partition $\lambda^{i}$ is either $\emptyset$ or $1$.
We will show that $\VT_n^{\flat}$ is the set of fixed points of $\tau_n$.
In fact, it is immediate that an element of $\VT_n^{\flat}$ is always a fixed point of $\tau_n$.
On the other hand, any vacillating tableaux not in the set $\VT_n^{\flat}$
must have index $i$ such that
$\lambda^i\vdash 2$.
For such an index $i$, the partition $\lambda^i$ and its conjugate must be different.
Hence, such a vacillating tableaux cannot be a fixed point of $\tau_n$.

Let $M_n^\flat$ denote the set of Motzkin paths of length $n$ which never cross the line $x=1$,
where the level steps at height $1$ gets a label $0$ or $1$.
From \cite{Flajolet_80}, we know that the ordinary generating function of the 
cardinalities of $M_n^\flat$ is given by the continued fraction
\be
\sum_{n=0}^\infty \bigl|M_n^\flat\bigr| t^n
\;=\;
\cfrac{1}{1 - t - \cfrac{t^2}{1- 2t }}\;.
\ee
Notice that the right-hand side is the same as in~\eqref{eq.odd.Fibonacci.contfrac}.
Thus, a bijection between the sets $\VT_n^{\flat}$ and $M_n^\flat$
will prove this lemma.
We construct such a bijection now.

\medskip


Let $(\lambda^0, \ldots, \lambda^{2n})\in \VT_n^\flat$ and
let $\omega_i\in \mathbb{N}\times \{0,1\}$ be the pair
\be
\omega_i 
\;=\;
\begin{cases}
(i,0) \qquad \text{if $\lambda^{2i} = \emptyset$}
\\
(i,1) \qquad \text{if $\lambda^{2i} = 1$}
\end{cases}
\;.
\ee
The sequence $\omega=(\omega_0,\omega_1,\ldots,\omega_n)$
is a path of length $n$ which never crosses the line $x=1$.
By construction, step $i$ is a level step at height $1$ if and only if 
$\lambda^{2i-2} = \lambda^{2i} =1$.
In this situation, the integer partition $\lambda^{2i-1}$ can be either $1$ or $\emptyset$,
and step $i$ gets label $0$ and $1$, respectively.
This gives us a Motzkin path $\omega$ where each level step at height $1$ 
gets a label $0$ or $1$.
This is our desired bijection.
\end{proof}
%
%

\subsection{Sokal and Zeng's multivariate polynomials enumerating statistics on set partitions}
\label{sec.SZpoly.setpart}

In this section, we recall three of Sokal--Zeng's multivariate polynomials enumerating statistics on set partitions;
first we shall state \cite[eq.~(3.6)]{Sokal-Zeng_22} which does not have any $p,q$-statistics,
then we shall state their first $p,q$-generalisation involving crossings and nestings 
\cite[eq.~(3.11)]{Sokal-Zeng_22},
and then finally we state their second $p,q$-generalisation involving overlaps and coverings.

The first polynomial is \cite[eq.~(3.6)]{Sokal-Zeng_22}
\be
   B_n^{(1)}(x_1,x_2,y_1,y_2,v_1,v_2)
   \;=\;
   \sum_{\pi \in \Pi_n}
      x_1^{m_1(\pi)} x_2^{m_{\ge 2}(\pi)}
      y_1^{\erecin(\pi)} y_2^{\erecop(\pi)}
      v_1^{\nerecin(\pi)} v_2^{\nerecop(\pi)}
   \;.
 \label{def.Bn.x1x2y1y2}
\ee
The ordinary generating function of these polynomials have a nice Jacobi-type continued fraction
\cite[Theorem~3.2]{Sokal-Zeng_22}.

The second polynomial is \cite[eq.~(3.11)]{Sokal-Zeng_22}
\begin{eqnarray}
   & &  \hspace*{-3mm}
		B_n^{(2)}(x_1,x_2,y_1,y_2,v_1,v_2,p_1,p_2,q_1,q_2,r)
        \nonumber \\[2mm]
   & & 
   \qquad=\;
   \sum_{\pi \in \Pi_n}
      x_1^{m_1(\pi)} x_2^{m_{\ge 2}(\pi)}
      y_1^{\erecin(\pi)} y_2^{\erecop(\pi)}
      v_1^{\nerecin(\pi)} v_2^{\nerecop(\pi)}
        \:\times
        \qquad\qquad
        \nonumber \\[1mm]
    & & \qquad\qquad\quad\;\:
      p_1^{\crin(\pi)} p_2^{\crop(\pi)}
      q_1^{\nein(\pi)} q_2^{\neop(\pi)}
      r^{\psne(\pi)}
   \:.
   \qquad
 \label{def.Bn.crossnest.refined}
\end{eqnarray}
The ordinary generating function of these polynomials have a nice Jacobi-type continued fraction as well
\cite[Theorem~3.3]{Sokal-Zeng_22}.

Our third polynomial is analogous to
\eqref{def.Bn.crossnest.refined}
but uses block-records, overlaps, coverings and pseudo-coverings
in place of exclusive records, crossings, nestings and pseudo-nestings:
\begin{eqnarray}
   & &  \hspace*{-3mm}
   B^{(3)}_n(x_1,x_2,y_1,y_2,v_1,v_2,p_1,p_2,q_1,q_2,r)
        \nonumber \\[2mm]
   & & 
   \qquad=\;
   \sum_{\pi \in \Pi_n}
      x_1^{m_1(\pi)} x_2^{m_{\ge 2}(\pi)}
      y_1^{\brecin(\pi)} y_2^{\brecop(\pi)}
      v_1^{\nbrecin(\pi)} v_2^{\nbrecop(\pi)}
        \:\times
        \qquad\qquad
        \nonumber \\[-1mm]
    & & \qquad\qquad\quad\;\:
      p_1^{\ovin(\pi)} p_2^{\ov(\pi)}
      q_1^{\covin(\pi)} q_2^{\cov(\pi)}
      r^{\pscov(\pi)}
   \:.
   \qquad
 \label{def.Bn.ovcov.brec}
\end{eqnarray}
It turns out that these polynomials are not merely {\em analogous}\/
to \reff{def.Bn.crossnest.refined};
they are {\em identical}\/ to \reff{def.Bn.crossnest.refined} 
\cite[Theorem~3.5]{Sokal-Zeng_22}.
See historical remark in \cite[Section~3.4]{Sokal-Zeng_22}.
	
In this paper, we refrain from working with Sokal and Zeng's master continued fractions for set partitions
which involve infinitely many statistics on set partitions \cite[Sections~3.7-3.10]{Sokal-Zeng_22}.
However, it is easy to generalise our results to the infinite-variable setting.

\subsection{Instances of the cyclic sieving phenomenon}
\label{sec.csp.setpart}

%
%
%
%
%

We are now ready to state instances of the cyclic sieving phenomena on set partitions 
with respect to the Kasraoui--Zeng involution
or the CDDSY involution,
or as a matter of fact, 
for any involution having $F_{2n-1}$ fixed points.

We first state a very general result in Proposition~\ref{prop.setpart.csp}.

\begin{proposition}
Let $\Stat{} : \Pi_n \to \mathbb{N}$ be a statistic defined on set partitions.

\begin{itemize}
	\item[(a)] Assume that $\Stat{}$ is obtained as a specialisation of the polynomial $B_n^{(1)}(x_1,x_2,y_1,y_2,v_1,v_2)$ 
	defined in~\eqref{def.Bn.x1x2y1y2}, 
	i.e.,
		there exists polynomials $x_1(q), x_2(q),y_1(q),y_2(q),v_1(q),v_2(q) \in\mathbb{R}[q]$
	satisfying
	\be
	\sum_{\pi\in \Pi_n} q^{\Stat{}} 
		\;=\;
	B_n^{(1)}(x_1(q),x_2(q),y_1(q),y_2(q),v_1(q),v_2(q))
	\ee
	for all $n\geq 0$.
	Further assume that the following equations are satisfied
	\begin{subeqnarray}
		x_1(-1)
		&=&
		y_1(-1)
		\;=\;
		1
		\\
		x_2(-1)\cdot y_2(-1)
		&=&
		1
		\\
		 x_2(-1)\cdot v_2(-1)
                &=&
                -1
	\label{eq.csp.cond.setpart}
	\end{subeqnarray}
	Then the statistic $\Stat{}$ exhibits the cyclic sieving phenomenon with respect to involutions having $F_{2n-1}$ fixed points \cite[A001519]{OEIS}.

	\item[(b)]
	Assume that $\Stat{}$ is obtained as a specialisation of the polynomial\\ 
	$B_n^{(2)}(x_1,x_2,y_1,y_2,v_1,v_2,p_1,p_2,q_1,q_2,r)$
	defined in~\eqref{def.Bn.crossnest.refined},
        i.e.,
	there exists polynomials $x_1(q), x_2(q),y_1(q),y_2(q),v_1(q),v_2(q),p_1(q),p_2(q),q_1(q),q_2(q),r(q) \in\mathbb{R}[q]$
        satisfying
        \be
        \sum_{\pi\in \Pi_n} q^{\Stat{}}
                \;=\;
		B_n^{(2)}(x_1(q),x_2(q),y_1(q),y_2(q),v_1(q),v_2(q), p_1(q),p_2(q),q_1(q),q_2(q),r(q))
        \ee
        for all $n\geq 0$.
        Further assume that the following equations are satisfied
        \begin{subeqnarray}
                x_1(-1)
                &=&
                1
                \\
                x_2(-1)\cdot y_2(-1)
                &=&
                1
                \\
                 r(-1)+y_1(-1)
                &=&
                2
		\\
		p_2(-1) + q_2(-1)\cdot x_2(-1)\cdot v_2(-1)
		&=&
		0
	\label{eq.csp.cond.pqgen.setpart}
        \end{subeqnarray}
        Then the statistic $\Stat{}$ exhibits the cyclic sieving phenomenon with respect to involutions having $F_{2n-1}$ fixed points.

	\item[(c)]
        Assume that $\Stat{}$ is obtained as a specialisation of the polynomial\\
	$B_n^{(3)}(x_1,x_2,y_1,y_2,v_1,v_2,p_1,p_2,q_1,q_2,r)$
        defined in~\eqref{def.Bn.ovcov.brec}.
	Further assume that equations~\eqref{eq.csp.cond.pqgen.setpart}
	are satisfied.
        Then the statistic $\Stat{}$ exhibits the cyclic sieving phenomenon with respect to involutions having $F_{2n-1}$ fixed points.

\end{itemize}
\label{prop.setpart.csp}
\end{proposition}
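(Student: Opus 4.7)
The plan is to follow the template of the proof of Proposition~\ref{prop.perm.csp}: for each of the three parts, I substitute the parametric polynomials into the corresponding Sokal--Zeng Jacobi-type continued fraction, set $q = -1$, and verify that the hypotheses force the resulting continued fraction to collapse to the finite continued fraction~\eqref{eq.odd.Fibonacci.contfrac}. Since Lemma~\ref{lem.csp.fixed.KZ} identifies this continued fraction with the generating function $\sum_{n \ge 0} F_{2n-1}\, t^n$ for the number of fixed points of the Kasraoui--Zeng involution (and, by Lemma~\ref{lem.csp.fixed.CDDSY}, also of the CDDSY involution), the cyclic sieving phenomenon then follows immediately from Definition~\ref{def.csp}. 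Concretely, it suffices to verify four numerical identities after specialisation: $\gamma_0(-1) = 1$, $\gamma_1(-1) = 2$, $\beta_1(-1) = 1$, and $\beta_2(-1) = 0$; the last identity truncates the continued fraction beyond depth~$2$, so no condition on deeper levels is needed.

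For part~(a), I invoke the J-fraction of \cite[Theorem~3.2]{Sokal-Zeng_22} for $B_n^{(1)}$, whose coefficients $\gamma_n$ and $\beta_n$ are explicit polynomials in $x_1, x_2, y_1, y_2, v_1, v_2$ depending affinely on~$n$. The first condition of~\eqref{eq.csp.cond.setpart} yields $\gamma_0(-1) = x_1(-1) = 1$, and combined with the second condition it also produces $\gamma_1(-1) = 2$ and $\beta_1(-1) = y_1(-1) \cdot x_2(-1) y_2(-1) = 1$. The key role of condition~(c), namely $x_2(-1) v_2(-1) = -1$, is to ensure that the closer-type factor appearing in $\beta_2$, of shape $x_2 y_2 + x_2 v_2$, becomes $1 + (-1) = 0$, thereby truncating the continued fraction at depth~$2$ and reproducing~\eqref{eq.odd.Fibonacci.contfrac}.

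For part~(b), I argue identically using the $p,q$-refined J-fraction \cite[Theorem~3.3]{Sokal-Zeng_22} for $B_n^{(2)}$. The fourth condition of~\eqref{eq.csp.cond.pqgen.setpart}, namely $p_2(-1) + q_2(-1) \, x_2(-1) \, v_2(-1) = 0$, now plays the role previously played by condition~(c) of~\eqref{eq.csp.cond.setpart}: in the $p,q$-refined J-fraction, the closer-factor in $\beta_2$ takes the shape $p_2 x_2 y_2 + q_2 x_2 v_2$, and this condition forces it to vanish. The third condition $r(-1) + y_1(-1) = 2$ combines the pseudo-nesting contribution from $r$ with the insider contribution from $y_1$ to produce the required value $\gamma_1(-1) = 2$. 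Part~(c) follows immediately from part~(b), since \cite[Theorem~3.5]{Sokal-Zeng_22} asserts the polynomial identity $B_n^{(3)} = B_n^{(2)}$ under the natural identification of variables.

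The principal technical obstacle is the careful bookkeeping needed to identify the precise form of the $\gamma_n$ and $\beta_n$ appearing in each Sokal--Zeng continued fraction and to verify that the hypothesised algebraic relations produce exactly the four specialisation identities described above. Once this verification is completed for part~(a), the parallel verifications for~(b) and~(c) are essentially mechanical, and the entire proof reduces to the single elementary collapse to~\eqref{eq.odd.Fibonacci.contfrac} which captures the odd Fibonacci fixed-point count from Lemma~\ref{lem.csp.fixed.KZ}.
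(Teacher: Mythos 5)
Your proposal is correct and takes essentially the same route as the paper's proof: substitute the parametric polynomials into the Sokal--Zeng continued fractions \cite[Theorems~3.2, 3.3 and~3.5]{Sokal-Zeng_22}, set $q=-1$, and observe that the hypotheses force $\gamma_0=1$, $\gamma_1=2$, $\beta_1=1$ and $\beta_2=0$, so the J-fraction collapses to \eqref{eq.odd.Fibonacci.contfrac}, the generating function of the odd Fibonacci numbers counting the fixed points (Lemma~\ref{lem.csp.fixed.KZ}). One harmless slip: in part~(a) the relevant coefficient is $\beta_1 = x_2 y_2$ with no $y_1$ factor, but since the hypotheses give $y_1(-1)=1$ your numerical conclusion $\beta_1(-1)=1$ is unaffected.
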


\begin{proof}
(a) We replace the variables in \cite[Theorem~3.2]{Sokal-Zeng_22} with  polynomials
$x_1(q)$, $x_2(q)$, $y_1(q)$, $y_2(q)$, $v_1(q)$, $v_2(q)$ 
and then set $q=-1$.
Using the conditions in~\eqref{eq.csp.cond.setpart}, 
it follows that the continued fraction~\cite[eq.~(3.7)]{Sokal-Zeng_22}
under this substitution becomes
\be
\sum_{n=0}^\infty \left. B_n^{(1)}(x_1(q), x_2(q),y_1(q),y_2(q),v_1(q),v_2(q)) \right|_{q=-1} t^n
\;=\;
\dfrac{1}{1-t - \dfrac{t^2}{1- 2 t}}\;.
\ee
The right-hand side is the generating function for the odd Fibonacci numbers $F_{2n-1}$.

\medskip

The proof of (b) and (c) is similar to that of (a)
but now we use \cite[Theorems~3.3 and~3.5]{Sokal-Zeng_22}.
\end{proof}

As a consequence of Proposition~\ref{prop.setpart.csp},
we can state our main theorem for set partitions as an easy corollary:

\begin{theorem}[CSP for set partitions]
The following statistics on set partitions exhibit the cyclic sieving phenomenon
with respect to involutions having $F_{2n-1}$ fixed points \cite[A001519]{OEIS},
(in particular, with respect to the Kasraoui--Zeng and the CDDSY involutions):
\begin{quote}

non-exclusive records ($=n - |\pi| - {\rm erec}(\pi)$ ), non-exclusive record openers,

crossings, nestings, crossing of opener type, nesting of opener type

non-block records (insiders and openers), openers that are not block-records, overlaps, coverings

{\color{red}


}
\end{quote}

\end{theorem}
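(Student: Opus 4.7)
The plan is to verify that each statistic in the list can be extracted as a specialization of one of the three Sokal--Zeng polynomials $B_n^{(1)}$, $B_n^{(2)}$, or $B_n^{(3)}$ recalled in Section~\ref{sec.SZpoly.setpart}, and then check the corresponding substitution hypotheses of Proposition~\ref{prop.setpart.csp}. Since that proposition does all the heavy lifting (it is the step where the continued-fraction identities \cite[Theorems~3.2,~3.3,~3.5]{Sokal-Zeng_22} collapse, at $q=-1$, to the generating function \eqref{eq.odd.Fibonacci.contfrac} of the odd Fibonacci numbers $F_{2n-1}$), the proof of the theorem is entirely a bookkeeping exercise: for each statistic, we must exhibit a substitution and verify equations (\ref{eq.csp.cond.setpart}) or (\ref{eq.csp.cond.pqgen.setpart}) at $q=-1$.

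First I would treat the statistics coming from the record-and-type classification via $B_n^{(1)}$. For non-exclusive records ($\nerecin+\nerecop$) I take $v_1(q)=v_2(q)=q$ and set the remaining variables to $1$; at $q=-1$ this gives $x_1=y_1=1$, $x_2y_2=1$ and $x_2v_2=-1$, so (\ref{eq.csp.cond.setpart}) holds. For non-exclusive record openers, I take only $v_2(q)=q$ (all other variables $=1$) and the same check goes through. This invokes Proposition~\ref{prop.setpart.csp}(a).

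Next, for crossings, nestings, and their opener-type refinements I use $B_n^{(2)}$ from \eqref{def.Bn.crossnest.refined}. The substitutions are the natural ones: for $\cros=\crin+\crop$ set $p_1(q)=p_2(q)=q$ and all other variables to $1$; for $\nes=\nein+\neop$ set $q_1(q)=q_2(q)=q$; for the opener-type refinements set only $p_2(q)=q$ or $q_2(q)=q$ respectively. In all four cases the first three equations of (\ref{eq.csp.cond.pqgen.setpart}) reduce to $1=1$ and $r(-1)+y_1(-1)=2$, while the fourth equation $p_2(-1)+q_2(-1)\,x_2(-1)\,v_2(-1)=0$ holds because exactly one of $p_2(-1)$, $q_2(-1)$ is $-1$ and the remaining factors are $+1$. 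Proposition~\ref{prop.setpart.csp}(b) then yields the CSP.

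Finally, for non-block-records, openers that are not block-records, overlaps and coverings, I use $B_n^{(3)}$ from \eqref{def.Bn.ovcov.brec} with exactly the same substitutions as above (now on the variables $v_1,v_2,p_1,p_2,q_1,q_2$ corresponding to $\nbrecin$, $\nbrecop$, $\ovin$, $\ov$, $\covin$, $\cov$); the conditions to check are identical to those in part (b), and the same arithmetic verifies them. Proposition~\ref{prop.setpart.csp}(c) then completes the proof. There is no serious obstacle: the only subtlety is to be careful that for each listed statistic the corresponding monomial in the appropriate $B_n^{(i)}$ genuinely isolates it, which is immediate from the definitions in Section~\ref{sec.setpart.stats} and the explicit forms \eqref{def.Bn.x1x2y1y2}, \eqref{def.Bn.crossnest.refined}, \eqref{def.Bn.ovcov.brec}.
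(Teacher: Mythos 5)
Your proposal is correct and follows exactly the route the paper intends: the theorem is stated there as an immediate corollary of Proposition~\ref{prop.setpart.csp}, and your substitutions (setting the relevant $v$-, $p$-, or $q$-variables to $q$ and all others to $1$ in $B_n^{(1)}$, $B_n^{(2)}$, $B_n^{(3)}$) together with the checks of \eqref{eq.csp.cond.setpart} and \eqref{eq.csp.cond.pqgen.setpart} at $q=-1$ are precisely the omitted bookkeeping. No gaps.
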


\subsection{A remark on Mahonian statistics on set partitions}
\label{sec.mahonian}

%

In \cite{Milne_82}, Milne introduced the inversion statistic
on set partitions;
following Wachs and White \cite{Wachs_91}
we write it as  \cite[eq.~(3.61a)]{Sokal-Zeng_22}
\be
{\rm lb}(\pi)
\;\eqdef\;
\#\{(B_1,B_2,k)
\:\colon \;
\min B_1< \min B_2 < k\in B_1
\}\;.
\ee
The following continued fraction identity was 
proved in \cite{Zeng_94}
\be
\sum_{n=0}^\infty \sum_{\pi \in \Pi_n} q^{{\rm lb}(\pi)} t^n
\;=\;
\cfrac{1}{1 - \cfrac{ t}{ 1- \cfrac{t}{1- \cfrac{q t}{ 1- \cfrac{(1+q) t}{1- \cfrac{q^2  t}{1- \cfrac{(1+q+q^2)t}{1-\ldots})}}}}}}\;.
\label{eq.setpart.inv.cf}
\ee
Various other statistics have been shown to be equidistributed with
the inversion statistic ${\rm lb}(\pi)$,
see more details in 
\cite[Sections~3.11,~3.12]{Sokal-Zeng_22}
and in
\cite{Liu_22};
these are known as the {\em Mahonian statistics on set partitions}.

Notice that on setting 
$q=-1$ in the continued fraction~\eqref{eq.setpart.inv.cf},
we get
\be
\sum_{n=0}^\infty \sum_{\pi \in \Pi_n} (-1)^{{\rm lb}(\pi)} t^n
\;=\;
\dfrac{1}{1- t- t^2}
\ee
where clearly the right-hand side is the generating function of the Fibonacci numbers $F_n$.
%
Thus, an involution $\phi: \Pi_n \to \Pi_n$
must have Fibonacci many fixed points
if any Mahonian statistic were to exhibit the cyclic-sieving phenomenon with respect to $\phi$.

We will now construct such an involution by 
using the Kasraoui--Zeng bijection $\SZbij_{n} : \Pi_{n} \to \motzkinset_{n}$.
For a level step of a Motzkin path $\omega \in \motzkinset_{n}$
we rewrite some labels as $c_0, c_1,\ldots, c_h$
where  
\be
c_i 
\;=\;
\begin{cases}
(2,0) &\quad \text{if $i = h$}\\
(1,i) &\quad \text{otherwise}
\end{cases}
\;.
\ee
Now let $\bar{\phi} : \motzkinset_n \to \motzkinset_n$
be the following involution on Charlier diagrams
that sends $(\omega, \xi)$ to $(\omega, \xihat)$
where $\xihat = (\xihat_1, \ldots, \xihat_n)$ 
is given by
\be
\xihat_i
\;=\;
\begin{cases}
        \xi_i = 0 &\qquad \text{if step $i$ is a rise}\\
         h-1 - \xi_i & \qquad \text{if step $i$ is a fall}\\
         c_{h-k} &\qquad \text{if step $i$ is a level step and $\xi_i = c_k$}
\end{cases}
\;.
\ee
It is clear that $\bar{\phi}$
is an involution on $\motzkinset_n$
with $F_n$ fixed points.
The composition of maps
$\phi\coloneqq\left(\SZbij_n\right)^{-1}\circ \bar{\phi}\circ \SZbij_n$
gives us an involution on set partitions 
with the same number of fixed points.
We leave out all the details and state the following result.

\begin{theorem}
The Mahonian statistics on set partitions 
exhibit the cyclic sieving phenomenon 
with respect to involutions have
$F_n$ fixed points
(in particular the involution
$\left(\SZbij_n\right)^{-1}\circ \bar{\phi}\circ \SZbij_n$).
\end{theorem}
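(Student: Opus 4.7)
The plan is to apply the same template used throughout this section (compare Lemma~\ref{lem.csp.fixed.KZ} and Proposition~\ref{prop.setpart.csp}): combine the known continued fraction~\eqref{eq.setpart.inv.cf} for the generating polynomial of the inversion statistic ${\rm lb}$ with a direct continued-fraction count of the fixed points of the involution $\bar{\phi}$ on Charlier diagrams, and then match the two specializations at $q=-1$. Since we are in the $q=-1$ regime of Stembridge, and since every Mahonian statistic has the same distribution as ${\rm lb}$ (by the references cited just before the theorem), it suffices to check that $\sum_{\pi\in\Pi_n} (-1)^{{\rm lb}(\pi)}$ coincides with the number of fixed points of $\phi = \left(\SZbij_n\right)^{-1}\circ \bar\phi \circ \SZbij_n$, and that both quantities equal $F_n$.

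The first equality, $\sum_{\pi\in\Pi_n} (-1)^{{\rm lb}(\pi)} = F_n$, is essentially already displayed in the paragraph preceding the theorem: specializing~\eqref{eq.setpart.inv.cf} at $q=-1$, the partial numerators $1, 1, q, 1+q, q^2, 1+q+q^2,\ldots$ become $1,1,-1,0,1,0,\ldots$, so the vanishing at the fourth level truncates the continued fraction at finite depth, and a short calculation collapses it to $1/(1-t-t^2)$, the ordinary generating function of the Fibonacci numbers.

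For the second equality, I would count the fixed points of $\bar{\phi}$ via the standard Flajolet-style Jacobi-type continued fraction for weighted Motzkin paths. A rise carries a unique label and is always fixed, contributing weight $1$. A fall whose starting height is $h$ has labels swapped by $\xi_i \mapsto h-1-\xi_i$, which admits a fixed label iff $h$ is odd. A level step at height $h$ has labels swapped by $c_k \mapsto c_{h-k}$, which admits a fixed label iff $h$ is even. Substituting these weights — in particular $\gamma_0 = 1$, $\beta_1 = 1$, $\gamma_1 = 0$, $\beta_2 = 0$ — into the J-fraction truncates the continued fraction at finite depth and yields $1/(1-t-t^2)$ once more, so $\bar{\phi}$ (hence $\phi$) has $F_n$ fixed points. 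Combining the two computations with the equidistribution of all Mahonian statistics with ${\rm lb}$ completes the proof.

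The main obstacle, such as it is, lies in the parity bookkeeping for the second calculation: one must verify that the parity conditions on the starting height force no fall from an even height and no level step at an odd height to be fixable, so that the fixed Motzkin paths are confined to heights $0$ and $1$, which is precisely what produces the Fibonacci continued fraction. Everything else is a direct application of the recipe already established for the Kasraoui--Zeng involution.
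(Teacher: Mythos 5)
Your argument is correct and follows the paper's own route: specialize the continued fraction \eqref{eq.setpart.inv.cf} at $q=-1$ to obtain $1/(1-t-t^2)$, and count the fixed points of $\bar{\phi}$ by the same weighted-Motzkin-path reasoning used for Lemma~\ref{lem.csp.fixed.KZ} (the paper simply declares this fixed-point count clear and omits the details you supply, namely the parity analysis giving $\gamma_0=1$, $\beta_1=1$, $\gamma_1=0$, $\beta_2=0$ and confinement to heights $0$ and $1$). The only slip is cosmetic: at $q=-1$ the sixth S-fraction numerator $1+q+q^2$ equals $1$, not $0$, but this is immaterial since the vanishing of the fourth numerator already truncates the fraction.
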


\section{Perfect matchings}
\label{sec.perfect}

A {\em perfect matching of $[2n]$} is an involution on the set $[2n]$ with no fixed points;
we use $\mathcal{I}_{2n}\subseteq \mathfrak{S}_{2n}$ to denote the set of perfect matchings.
It is known that the cardinality of this set is $|\mathcal{I}_{2n}| = (2n-1)!!$.
An important property of the sequence of numbers $(2n-1)!!$ is that they have the following Stieltjes-type continued fraction
\cite[sec.~29]{Euler_1760}
\be
\sum_{n=0}^\infty (2n-1)!!\, t^n
\;=\;
\dfrac{1}{1- \dfrac{t}{1-\dfrac{2t}{1-\dfrac{3t}{1-\ldots}}}}
\;.
\ee
We will use various generalisations of this identity in this section.

In Section~\ref{sec.perfect.stats} we define some statistics on set partitions.
Then in Section~\ref{sec.KZ.perfect} we recall the Kasraoui--Zeng
involution \cite{Kasraoui_06} on perfect matchings
in terms of labelled Dyck paths and count its number of fixed points.
After this, in Section~\ref{sec.cddsy.perfect}, 
we look at Chen et al's involution \cite{Chen_07} 
on perfect matchings
and count its number of fixed points.
Then in Section~\ref{sec.SZpoly.perfect},
we will use the statistics defined in Section~\ref{sec.perfect.stats}
to recall multivariate generating polynomials
introduced by Sokal and Zeng \cite{Sokal-Zeng_22}.
Finally, we state our main results 
in Section~\ref{sec.csp.perfect}.

\subsection{Statistics on perfect matchings}
\label{sec.perfect.stats}

Let $\sigma\in \mathcal{I}_{2n}$ be a perfect matching on $[2n]$.
Each $\sigma$ has $n$ cycles of size $2$,
and each cycle has a cycle peak and a cycle valley.
Following Sokal and Zeng \cite{Sokal-Zeng_22}, 
we classify these according to their {\em parity} and their {\em record classification}
(by considering them as permutations of $[2n]$).
We then obtain the following eight statistics,
the following four for cycle peaks:
even cycle-peak antirecord (ecpar),
odd cycle-peak antirecord (ocpar),
even cycle-peak non-antirecord (ecpnar),
odd cycle-peak non-antirecord (ocpnar),
and the remaining four for cycle valleys:
even cycle-valley record (ecvr),
odd cycle-valley record (ocvr),
even cycle-valley non-record (ecvnr),
odd cycle-valley non-record (ocvnr).

After this, we introduce crossings and nestings on perfect matchings.
We think of $\sigma$ as a permutation and notice that for involutions
we always have $\ucross(\sigma) = \lcross(\sigma)$
and $\unest(\sigma) = \lnest(\sigma)$;
so we denote these quantities simply as $\cros(\sigma)$ and $\nes(\sigma)$, respectively.

We then further refine crossings and nestings as follows:
we say that a quadruplet $i<j<k<l$ 
is an {\em even} (resp. {\em odd}) crossing
if the index $j$ is even (resp. odd) and 
the quadruplet forms a (upper) crossing;
we use $\ecr(\sigma)$, and $\ocr(\sigma)$ to denote the number of even and odd crossings, respectively.
Similarly, we define even and odd nestings according to the 
the parity of the second index $j$ for quadruplets which form upper nestings;
we use $\ene(\sigma)$, and $\oone(\sigma)$ to denote the number of even and odd nestings, respectively.

\subsection{The Kasraoui--Zeng involution on perfect matchings}
\label{sec.KZ.perfect}

The Kasraoui--Zeng involution on set partitions can be restricted down to perfect matchings.
This restriction to perfect matchings exchanges the
number of crossings and nestings while keeping various other statistics fixed.
We now state it this involution via labelled Dyck paths.

Consider the set of $(\bfscra, \bfscrb)$-labelled  Dyck paths of length $2n$,
where
the labels $\xi_i$ lie in the sets
\begin{subeqnarray}
   \scra_h        & = &  \{0 \}  \qquad\quad\;\;\hbox{for $h \ge 0$}  \\
   \scrb_h        & = &  \{0,\ldots,  h-1 \}    \quad\hbox{for $h \ge 1$}  
 \label{def.abc.perfectm}
\end{subeqnarray}

Let $\dyckperfect_{n}$ denote this set of $(\bfscra, \bfscrb)$-labelled
Dyck paths of length $2n$ with label sets~(\ref{def.abc.perfectm}a,b).
The Kasraoui--Zeng bijection as per \cite{Sokal-Zeng_22}
is a bijective map $\SZbijperfect_{2n} : \mathcal{I}_{2n} \to \dyckperfect_{2n}$
(seen as the restriction of the bijection $\SZbij_{2n}:\Pi_{2n}\to \motzkinset_{2n}$ 
when there are no level steps).

Let $(\omega,\xi)\in \dyckperfect_{2n}$
where $\xi = (\xi_1,\ldots,\xi_{2n})$.
Let $\xicomplement = (\xicomplement_1,\ldots, \xicomplement_{n})$ be a sequence where
$\xicomplement_i$
is given by
\be
\xicomplement_i
\;=\;
\begin{cases}
        \xi_i = 0 &\qquad \text{if step $i$ is a rise}\\
         h-1 - \xi_i & \qquad \text{if step $i$ is a fall}\\
\end{cases}
\ee

It is clearly follows that $(\omega,\xicomplement)\in \dyckperfect_{2n}$
and that $(\xicomplement)^{{\rm c}} = \xi$.
Thus, the map $(\omega, \xi)\mapsto (\omega,\xicomplement)$
is an involution on the set $\dyckperfect_{2n}$;
we use $\phi_{2n}$ to denote this involution.
The Kasraoui--Zeng involution on perfect matchings
is the map $\bigl(\SZbijperfect_{2n}\bigr)^{-1} \circ \phi_{2n} \circ \SZbijperfect_{2n}$.

As in the set partition case,
we have for a perfect matching $\sigma$ with 
$\widehat{\sigma} \coloneqq \biggl(\bigl(\SZbijperfect_{2n}\bigr)^{-1} \circ \phi_{2n} \circ \SZbijperfect_{2n}\biggr)(\sigma)$,
we have
\be
(\cros(\sigma), \nes(\sigma))
\;=\;
(\nes(\widehat{\sigma}), \cros(\widehat{\sigma})) \;.
\ee

\bigskip

We now establish the number of fixed points of the Kasraoui--Zeng involution on perfect matchings.

\begin{lemma}
	The number of fixed points of $\mathcal{I}_{2n}$ under the action of the Kasraoui--Zeng involution
$\bigl(\SZbijperfect_{2n}\bigr)^{-1} \circ \phi_{2n} \circ \SZbijperfect_{2n}$
is $1$
and thus is generated by the continued fraction
\be
\cfrac{1}{1 - t }\;.
\ee
\label{lem.csp.fixed.KZ.perfect}
\end{lemma}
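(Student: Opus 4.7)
The plan is to transport the problem across the bijection $\SZbijperfect_{2n}$ and count the fixed points of the involution $\phi_{2n}$ on $\dyckperfect_{2n}$ directly. A labelled Dyck path $(\omega,\xi) \in \dyckperfect_{2n}$ is a fixed point of $\phi_{2n}$ iff each rise label equals its complement---which is automatic because $\scra_h = \{0\}$---and each fall label at starting height $h_{i-1} = h$ satisfies $\xi_i = h-1-\xi_i$. The latter forces $\xi_i = (h-1)/2$ and in particular forces $h$ to be odd.

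The crux of the argument is thus to show that the only Dyck path of length $2n$ whose fall steps all start at odd heights is $\omega = (UD)^n$. I would argue this by a short reachability observation: starting at height $0$ the path must first rise to height $1$, and if it ever reaches height $2$ then returning to height $0$ would require at some later moment a fall starting at height $2$, which is forbidden by the parity constraint. Hence $\omega$ stays within heights $\{0,1\}$, and the only Dyck path with this property is the zig-zag $(UD)^n$. On this path every fall starts at height $1$, and the forced label $(1-1)/2 = 0$ is the unique element of $\scrb_1 = \{0\}$; the rise labels are all $0$ by definition. So the fixed point of $\phi_{2n}$ is unique for every $n \ge 0$.

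It follows that $\mathcal{I}_{2n}$ has exactly one fixed point under the Kasraoui--Zeng involution for every $n \ge 0$, and the generating function of fixed points is $\sum_{n \ge 0} t^n = 1/(1-t)$, as claimed. Everything here is elementary, and the only step requiring any thought is the reachability observation ruling out Dyck paths that visit height~$2$ or above; there is no serious obstacle.
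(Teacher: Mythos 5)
Your proof is correct and is essentially the argument the paper intends: the paper leaves this lemma to the reader as being ``similar to Lemma~\ref{lem.csp.fixed.KZ}'', i.e.\ one counts the labelled Dyck paths fixed by $\phi_{2n}$, and the parity constraint $\xi_i = h-1-\xi_i$ on falls confines the path to heights $\{0,1\}$, forcing $(UD)^n$ with all labels $0$. Your explicit reachability observation is just the direct combinatorial form of the truncated continued fraction $1/(1-t)$ the paper invokes, so there is nothing to add.
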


The proof of this lemma is similar to that of Lemma~\ref{lem.csp.fixed.KZ} and hence, we leave it to the reader.



\subsection{The Chen--Deng--Du--Stanley--Yan (CDDSY) involution on perfect matchings}
\label{sec.cddsy.perfect}

In  \cite{Chen_07}, Chen et al showed that the bijection
$\CDDSY_n$ can be restricted to perfect matchings.
Now this bijection is to a subclass of vacillating tableaux,
called {\em oscillating tableaux},
which were studied by Sundaram in her PhD thesis \cite{Sundaram_86}.

We now recall the definition of oscillating tableaux.
\begin{definition}
An {\em oscillating tableaux} $T_{\lambda}^{n}$ of shape $\lambda$ 
and length $n$ is a sequence
$\emptyset=\lambda^0,\lambda^1,\lambda^2,\ldots, \lambda^{n} = \lambda$ of integer partitions
with the property that $\lambda^{i}$ is obtained 
either deleting or adding a square to $\lambda^{i-1}$.
\end{definition}

In terms of the $U, D$ and $I$ operators on the Young's lattice $\mathbb{Y}$,
we can restate this definition as follows:
given a $\lambda^{i-1}$ in the sequence of a vacillating tableaux,
the integer partition $\lambda^{i}$ is obtained by applying the operator 
$U + D$ to $\lambda^{i-1}$ and then picking one of the summands.

Let $\mathcal{OT}_{n}$ denote the set of all oscillating tableaux of shape $\emptyset$
and length $n$. 
The CDDSY bijection is a bijective map
$\CDDSY_{2n} : \mathcal{I}_{2n} \to \OT_{2n}$
(seen as the restriction of the bijection $\CDDSY_{2n}:\Pi_{2n}\to \VT_{2n}$ 
when the partition $\pi$ has no insiders or singletons and
all consecutive repeatitions are removed from the vacillating tableau).
Let $\tau_n : \OT_n$ be the involution on oscillating tableaux 
that is obtained by sending each integer partition to its conjugate.
Then the CDDSY involution for perfect matchings 
is the map
$\bigl(\CDDSY_{2n}\bigr)^{-1} \circ \tau_{2n} \circ \CDDSY_{2n}$.

We now show that the number of fixed points of the CDDSY involution
is also given by Lemma~\ref{lem.csp.fixed.KZ.perfect}.

\begin{lemma}
The number of fixed points of $\mathcal{I}_{2n}$ under the action of the 
CDDSY involution
$\bigl(\CDDSY_{2n}\bigr)^{-1} \circ \tau_{2n} \circ \CDDSY_{2n}$
is $1$
and thus is generated by the continued fraction
\be
\cfrac{1}{1 - t }\;.
\ee
\label{lem.csp.fixed.CDDSY.perfect}
\end{lemma}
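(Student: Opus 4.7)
The plan is to mimic the proof of Lemma~\ref{lem.csp.fixed.CDDSY}. Since $\CDDSY_{2n}$ is a bijection from $\scri_{2n}$ to $\OT_{2n}$, it suffices to count the fixed points of the conjugation involution $\tau_{2n}$ on $\OT_{2n}$. An oscillating tableau $(\lambda^0,\lambda^1,\ldots,\lambda^{2n})$ is fixed by $\tau_{2n}$ if and only if every partition $\lambda^i$ is self-conjugate.

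Next, I will analyse the admissible transitions. Two consecutive partitions $\lambda^{i-1}$ and $\lambda^i$ in an oscillating tableau differ by exactly one box (either added or removed), so they have different cardinalities of opposite parity. I will then observe the following rigidity: if a self-conjugate partition $\mu$ is obtained from a self-conjugate partition $\lambda$ by adding one box, then that box must lie on the main diagonal of $\mu$ (otherwise the conjugate of $\mu$ would differ from $\mu$ by the transposed box). Starting from $\emptyset$, the only self-conjugate partition reachable by adding a diagonal box is $(1)$; from $(1)$, adding a diagonal box would require filling position $(2,2)$, which is impossible since neither $(2,1)$ nor $(1,2)$ is a predecessor. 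Symmetric considerations apply to removal steps. Consequently, the only transitions between self-conjugate partitions are $\emptyset \leftrightarrow (1)$.

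It then follows that a fixed oscillating tableau of length $2n$ is forced to be the unique alternating sequence
\begin{equation}
\emptyset,\, (1),\, \emptyset,\, (1),\, \ldots,\, (1),\, \emptyset,
\end{equation}
giving exactly one fixed point for every $n \ge 0$. Summing over $n$ yields the generating function $\frac{1}{1-t}$, matching the statement.

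There is no real obstacle here: the combinatorial rigidity of self-conjugate partitions differing by a single box is straightforward, and the argument is strictly simpler than the set-partition case in Lemma~\ref{lem.csp.fixed.CDDSY} because oscillating tableaux have no ``stationary'' steps. The only subtlety worth spelling out is that, unlike the vacillating case where a bijection to Motzkin paths bounded by height one was needed, here the fixed-point locus is literally a single tableau, so no auxiliary bijection is required.
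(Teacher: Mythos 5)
Your proposal is correct and follows essentially the same route as the paper: both reduce to counting fixed points of $\tau_{2n}$ on oscillating tableaux and identify the unique fixed point as the alternating sequence $\emptyset, (1), \emptyset, \ldots, \emptyset$. The paper simply asserts this characterization ($\lambda^{2i}=\emptyset$, $\lambda^{2i-1}=(1)$) without proof, whereas you supply the short rigidity argument via self-conjugate partitions and diagonal boxes, which is a welcome (and correct) elaboration.
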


\begin{proof}
It suffices to count the number of fixed points of 
the involution $\tau_{2n}$.
Notice that an oscillating tableau 
$(\lambda^0,\ldots, \lambda^{2n})\in \OT_{2n}$
is a fixed point of $\tau_{2n}$ if and only if 
$\lambda^{2i} = \emptyset$ and $\lambda^{2i-1} = 1$.
This finishes the proof.
\end{proof}

\subsection{Sokal and Zeng's multivariate polynomials enumerating statistics on perfect matchings}
\label{sec.SZpoly.perfect}

In this section, we recall two of Sokal and Zeng's multivariate polynomials enumerating statistics on perfect matchings;
first we shall state \cite[eq.~(4.4)]{Sokal-Zeng_22} which does not have any $p,q$-statistics,
then we shall state their $p,q$-generalisation involving crossings and nestings
\cite[eq.~(4.22)]{Sokal-Zeng_22}.

The first polynomial is \cite[eq.~(4.4)]{Sokal-Zeng_22}
\begin{subeqnarray}
   M_n(x,y,u,v)
   & = &
   \sum_{\sigma \in \scri_{2n}}
      x^{\ecpar(\sigma)} y^{\ocpar(\sigma)}
         u^{\ecpnar(\sigma)}  v^{\ocpnar(\sigma)}
                        \slabel{eq.matching.fourvar.a}  \\[2mm]
   & = &
   \sum_{\sigma \in \scri_{2n}}
      x^{\ocvr(\sigma)} y^{\ecvr(\sigma)}
         u^{\ocvnr(\sigma)}  v^{\ecvnr(\sigma)}
                        \slabel{eq.matching.fourvar.b}
   \;.
 \label{eq.matching.fourvar}
\end{subeqnarray}
The equaltity between the two descriptions follows from applying reversal followed by complementation to $\sigma$.
The ordinary generating function of these polynomials have a nice Stieltjes-type continued fraction
\cite[eq.~(4.2), Theorem~4.1]{Sokal-Zeng_22}.

The second polynomial is \cite[eq.~(4.22)]{Sokal-Zeng_22}
\begin{subeqnarray}
   & &
   \!\!
   M_n(x,y,u,v,p_+,p_-,q_+,q_-)
         \nonumber \\[2mm]
   & & \qquad =\;
   \sum_{\sigma \in \scri_{2n}}
      x^{\ecpar(\sigma)} y^{\ocpar(\sigma)}
         u^{\ecpnar(\sigma)}  v^{\ocpnar(\sigma)}
         p_+^{\ocr(\sigma)} p_-^{\ecr(\sigma)}
         q_+^{\oone(\sigma)} q_-^{\ene(\sigma)}
     \qquad\qquad
      \\[2mm]
   & & \qquad =\;
   \sum_{\sigma \in \scri_{2n}}
      x^{\ocvr(\sigma)} y^{\ecvr(\sigma)}
         u^{\ocvnr(\sigma)}  v^{\ecvnr(\sigma)}
         p_+^{\ecr(\sigma)} p_-^{\ocr(\sigma)}
         q_+^{\ene(\sigma)} q_-^{\oone(\sigma)}
   \;,
 \slabel{def.matching.fourvar.pq.plusminus.b}
 \label{def.matching.fourvar.pq.plusminus}
\end{subeqnarray}
The equaltity between the two descriptions again follows from applying reversal followed by complementation to $\sigma$.
The ordinary generating function of these polynomials have a nice Stieltjes-type continued fraction
\cite[Theorem~4.4]{Sokal-Zeng_22}.

\subsection{Instances of the cyclic sieving phenomenon}
\label{sec.csp.perfect}

We are now ready to state instances of the cyclic sieving phenomena on perfect matchings
with respect to the Kasraoui--Zeng involution on perfect matchings
or the CDDSY involution on perfect matchings,
or as a matter of fact, 
for any involution having $1$ fixed point for every $n\geq 0$.

We first state a very general result in Proposition~\ref{prop.perfect.csp}.

\begin{proposition}
Let $\Stat{} : \mathcal{I}_{2n} \to \mathbb{N}$ be a statistic defined on perfect matchings.

\begin{itemize}
	\item[(a)] Assume that $\Stat{}$ is obtained as a specialisation of the polynomial $M_n(x,y,u,v)$ 
	defined in~\eqref{eq.matching.fourvar},
	i.e.,
	there exists polynomials $x(q), y(q),u(q),v(q) \in\mathbb{R}[q]$
	satisfying
	\be
	\sum_{\sigma\in \mathcal{I}_{2n}} q^{\Stat{}} 
		\;=\;
	M_n(x(q),y(q),u(q),v(q))
	\ee
	for all $n\geq 0$.
	Further assume that the following equations are satisfied
	\begin{subeqnarray}
		x(-1)
		&=&
		1
		\\
		y(-1) \,+\, v(-1)
                &=&
                0
	\label{eq.csp.cond.perfect}
	\end{subeqnarray}
	Then the statistic $\Stat{}$ exhibits the cyclic sieving phenomenon with respect to involutions 
	having $1$ fixed point for every $n\geq 0$.

	\item[(b)]
	Assume that $\Stat{}$ is obtained as a specialisation of the polynomial\\
	$M_n(x,y,u,v,p_+,p_-,q_+,q_-)$	
	defined in~\eqref{def.matching.fourvar.pq.plusminus},
        i.e.,
	there exists polynomials $x(q)$, $y(q)$, $u(q)$, $v(q)$, $p_+(q)$, $p_-(q)$, $q_+(q)$, $q_-(q) \in\mathbb{R}[q]$
        satisfying
        \be
        \sum_{\sigma\in \mathcal{I}_{2n}} q^{\Stat{}}
                \;=\;
		M_n(x(q),y(q),u(q),v(q),p_+(q),p_-(q),q_+(q),q_-(q))
        \ee
        for all $n\geq 0$.
        Further assume that the following equations are satisfied
        \begin{subeqnarray}
                x(-1)
                &=&
                1
                \\
		p_+(-1)\cdot y(-1) \,+\, q_{+}(-1)\cdot v(-1)
                &=&
                0
	\label{eq.csp.cond.pqgen.perfect}
        \end{subeqnarray}
        Then the statistic $\Stat{}$ exhibits the cyclic sieving phenomenon with respect to involutions having
	$1$ fixed point for every $n\geq 0$.

\end{itemize}
\label{prop.perfect.csp}
\end{proposition}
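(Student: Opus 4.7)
The plan is to follow the same template used for Propositions~\ref{prop.perm.csp} and~\ref{prop.setpart.csp}: specialize an appropriate Stieltjes-type continued fraction and check that, at $q=-1$, it collapses to the generating function $\frac{1}{1-t}$ of the constant sequence $1$, which by Lemmas~\ref{lem.csp.fixed.KZ.perfect} and~\ref{lem.csp.fixed.CDDSY.perfect} is precisely the generating function for the number of fixed points of both involutions under consideration. Because the triple $(X, C, f(q))$ satisfies the $q=-1$ phenomenon of Stembridge iff $f(1) = |X|$ and $f(-1)$ equals the number of fixed points, this collapse immediately yields the desired cyclic sieving instance.

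For part (a), I would start from the S-fraction for the ordinary generating function of $M_n(x,y,u,v)$ recorded in \cite[eq.~(4.2), Theorem~4.1]{Sokal-Zeng_22}, substitute $x \mapsto x(q)$, $y \mapsto y(q)$, $u \mapsto u(q)$, $v \mapsto v(q)$ into its weights, and then set $q=-1$. The S-fraction weights at heights $1$ and $2$ are (after the substitution) exactly the two expressions appearing on the left-hand sides of (\ref{eq.csp.cond.perfect}a,b), so conditions~\eqref{eq.csp.cond.perfect} force the first weight to equal $1$ and the second weight to equal $0$. A Stieltjes-type continued fraction truncates at the first vanishing weight, so the whole expression reduces to $\frac{1}{1-t}$, as required.

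For part (b), the argument is identical in structure but uses the richer $p,q$-refined S-fraction \cite[Theorem~4.4]{Sokal-Zeng_22}. Under the eight-variable substitution, the first weight again becomes $x(-1)$, which is forced to be $1$ by (\ref{eq.csp.cond.pqgen.perfect}a), while the second weight now involves the refined combination $p_+(-1) y(-1) + q_+(-1) v(-1)$, which is forced to be $0$ by (\ref{eq.csp.cond.pqgen.perfect}b). Truncation of the S-fraction again produces $\frac{1}{1-t}$ and the conclusion follows.

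The only non-mechanical step is matching the exact form of the weights in \cite[Theorems~4.1 and~4.4]{Sokal-Zeng_22} with the expressions in (\ref{eq.csp.cond.perfect}a,b) and (\ref{eq.csp.cond.pqgen.perfect}a,b); this is essentially a bookkeeping exercise and is the analogue of what was verified in the proofs of Propositions~\ref{prop.perm.csp}(a)--(c) and~\ref{prop.setpart.csp}(a)--(c). I do not expect any genuine obstacle beyond this conventions check, since the S-fraction truncates already after its second weight and there are no deeper levels to analyze.
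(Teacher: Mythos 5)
Your proposal is correct and follows essentially the same route as the paper: substitute the polynomial specializations into the Sokal--Zeng S-fractions \cite[Theorems~4.1 and~4.4]{Sokal-Zeng_22}, set $q=-1$, and observe that conditions~(\ref{eq.csp.cond.perfect}a,b) (resp.\ (\ref{eq.csp.cond.pqgen.perfect}a,b)) make the first weight $1$ and the second weight $0$, so the continued fraction truncates to $\frac{1}{1-t}$, matching the fixed-point generating function. Your added remark about the truncation mechanism is exactly the bookkeeping the paper leaves implicit, so nothing further is needed.
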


\begin{proof}
(a) We replace the variables in \cite[Theorem~4.1]{Sokal-Zeng_22} with  polynomials
$x_1(q)$, $x_2(q)$, $y_1(q)$, $y_2(q)$, $v_1(q)$, $v_2(q)$ 
and then set $q=-1$.
Using the conditions in~\eqref{eq.csp.cond.perfect}, 
it follows that the continued fraction~\cite[eq.~(4.2)]{Sokal-Zeng_22}
under this substitution becomes
\be
\sum_{n=0}^\infty \left. M_n(x(q),y(q),u(q),v(q)) \right|_{q=-1} t^n
\;=\;
\dfrac{1}{1-t }\;.
\ee

\medskip

The proof of (b) but now we use \cite[Theorem~4.4]{Sokal-Zeng_22}.
\end{proof}

As a consequence of Proposition~\ref{prop.perfect.csp},
we can state our main theorem for perfect matchings as an easy corollary:

\begin{theorem}[CSP for perfect matchings]
The following statistics on perfect matchings exhibit the cyclic sieving phenomenon
with respect to involutions having $1$ fixed point,
(in particular, with respect to the Kasraoui--Zeng involution on perfect matchings
or the CDDSY involution on perfect matchings):
\begin{quote}

odd cycle peak antirecord, odd cycle peak non-antirecord, even cycle valley record, even cycle valley non-record,

crossing, nesting, odd crossing, odd nesting, even crossing, even nesting

\end{quote}

\end{theorem}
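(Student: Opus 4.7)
The plan is to deduce the theorem directly from Proposition~\ref{prop.perfect.csp}, with the fixed-point counts for the two specific involutions supplied by Lemmas~\ref{lem.csp.fixed.KZ.perfect} and~\ref{lem.csp.fixed.CDDSY.perfect}. For each statistic on the list I would exhibit an explicit specialisation of either the four-variable polynomial $M_n(x,y,u,v)$ of~\eqref{eq.matching.fourvar} or the eight-variable polynomial $M_n(x,y,u,v,p_+,p_-,q_+,q_-)$ of~\eqref{def.matching.fourvar.pq.plusminus}, and then verify the corresponding hypothesis~\eqref{eq.csp.cond.perfect} or~\eqref{eq.csp.cond.pqgen.perfect}.

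For the four cycle-type statistics I use $M_n(x,y,u,v)$, setting a single variable equal to $q$ and the rest to $1$. Description~\eqref{eq.matching.fourvar.a} yields $\ocpar$ by taking $y=q$ and $\ocpnar$ by taking $v=q$; description~\eqref{eq.matching.fourvar.b} yields $\ecvr$ by taking $y=q$ and $\ecvnr$ by taking $v=q$. In each case $x(-1)=1$ holds trivially, while $y(-1)+v(-1)$ is either $(-1)+1$ or $1+(-1)$, so condition~\eqref{eq.csp.cond.perfect} is satisfied.

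For the six crossing/nesting statistics I use the $p,q$-polynomial~\eqref{def.matching.fourvar.pq.plusminus}. The total crossing $\cros=\ocr+\ecr$ is obtained from description~(a) by taking $p_+=p_-=q$ with all other variables set to $1$, and the total nesting $\nes$ comes analogously from $q_+=q_-=q$. The statistic $\ocr$ alone is obtained from description~(a) with $p_+=q$; the statistic $\ecr$ is obtained from description~\eqref{def.matching.fourvar.pq.plusminus.b} with $p_+=q$. The latter choice is legitimate because the bijective identity between the two descriptions already forces $\sum_{\sigma\in\scri_{2n}} q^{\ecr(\sigma)}=\sum_{\sigma\in\scri_{2n}} q^{\ocr(\sigma)}$, so both specialisations yield the same generating polynomial. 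The nesting statistics $\oone$ and $\ene$ are treated symmetrically, with $q_+$ playing the role of $p_+$. In every case condition~\eqref{eq.csp.cond.pqgen.perfect} reduces to a cancellation of the form $(-1)(1)+(1)(1)=0$ or $(1)(1)+(-1)(1)=0$.

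There is no genuine technical obstacle, since the continued-fraction identities and the fixed-point computations are already supplied by the cited results. The only point requiring a moment of care is selecting the correct description~(a) or~(b) of~\eqref{def.matching.fourvar.pq.plusminus} for each crossing/nesting statistic, so that the exponent of $q$ lands on one of the variables ($p_+$ or $q_+$) appearing in condition~\eqref{eq.csp.cond.pqgen.perfect}; otherwise the necessary sign cancellation fails and the hypothesis of Proposition~\ref{prop.perfect.csp}(b) cannot be verified.
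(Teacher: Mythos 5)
Your proposal is correct and is exactly the route the paper intends: the theorem is stated as an immediate corollary of Proposition~\ref{prop.perfect.csp}, with the fixed-point counts from Lemmas~\ref{lem.csp.fixed.KZ.perfect} and~\ref{lem.csp.fixed.CDDSY.perfect}, and your variable specialisations (including the switch between descriptions~(\ref{eq.matching.fourvar}a,b) and~(\ref{def.matching.fourvar.pq.plusminus}a,b) so that the $q$ always lands on $y$, $v$, $p_+$ or $q_+$) are precisely the verifications the paper leaves to the reader.
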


%

We conclude this section by a discussion on the {\em roof} statistic of Benaych-Georges, Cuenca and Gorin 
\cite[Section~4.1]{Benaych-Georges_22}
which exhibits the cyclic sieving phenomenon only when $n$ is even.

For a perfect matching $\sigma\in \mathcal{I}_{2n}$, we say that an index $i$ forms a non-crossing roof 
if $i$ is an opener and $\cros(i,\sigma) = 0$;
we use $\roof(\sigma)$ to denote the number of non-crossing roofs of $\sigma$
\cite[see footnote~27]{Sokal-Zeng_22}.
Thus, we can obtain the generating polynomial for $\roof(\sigma)$ as follows:
\be
\sum_{\sigma\in \mathcal{I}_{2n}} x^{\roof(\sigma)} 
\;=\;
M_n(x,x,1,1)
\;.
\label{eq.roof}
\ee
We immediately obtain the following result:

\begin{theorem}
The statistic $\roof(\sigma)$ exhibits the cyclic sieving phenomenon 
with respect to involutions having $1$ fixed point
(in particular, with respect to the Kasraoui--Zeng involution on perfect matchings
or the CDDSY involution on perfect matchings)
when $n$ is even.
\end{theorem}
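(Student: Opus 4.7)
The plan is to specialize the first Sokal--Zeng continued fraction for $M_n(x,y,u,v)$, as guaranteed by \cite[eq.~(4.2), Theorem~4.1]{Sokal-Zeng_22}, at $(x,y,u,v) = (-1,-1,1,1)$ and to argue that it truncates. First I would identify the first two Stieltjes weights $\alpha_1, \alpha_2$ by matching the low-order coefficients of the expansion. From $M_1 = x$ one reads $\alpha_1 = x$, and a direct enumeration of the three perfect matchings of $[4]$ gives $M_2(x,y,u,v) = x^2 + xy + xv$, so $\alpha_1 \alpha_2 = M_2 - M_1^2 = x(y + v)$ and hence $\alpha_2 = y + v$. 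At the target substitution one therefore has $\alpha_1 = -1$ and, critically, $\alpha_2 = -1 + 1 = 0$.

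Since $\alpha_2 = 0$ annihilates the entire subsequent tail of the Stieltjes continued fraction, the fraction collapses after a single level, yielding
\begin{equation*}
\sum_{n \geq 0} M_n(-1,-1,1,1)\, t^n \;=\; \cfrac{1}{1 - (-1)\, t} \;=\; \cfrac{1}{1 + t} \;=\; \sum_{n \geq 0} (-1)^n t^n,
\end{equation*}
so $M_n(-1,-1,1,1) = (-1)^n$. Via \eqref{eq.roof}, this identifies $\sum_{\sigma \in \mathcal{I}_{2n}} (-1)^{\roof(\sigma)}$ with $(-1)^n$, which equals $1$ precisely when $n$ is even.

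To conclude cyclic sieving I combine the two standard evaluations. At $q = 1$ the generating polynomial of $\roof$ gives $M_n(1,1,1,1) = (2n-1)!! = |\mathcal{I}_{2n}|$, and at $q = -1$ it gives $1$ when $n$ is even, matching the single fixed point of the Kasraoui--Zeng and CDDSY involutions on $\mathcal{I}_{2n}$ established in Lemmas~\ref{lem.csp.fixed.KZ.perfect} and~\ref{lem.csp.fixed.CDDSY.perfect}. This verifies Stembridge's $q = -1$ phenomenon for $\roof$ against any involution with one fixed point on $\mathcal{I}_{2n}$, in the even case. I do not expect any substantive obstacle beyond the low-order coefficient computation that pins down $\alpha_1$ and $\alpha_2$: the argument is a mild variant of Proposition~\ref{prop.perfect.csp}(a) in which the condition $x(-1) = 1$ is replaced by $x(-1) = -1$, and the parity restriction in the statement reflects exactly the resulting sign, since for odd $n$ one instead gets $-1$, which cannot equal a nonnegative count of fixed points.
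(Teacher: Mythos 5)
Your proposal is correct and follows essentially the same route as the paper: both arguments specialize the Sokal--Zeng S-fraction for $M_n$ (via \eqref{eq.roof}) at $x=-1$, observe that the vanishing of the second coefficient truncates the fraction to $\tfrac{1}{1+t}$, and compare $(-1)^n$ with the single fixed point from Lemmas~\ref{lem.csp.fixed.KZ.perfect} and~\ref{lem.csp.fixed.CDDSY.perfect}. The only cosmetic difference is that you re-derive $\alpha_1=x$ and $\alpha_2=y+v$ by matching $M_1$ and $M_2$, whereas the paper simply quotes the explicit coefficients $x, x+1, x+2, \ldots$ of \cite[Theorem~4.1]{Sokal-Zeng_22} under the substitution $y=x$, $u=v=1$.
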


\begin{proof}
By~\eqref{eq.roof},
we get that the generating function for this statistic can be expressed as a continued fraction 
\cite[Theorem~4.1]{Sokal-Zeng_22}:
\be
	\sum_{n=0}^\infty \sum_{\sigma\in \mathcal{I}_{2n}} x^{\roof(\sigma)}  t^n
	\;=\;
	\dfrac{1}{1-\dfrac{xt}{1-\dfrac{(x+1)t}{1-\dfrac{(x+2)t}{1 - \dfrac{(x+3)t}{1-\ldots}}}}}
	\;.
\ee
Plugging in $x=-1$ in this equation, the right-hand-side reduces to 
\be
\dfrac{1}{1+t}
\ee
which gives us the desired outcome.
\end{proof}

\section{D-permutations and its subfamilies}
\label{sec.dperm}

We begin by recalling the definitions of D-permutations and its subfamilies in Section~\ref{subsec.dperm.definitions}.
Then we introduce a new involution, {\em the Genocchi--Corteel involution}, in 
Section~\ref{sec.genocchi.corteel};
we also count its number of fixed points.
After this, in Section~\ref{subsec.dspoly}, 
we recall multivariate generating polynomials introduced by Deb and Sokal 
\cite{Deb-Sokal_genocchi}.
Finally, we state our results for D-permutations in Section~\ref{subsec.csp.dperm},
and its subfamilies (D-semiderangements and D-derangements)
in Section~\ref{subsec.csp.dperm.sub}.

\subsection{Genocchi, median Genocchi numbers, and D-permutations}
\label{subsec.dperm.definitions}

We follow the terminology and conventions in \cite{Deb-Sokal_genocchi} and in \cite{Deb_23}.

The Genocchi numbers \cite[A110501]{OEIS}
\be
   (g_n)_{n \ge 0}
   \;=\;
   1, 1, 3, 17, 155, 2073, 38227, 929569, 28820619, 1109652905,
   \ldots
\ee
are odd positive integers \cite{Lucas_1877,Barsky_81,Han_18}
\cite[pp.~217--218]{Foata_08}
defined by the exponential generating function
\be
   t \, \tan(t/2)
   \;=\;
   \sum_{n=0}^\infty g_n \, {t^{2n+2} \over (2n+2)!}
   \;.
\ee

The median Genocchi numbers (or Genocchi medians for short) 
\cite[A005439]{OEIS}
\begin{equation}
        (h_n)_{n \ge 0}
   \;=\;
   1, 1, 2, 8, 56, 608, 9440, 198272, 5410688, 186043904,
   \ldots
   \;.
\end{equation}
are defined by \cite[p.~63]{Han_99b}
\be
   h_n  \;=\;  \sum_{i=0}^{n-1} (-1)^i \, \binom{n}{2i+1} \, g_{n-1-i}
   \;.
 \label{eq.hn.binomgn}
\ee
See \cite[Sections~2.5,~2.6]{Deb-Sokal_genocchi} and references therein 
for continued fractions associated to the Genocchi and median Genocchi numbers.

The median Genocchi numbers enumerate a class of permutations called D-permutations (short for Dumont-like permutations),
they were introduced by Lazar and Wachs in \cite{Lazar_20,Lazar_22,Lazar_23}.
A permutation of $[2n]$ is called a D-permutation in case
$2k-1 \le \sigma(2k-1)$ and $2k \ge \sigma(2k)$ for all $k$,
i.e., it contains no even excedances and no odd anti-excedances.
Let us say also that a permutation is an
{\em e-semiderangement}\/ (resp.\ {\em o-semiderangement}\/)
in case it contains no even (resp.~odd) fixed points;
it is a {\em derangement}\/ in case it contains no fixed points at all.
A D-permutation that is also an
e-semiderangement (resp.\ o-semiderangement, derangement)
will be called a \textbfit{D-e-semiderangement}
(resp.\ \textbfit{D-o-semiderangement}, \textbfit{D-derangement}).
A D-permutation that contains exactly one cycle is called a \textbfit{D-cycle}. Notice that a D-cycle is also a D-derangement.
Let $\dperm_{2n}$
(resp.~$\dperm^{\rm e}_{2n}, \dperm^{\rm o}_{2n}, \dperm^{\rm eo}_{2n}, \dcycle_{2n}$)
denote the set of all D-permutations
(resp.\ D-e-semiderangements, D-o-semiderangements, D-derangements, D-cycles) of $[2n]$.
For instance,
\begin{subeqnarray}
   \dperm_2  & = & \{ 12 ,\, 21^{\rm eo} \}  \\[1mm]
   \dperm_4  & = & \{ 1234 ,\, 1243 ,\, 2134 ,\, 2143^{\rm eo} ,\,
                     3142^{\rm eo} ,\, 3241^{\rm o} ,\, 4132^{\rm e} ,\, 4231 \}\\[1mm]
\dcycle_2 & = & \{21\}\\[1mm]
\dcycle_4 & = & \{3142\}
\end{subeqnarray}
where $^{\rm e}$ denotes e-semiderangements that are not derangements,
$^{\rm o}$ denotes o-semi\-de\-range\-ments that are not derangements,
and $^{\rm eo}$ denotes derangements.

It is known \cite{Dumont_74,Dumont_94,Lazar_22,Lazar_20, Deb-Sokal_genocchi}
that
\begin{subeqnarray}
   |\dperm_{2n}|    & = &  h_{n+1}  \\[0.5mm]
   |\dperm^{\rm e}_{2n}|  \;=\; |\dperm^{\rm o}_{2n}|  & = &  g_n \\[1mm]
   |\dperm^{\rm eo}_{2n}|  & = &  h_n\\[0.5mm]
    |\dcycle_{2n}| & = & g_{n-1}
\end{subeqnarray}

\subsection{Deb--Sokal bijection and the Genocchi--Corteel involution}
\label{sec.genocchi.corteel}

Randrianarivony \cite{Randrianarivony_97}
introduced a bijection between D-o-semiderangements
and labelled Dyck paths 
which is in a similar essence to the Foata--Zeilberger bijection \cite{Foata_90};
he used this bijection
to obtain continued fractions
counting various statistics on D-o-semiderangements.
Motivated by Randrianarivony's bijection, Deb and Sokal \cite{Deb-Sokal_genocchi}
introduced two new bijections involving all D-permutations,
one of which extends Randrianarivony's bijection.

Both bijections of Deb and Sokal
(\cite[Sections~6.1-6.3]{Deb-Sokal_genocchi})
and 
\cite[Section~6.5]{Deb-Sokal_genocchi})
are correspondences between $\dperm_{2n}$ and the set of
$(\bfscra, \bfscrb,\bfscrc)$-labelled
0-Schr\"oder paths of length $2n$, where
the labels $\xi_i$ lie in the sets
\cite[eq.~(6.1)]{Deb-Sokal_genocchi}
\begin{subeqnarray}
   \scra_h        & = &  \{0,\ldots, \lceil h/2 \rceil \}  \qquad\quad\;\;\hbox{for $h \ge 0$}  \\
   \scrb_h        & = &  \{0,\ldots, \lceil (h-1)/2 \rceil \}    \quad\hbox{for $h \ge 1$}  \\
   \scrc_0        & = &  \{0\}       \\
   \scrc_h        & = &  \emptyset   \qquad\qquad\qquad\qquad\quad\hbox{for $h \ge 1$}
 \label{def.abc.dperm}
\end{subeqnarray}
Let $\schroderset_{2n}$ denote this set of $(\bfscra, \bfscrb,\bfscrc)$-labelled
0-Schr\"oder paths of length $2n$
with label sets~\eqref{def.abc.dperm}.

In this paper, we only focus on their first bijection which involves record classification
\cite[Sections~6.1-6.3]{Deb-Sokal_genocchi}.
Let $\DSbij_{2n} : \dperm_{2n} \to \schroderset_{2n}$
denote this bijection.

\bigskip

We now describe an involution which is the analog of the {\bf Corteel} involution in this setting;
we call it the {\bf Genocchi--Corteel} involution.

\noindent{\bf Description of the Genocchi--Corteel involution:}

Let $(\omega,\xi)\in \schroderset_{2n}$
where $\xi = (\xi_1,\ldots,\xi_{2n})$.
Let $\xicomplement = (\xicomplement_1,\ldots, \xicomplement_{2n})$ be a sequence where
$\xicomplement_i$
is given by
\be
\xicomplement_i
\;=\;
\begin{cases}
      \xi_i =  \lceil h/2 \rceil   &\qquad \text{if $i$ is even, step $i$  is a rise and $\xi_i = \lceil h/2 \rceil$}\\
      \lceil h/2 \rceil - 1 - \xi_i	 &\qquad \text{if $i$ is even and step $i$  is a rise and $\xi_i < \lceil h/2 \rceil $ }\\
      \lceil (h-1)/2 \rceil - \xi_i   & \qquad \text{if $i$ is even and step $i$ is a fall}\\
      \lceil h/2 \rceil - \xi_i   &\qquad \text{if $i$ is odd and step $i$ is a rise}\\
      \lceil (h-1)/2 \rceil -1- \xi_i &\qquad \text{if $i$ is odd, step $i$ is a fall 
      and $\xi_i < \lceil (h-1)/2 \rceil$}\\
      \xi_i =  \lceil (h-1)/2 \rceil   &\qquad \text{if $i$ is odd, step $i$ is a fall 
      and $\xi_i = \lceil (h-1)/2 \rceil$}\\
      \xi_i = 0 &\qquad \text{if step $i$ is a long level step at height $0$}
\end{cases}
\ee

It clearly follows that $(\omega,\xicomplement)\in \schroderset_{2n}$
and that $(\xicomplement)^{{\rm c}} = \xi$.
Thus, the map $(\omega, \xi)\mapsto (\omega,\xicomplement)$
is an involution on the set $\schroderset_{2n}$;
we use $\phi_{2n}$ to denote this involution.
We define the  {\bf Genocchi--Corteel involution on D-permutations}
to be the map 
\be
\phi^{{\rm GC}}_{2n}
\;\eqdef\;
\bigl(\DSbij_{2n}\bigr)^{-1} \circ \phi_{2n} \circ \DSbij_{2n}\;.
\ee
(We call it the Genocchi--Corteel involution as we think it is the analog of the Corteel involution for objects counted by the 
Genocchi and median--Genocchi numbers, we will expand on this in Proposition~\ref{prop.GC.exchange.dperm} 
and Corollary~\ref{cor.GC.subclasses}.)

We now state some properties of the involution $\phi^{{\rm GC}}_{2n}$;
in particular, we will show that the cycle classification is preserved under this involution but the 
crossings and nestings are exchanged.

\begin{proposition}
Let $\sigma\in \dperm_{2n}$ be a D-permutation on $2n$ letters.
The following holds true under the Genocchi--Corteel involution:
\begin{itemize}

	\item If $i$ is a cycle valley (cycle double rise resp.)
	of $\sigma$ then then $i$ is also a cycle valley (cycle double rise)
	of $\GCsigma$,
		and furthermore, $\unest(i,\sigma) = \ucross(i,\GCsigma)$.

	\item If $i$ is a cycle peak (cycle double fall resp.)
	of $\sigma$ then then $i$ is also a cycle peak (cycle double fall)
	of $\GCsigma$,
                and furthermore, $\lnest(i,\sigma) = \lcross(i,\GCsigma)$.



	\item If $i$ is an even fixed point (odd fixed point resp.)
	of $\sigma$ then $i$ is also an even fixed point (odd fixed point)
	of $\GCsigma$,
                and furthermore, $\psnest(i,\sigma) = \psnest(i,\GCsigma))$.

		
\end{itemize}
\label{prop.GC.exchange.dperm}
\end{proposition}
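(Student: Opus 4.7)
The plan is to transfer the claim via the Deb--Sokal bijection $\DSbij_{2n}\colon \dperm_{2n}\to \schroderset_{2n}$, under which $\phi^{{\rm GC}}_{2n}$ is by definition the conjugate of the label-complementation map $\phi_{2n}$. The first key observation is that $\phi_{2n}$ leaves the underlying Schr\"oder path $\omega$ unchanged and modifies only the label sequence $\xi$, so any feature of $\sigma\in\dperm_{2n}$ encoded purely by $\omega$ is automatically preserved by $\phi^{{\rm GC}}_{2n}$.

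For the first (cycle-type) half of each of the three bullets, I would invoke the correspondence established in \cite[Sections~6.1--6.3]{Deb-Sokal_genocchi}: the cycle type of an index $i\in[2n]$ in $\sigma$ (cycle valley, peak, double rise, double fall, or even/odd fixed point) is determined entirely by which kind of step of $\omega=\DSbij_{2n}(\sigma)$ the abscissa $i$ belongs to, together with the parity of $i$. Since $\phi_{2n}$ fixes $\omega$, the cycle type of every index is preserved, which gives the cycle-type half of each bullet, including the parity of fixed points.

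For the crossing-nesting equalities I would use the standard property of Randrianarivony- and Deb--Sokal-style labels: at a rise step whose starting abscissa is $i$, the label $\xi_i$ records $\unest(i,\sigma)$ while its ``complement'' within the admissible label set records $\ucross(i,\sigma)$, and analogously at a fall step the label and its complement encode $\lnest(i,\sigma)$ and $\lcross(i,\sigma)$. The slightly asymmetric prescription for $\phi_{2n}$---holding the extremal label fixed in the even-rise and odd-fall cases---is precisely the label-complementation that swaps the nesting count with the crossing count at every non-level step. For pseudo-nestings at a fixed point $i$, the corresponding long level step at height~$0$ has the unique admissible label $0$ (since $\scrc_0=\{0\}$), so $\phi_{2n}$ acts trivially there; because $\psnest(i,\sigma)$ is determined by the heights of $\omega$ adjacent to that step and $\omega$ is unchanged, the equality $\psnest(i,\sigma)=\psnest(i,\GCsigma)$ is automatic. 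The main obstacle I anticipate is a careful case-check against the conventions of \cite[Section~6.1]{Deb-Sokal_genocchi} confirming that the labels really do record nestings (rather than crossings), and that the ``pinned'' extremal labels in the even-rise and odd-fall cases are precisely the ones with no crossing partner; this is bookkeeping work but is where the content of the proof lies.
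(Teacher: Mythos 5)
There is a genuine gap in the cycle-type half of your argument. In the Deb--Sokal bijection the cycle classification of an index is \emph{not} determined by the kind of step at abscissa $i$ together with the parity of $i$: even fixed points and cycle double falls are both encoded by rises at even abscissas, and odd fixed points and cycle double rises are both encoded by falls at odd abscissas; the two possibilities are distinguished only by whether the label takes its extremal value ($\xi_i=\lceil h/2\rceil$, resp.\ $\xi_i=\lceil (h-1)/2\rceil$). Concretely, $3142$ and $3241$ in $\dperm_4$ are carried to the same path $UUDD$ (the four paths of length $4$ other than $UUDD$ admit a unique labelling and must host $1234$, $1243$, $2134$, $2143$), yet index $2$ is a cycle double fall in $3142$ and an even fixed point in $3241$. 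So ``$\omega$ is unchanged'' does not by itself preserve cycle types; one must also use that the involution \emph{pins} exactly those extremal labels --- this is the purpose of the two special cases in the definition of $\xicomplement$, and it is why the paper's proof combines the $\omega$-dependent sets $F,G,F',G'$ of Deb--Sokal with the observation that $\xi_i=\xicomplement_i$ at every index corresponding to a fixed point. Your reading of the pinned cases as ``labels with no crossing partner'' misses their actual role: they are the fixed-point encoding, and without invoking them your cycle-type argument fails precisely where the content lies.

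The same misreading affects the third bullet: it is false that every fixed point corresponds to a long level step at height $0$; only suitable pairs of consecutive fixed points do (these are what receive weight $\we\wo$ in Theorem~\ref{thm.fixedpoint.GC}), while a general even (odd) fixed point sits at an even rise (odd fall) of positive height with extremal label, and its $\psnest$ value is read off from that height. Hence $\psnest(i,\sigma)=\psnest(i,\GCsigma)$ again needs both that $\omega$ is unchanged and that the extremal label is pinned; it is not ``automatic'' from $\scrc_0=\{0\}$. For the crossing/nesting half your plan is in the same spirit as the paper, which simply quotes the definition of $\xicomplement$ together with Lemmas~6.2 and~6.4 of Deb--Sokal; but be aware that the bookkeeping is governed by the parity of $i$ rather than by rise versus fall: odd-position steps (cycle valleys at rises, cycle double rises at falls) carry the upper statistics $\unest/\ucross$, while even-position steps (cycle double falls at rises, cycle peaks at falls) carry the lower statistics $\lnest/\lcross$, so your proposed assignment of $\unest$ to all rises and $\lnest$ to all falls is wrong in half of the cases and would not survive the case-check you defer.
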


\begin{proof}
As the Schr\"oder path $\omega$ stays the same under the involution,
the sets $F,G,F',G'$ introduced in 
\cite[Section~6.3]{Deb-Sokal_genocchi},
which only depend on $\omega$ are the same for both $\sigma$ as well as $\GCsigma$.
This along with the fact that $\xi_i=\xicomplement_i$ whenever
$i$ corresponds to (even or odd) fixed point
shows that the cycle types are preserved under the involution $\phi^{{\rm GC}}_{2n}$.

The exchange of crossing and nestings follows from the definition of 
$\xicomplement$ and 
\cite[Lemmas~6.2 and~6.4]{Deb-Sokal_genocchi}.
\end{proof}

As a corollary of Proposition~\ref{prop.GC.exchange.dperm}, we immediately obtain the following:

\begin{corollary}
The Genocchi--Corteel $\phi^{{\rm GC}}_{2n}$  involution 
on the set of D-permutations $\dperm_{2n}$ can be restricted down to involutions on the subclasses 
of D-e-semiderangements 
\[\left.\phi^{{\rm GC}}_{2n}\right|_{\dperm_{2n}^{{\rm e}}} : \dperm_{2n}^{{\rm e}} \to \dperm_{2n}^{{\rm e}},\]
D-o-semiderangements 
\[\left.\phi^{{\rm GC}}_{2n}\right|_{\dperm_{2n}^{{\rm o}}} : \dperm_{2n}^{{\rm o}} \to \dperm_{2n}^{{\rm o}},\]
and D-derangements 
\[\left.\phi^{{\rm GC}}_{2n}\right|_{\dperm_{2n}^{{\rm eo}}} : \dperm_{2n}^{{\rm eo}} \to \dperm_{2n}^{{\rm eo}}.\] 
For each of these subclasses, their respective involutions preserve the cycle classification of indices,
and exchange crossings and nestings statistics 
as mentioned in Proposition~\ref{prop.GC.exchange.dperm}.
\label{cor.GC.subclasses}
\end{corollary}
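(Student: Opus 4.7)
The plan is to deduce this corollary almost directly from Proposition~\ref{prop.GC.exchange.dperm}, since once one has the subset of $\dperm_{2n}$ defining each subclass characterized by the presence or absence of fixed points of specified parities, invariance of these subsets under $\phi^{{\rm GC}}_{2n}$ is an immediate consequence of the third bullet of that proposition, which asserts that even (resp.\ odd) fixed points of $\sigma$ are mapped to even (resp.\ odd) fixed points of $\GCsigma$ at the same index. More precisely, $\phi^{{\rm GC}}_{2n}$ is already an involution on $\dperm_{2n}$ by construction (it is the conjugate of $\phi_{2n}$ by the bijection $\DSbij_{2n}$), so restriction to any $\phi^{{\rm GC}}_{2n}$-invariant subset is automatically an involution on that subset. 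Hence the entire task reduces to verifying invariance of the three subsets $\dperm_{2n}^{\rm e}, \dperm_{2n}^{\rm o}, \dperm_{2n}^{\rm eo}$.

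First I would argue invariance of $\dperm_{2n}^{\rm e}$: if $\sigma$ has no even fixed points, then by Proposition~\ref{prop.GC.exchange.dperm} an even index $i$ is an even fixed point of $\GCsigma$ only if it is an even fixed point of $\sigma$; since there are none, $\GCsigma\in\dperm_{2n}^{\rm e}$. The argument for $\dperm_{2n}^{\rm o}$ is identical with ``even'' replaced by ``odd''. For $\dperm_{2n}^{\rm eo} = \dperm_{2n}^{\rm e}\cap\dperm_{2n}^{\rm o}$, invariance follows from the intersection of the two previous cases (or directly from both clauses of the third bullet applied simultaneously).

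For the final claim concerning preservation of cycle classification and exchange of crossings and nestings on each subclass, this is simply Proposition~\ref{prop.GC.exchange.dperm} restated verbatim: since the conclusions of that proposition hold for every $\sigma\in\dperm_{2n}$, they hold a fortiori for every $\sigma$ in any subset, and in particular the equalities $\unest(i,\sigma)=\ucross(i,\GCsigma)$, $\lnest(i,\sigma)=\lcross(i,\GCsigma)$, and $\psnest(i,\sigma)=\psnest(i,\GCsigma)$ carry over unchanged.

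There is essentially no obstacle; the only point that needs a sentence of care is observing that ``restriction of an involution to an invariant subset is an involution,'' but this is immediate because the invariance shown above means the restricted map is well-defined and of course still squares to the identity. I would write the whole argument in under half a page.
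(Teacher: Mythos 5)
Your proposal is correct and matches the paper's treatment, which likewise derives the corollary immediately from Proposition~\ref{prop.GC.exchange.dperm} with no further argument. The only point worth a word of care is that the third bullet literally gives ``fixed point of $\sigma$ $\Rightarrow$ fixed point of $\GCsigma$'' rather than the converse you invoke, but the converse follows at once either from the involutive property of $\phi^{{\rm GC}}_{2n}$ (apply the bullet to $\GCsigma$) or from the fact that all other cycle classes are preserved at each index, so your deduction goes through.
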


\bigskip

\noindent {\bf Fixed points of the Genocchi--Corteel involution:}

%
Let $\GCdperm_{2n}$ denote the set of fixed points of the Genocchi--Corteel involution, i.e.,
\be
\GCdperm_{2n}
\;\eqdef\;
\{\sigma\in \dperm_{2n} \:\colon\: \GCsigma =\sigma \}\;.
\ee
Similarly, let 
$\GCdperm_{2n}^{{\rm e}}$,
$\GCdperm_{2n}^{{\rm o}}$,
and
$\GCdperm_{2n}^{{\rm eo}}$
denote the sets of fixed points of the Genocchi--Corteel involution restricted to the subclasses
$\dperm_{2n}^{{\rm e}}$,
$\dperm_{2n}^{{\rm o}}$,
and
$\dperm_{2n}^{{\rm eo}}$,
respectively.
We will now compute the cardinality of these sets. 

Define the following polynomial counting even and odd fixed points on the set $\GCdperm_{2n}$:
\be
\GCP_n(\we,\wo)
\;=\;
\sum_{\sigma\in\GCdperm_{2n}}
\we^{\evenfix(\sigma)}
\wo^{\oddfix(\sigma)}
\;.
\label{eq.def.fixedpoint.GC}
\ee

The ordinary generating functions of the polynomials $\GCP_n$ is a nice rational function:
\begin{theorem}[Ordinary generating function for fixed points of the Genocchi--Corteel involution]
The ordinary generating function of the polynomials~\eqref{eq.def.fixedpoint.GC}
is given by
\be
\sum_{n=0}^\infty
\GCP_n(\we,\wo)
\;=\;
\dfrac{1}{1- \we \wo t - \dfrac{t}{1-(1+\we)(1+\wo)t}}\;.
\label{eq.thm.fixedpoint.GC}
\ee
\label{thm.fixedpoint.GC}
\end{theorem}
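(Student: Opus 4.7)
The plan is to use the Deb--Sokal bijection $\DSbij_{2n}\colon \dperm_{2n} \to \schroderset_{2n}$, which conjugates $\phi^{{\rm GC}}_{2n}$ to the label-complementation $\phi_{2n}$, to recast $\sum_{n\ge 0}\GCP_n(\we,\wo)\,t^n$ as a weighted count of labelled $0$-Schr\"oder paths $(\omega,\xi) \in \schroderset_{2n}$ satisfying $\xi = \xicomplement$, and then to apply Flajolet's combinatorial theory of continued fractions. A preliminary observation is that, for any path in $\schroderset_{2n}$, the step parity is pinned to the starting height by $i \equiv h_{i-1}+1 \pmod{2}$; this is proved by induction using that rises and falls shift both $i$ and the height by $1$, while long level steps (which occur only at height $0$) advance $i$ by $2$ and leave the height at $0$. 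In particular, the ``even $i$ / odd $i$'' distinction in the piecewise definition of $\xicomplement_i$ is determined by the starting height alone, so the condition $\xi_i = \xicomplement_i$ at a step of given type from given starting height is independent of the rest of the path, which is exactly the setting in which Flajolet's theorem applies.

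Next I will perform a direct case analysis on the six relevant kinds of steps, attaching to each fixed label a weight of $\we$, $\wo$, or $1$ according to whether $\DSbij_{2n}$ interprets it as an even fixed point, an odd fixed point, or neither. I expect to find that the unique label of a long level step at height $0$ is always fixed and encodes a pair (odd fixed point, even fixed point), contributing $\we\wo$; the unique label of a rise from height $0$ is fixed and contributes $1$; at a rise from height $1$ (an even-$i$ step, with labels $\{0,1\}$) both labels are fixed, with the ``special'' label $\xi = 1 = \lceil 1/2\rceil$ carrying weight $\we$, yielding total weight $1+\we$; at a rise from height $2$ (an odd-$i$ step, with labels $\{0,1\}$) no label is fixed, because the involution $\xi \mapsto 1-\xi$ swaps them, giving weight $0$; and, symmetrically, the falls from heights $1$ and $2$ contribute $1$ and $1+\wo$, respectively, with the ``special'' label $\xi = 1 = \lceil (2-1)/2\rceil$ at the odd-$i$ fall from height $2$ encoding an odd fixed point.

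Because the weight of the rise from height $2$ vanishes, every fixed labelled path remains within heights $\{0,1,2\}$, and Flajolet's theorem produces a Jacobi-type continued fraction of depth two with coefficients $\gamma_0 = \we\wo$, $\beta_1 = 1\cdot 1 = 1$, $\gamma_1 = 0$, $\beta_2 = (1+\we)(1+\wo)$, and $\beta_3 = 0$. After the standard substitution $t^2 \to t$ (reflecting that rises-plus-falls and long level steps each span two units of length while $t^n$ tracks D-permutations of $[2n]$), the continued fraction collapses to the right-hand side of~\eqref{eq.thm.fixedpoint.GC}.

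The main obstacle will be to justify the claim, which I have checked by hand for $n \le 2$ against the full enumeration of $\dperm_4$, that the ``special'' labels $\xi_i = \lceil h/2\rceil$ at even-$i$ rises and $\xi_i = \lceil (h-1)/2\rceil$ at odd-$i$ falls correspond under $\DSbij_{2n}$ precisely to even and odd fixed points of $\sigma$, respectively. This reduces to a careful reading of \cite[Section~6]{Deb-Sokal_genocchi} to pin down how isolated fixed points of $\sigma$ (those not paired with an adjacent fixed point via a long level step at height $0$) are encoded in the labelled $0$-Schr\"oder path; once this is in hand, the rest of the argument is a routine Flajolet-style calculation.
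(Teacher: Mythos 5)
Your proposal is correct and is essentially the paper's own proof: the paper likewise transfers the count to labelled $0$-Schr\"oder paths via $\DSbij_{2n}$, assigns weight $\we\wo$ to long level steps at height $0$, weight $\we$ to the label $\xi_i=1$ on rises from height $1$ and $\wo$ to the label $\xi_i=1$ on falls to height $1$ (weight $1$ otherwise), observes that certain steps admit no self-complementary label so that fixed labelled paths are height-bounded, and then invokes the Thron-type continued-fraction machinery to obtain \eqref{eq.thm.fixedpoint.GC}. Your explicit case analysis (rises starting at height $2$ carry weight $0$, hence fixed paths stay within heights $\{0,1,2\}$) is the sharp form of the paper's height-$3$ bound and is what actually truncates the fraction at depth two, and the correspondence you flag as the remaining obstacle --- top labels on even rises and odd falls encoding even and odd fixed points of $\sigma$, with the long level step at height $0$ encoding an adjacent odd--even pair --- is precisely the property of the Deb--Sokal bijection that the paper also uses without further justification.
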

\begin{proof}
We will enumerate labels $\xi$ such that $\xi = \xicomplement$
while assigning the following weights to the steps $s_i$ of $\omega$:
\be
	\wt(s_i)
	\;=\;
\begin{cases}
	\we\cdot \wo
	\quad \text{if $s_i$ is a long level step at height $0$}\\
	\we
	\qquad \text{if $s_i$ is a rise from height $1$ with label $\xi_i = 1$}\\
	\wo
	\qquad \text{if $s_i$ is a fall to height $1$ with label $\xi_i = 1$}\\
	1 
	\qquad \text{otherwise}
\end{cases}
\ee

Notice that if for some index $i$, step $i$ is a rise starting at height $3$
or a fall ending at height $3$,
we can never have $\xi_i = \xicomplement_i$.
Therefore, the fixed points of the involution $\phi_{2n}$
are exactly the pairs $(\omega, \xi)$
where the $0$-Schr\"oder path $\omega$ does not go above height $3$.
Using general theory of combinatorics of Thron-type continued fractions, 
we obtain the continued fraction~\reff{eq.thm.fixedpoint.GC}.
\end{proof}


\medskip

We will finally substitute
$(\we,\wo) = (1,1)$, 
$(\we,\wo) = (1,0)$, $(\we,\wo) = (0,1)$, and $(\we,\wo) = (0,0)$,
in Theorem~\ref{thm.fixedpoint.GC}
to obtain the number of fixed points under the action of the Genocchi–Corteel involution
on D-permutations and its subclasses; we state this in the following corollary.

\begin{corollary}
\begin{itemize}

\item[(a)] The cardinality of the set $\GCdperm_{2n}$ is given by the sequence \cite[A154626]{OEIS}
\be
	\left|\GCdperm_{2n} \right|
	\;=\;
	2^n F_{2n-1}\;.
\ee

\item[(b)] The cardinality of the sets $\GCdperm_{2n}^{{\rm e}}$ and $\GCdperm_{2n}^{{\rm o}}$
	are given by the sequence $ \cite[A133494]{OEIS}$
\begin{eqnarray}
	\left|\GCdperm_{2n}^{{\rm e}} \right|
	\;=\;
	\left|\GCdperm_{2n}^{{\rm o}} \right|
	\;=\;
	\begin{cases}
	1 \qquad \text{for $n=0$}\\
	3^{n-1} \quad \text{for $n\geq 1$}
	\;.
	\end{cases}
\end{eqnarray}

\item[(c)] The cardinality of the set $\GCdperm_{2n}^{{\rm eo}}$ is given by the sequence \cite[A011782]{OEIS}
\be
	\left|\GCdperm_{2n}^{{\rm eo}} \right|
	\;=\;
	\begin{cases}
        1 \qquad \text{for $n=0$}\\
	2^{n-1}
	\quad \text{for $n\geq 1$}
	\end{cases}
	\;.
\ee

\end{itemize}
\label{lem.genocchi.corteel.fixed.points}
\end{corollary}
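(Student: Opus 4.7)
The plan is to deduce all three cardinalities directly from Theorem~\ref{thm.fixedpoint.GC} by specializing $(\we,\wo)$. By Corollary~\ref{cor.GC.subclasses}, $\phi^{\rm GC}_{2n}$ restricts to the subclasses $\dperm_{2n}^{\rm e}$, $\dperm_{2n}^{\rm o}$ and $\dperm_{2n}^{\rm eo}$, so the fixed-point sets of the restricted involutions are exactly the subsets of $\GCdperm_{2n}$ with no even fixed points, no odd fixed points, or neither. Since $\GCP_n(\we,\wo)$ was constructed precisely to track even and odd fixed points, these three subsets are enumerated respectively by $\GCP_n(0,1)$, $\GCP_n(1,0)$ and $\GCP_n(0,0)$; the full set $\GCdperm_{2n}$ corresponds to $\GCP_n(1,1)$.

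For part (a), substituting $\we = \wo = 1$ into~\eqref{eq.thm.fixedpoint.GC} collapses the Thron-type continued fraction to the rational function $(1-4t)/(1 - 6t + 4t^2)$. Its Taylor coefficients $(a_n)$ satisfy the linear recursion $a_n = 6 a_{n-1} - 4 a_{n-2}$ for $n\geq 2$ with initial conditions $a_0 = 1$ and $a_1 = 2$. I would then check that $2^n F_{2n-1}$ obeys the same recursion with the same initial conditions: multiplying the odd-Fibonacci recurrence $F_{2n+1} = 3 F_{2n-1} - F_{2n-3}$ by $2^{n+1}$ yields precisely $a_{n+1} = 6 a_n - 4 a_{n-1}$, and the base cases $F_{-1} = F_1 = 1$ match $a_0, a_1$. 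For part (b), each of the substitutions $(\we,\wo) = (0,1)$ and $(1,0)$ reduces~\eqref{eq.thm.fixedpoint.GC} to the same rational function $(1-2t)/(1-3t)$, whose Taylor expansion is $1 + \sum_{n\geq 1} 3^{n-1} t^n$. For part (c), setting $\we = \wo = 0$ reduces~\eqref{eq.thm.fixedpoint.GC} to $(1-t)/(1-2t)$, with Taylor expansion $1 + \sum_{n\geq 1} 2^{n-1} t^n$.

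There is no serious obstacle in this argument: once Theorem~\ref{thm.fixedpoint.GC} is in hand, the corollary is a short specialization exercise. The only part that requires any verification beyond a one-line rational-function simplification is the recurrence identification in (a), and this is immediate from the standard three-term recurrence for the odd Fibonacci numbers.
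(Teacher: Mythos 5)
Your proposal is correct and follows exactly the paper's route: the paper likewise obtains the corollary by substituting $(\we,\wo)=(1,1),(1,0),(0,1),(0,0)$ into Theorem~\ref{thm.fixedpoint.GC} and reading off the resulting rational generating functions, with the restriction to the subclasses justified by Corollary~\ref{cor.GC.subclasses}. Your explicit simplifications $(1-4t)/(1-6t+4t^2)$, $(1-2t)/(1-3t)$, $(1-t)/(1-2t)$ and the recurrence check $a_{n+1}=6a_n-4a_{n-1}$ for $2^nF_{2n-1}$ are accurate details that the paper leaves implicit.
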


\subsection{Deb and Sokal's multivariate polynomials enumerating statistics on D-permutations and its subclasses}
\label{subsec.dspoly}

In this section, we recall two of Deb and Sokal's multivariate polynomials enumerating
statistics on  D-permutations and its subclasses;
first we shall state first polynomial \cite[eq.~(3.2)]{Deb-Sokal_genocchi}
then we shall state their first $p,q$-generalisation involving crossings and nestings \cite[eq.~(3.21)]{Deb-Sokal_genocchi}.
As in Section~\ref{sec.cf.permutations},
we will use the glossary of permutation statistics 
as stated in \cite[section~2.5]{Deb_thesis};
this includes the record-and-cycle classification 
\cite[section~2.5.1]{Deb_thesis},
and crossings~and~nestings
\cite[section~2.5.2]{Deb_thesis}.

We now introduce a polynomial in 12 variables that enumerates D-permutations
according to the parity-refined record-and-cycle classification:
\begin{eqnarray}
   & &
   P_n^{(1)}(x_1,x_2,y_1,y_2,u_1,u_2,v_1,v_2,\we,\wo,\ze,\zo)
   \;=\;
       \nonumber \\[4mm]
   & & \qquad\qquad
   \sum_{\sigma \in \dperm_{2n}}
   x_1^{\eareccpeak(\sigma)} x_2^{\eareccdfall(\sigma)}
   y_1^{\ereccval(\sigma)} y_2^{\ereccdrise(\sigma)}
   \:\times
       \qquad\qquad
       \nonumber \\[-1mm]
   & & \qquad\qquad\qquad\:
   u_1^{\nrcpeak(\sigma)} u_2^{\nrcdfall(\sigma)}
   v_1^{\nrcval(\sigma)} v_2^{\nrcdrise(\sigma)}
   \:\times
       \qquad\qquad
       \nonumber \\[3mm]
   & & \qquad\qquad\qquad\:
   \we^{\evennrfix(\sigma)} \wo^{\oddnrfix(\sigma)}
   \ze^{\evenrar(\sigma)} \zo^{\oddrar(\sigma)}
   \;.
 \label{def.Pn}
\end{eqnarray}
The polynomials \reff{def.Pn} have a beautiful T-fraction \cite[Theorem~3.3]{Deb-Sokal_genocchi}.

Our second polynomial is a polynomial in 22 variables
that generalizes \reff{def.Pn} by including
four pairs of $(p,q)$-variables
corresponding to the four refined types of crossings and nestings
as well as two variables corresponding to pseudo-nestings of fixed points:
\begin{eqnarray}
   & &
   \hspace*{-14mm}
   P_n^{(2)}(x_1,x_2,y_1,y_2,u_1,u_2,v_1,v_2,\we,\wo,\ze,\zo,p_{-1},p_{-2},p_{+1},p_{+2},q_{-1},q_{-2},q_{+1},q_{+2},\se,\so)
   \;=\;
   \hspace*{-1cm}
       \nonumber \\[4mm]
   & & \qquad\qquad
   \sum_{\sigma \in \dperm_{2n}}
   x_1^{\eareccpeak(\sigma)} x_2^{\eareccdfall(\sigma)}
   y_1^{\ereccval(\sigma)} y_2^{\ereccdrise(\sigma)}
   \:\times
       \qquad\qquad
       \nonumber \\[-1mm]
   & & \qquad\qquad\qquad\:
   u_1^{\nrcpeak(\sigma)} u_2^{\nrcdfall(\sigma)}
   v_1^{\nrcval(\sigma)} v_2^{\nrcdrise(\sigma)}
   \:\times
       \qquad\qquad
       \nonumber \\[3mm]
   & & \qquad\qquad\qquad\:
   \we^{\evennrfix(\sigma)} \wo^{\oddnrfix(\sigma)}
   \ze^{\evenrar(\sigma)} \zo^{\oddrar(\sigma)}
   \:\times
       \qquad\qquad
       \nonumber \\[3mm]
   & & \qquad\qquad\qquad\:
   p_{-1}^{\lcrosscpeak(\sigma)}
   p_{-2}^{\lcrosscdfall(\sigma)}
   p_{+1}^{\ucrosscval(\sigma)}
   p_{+2}^{\ucrosscdrise(\sigma)}
          \:\times
       \qquad\qquad
       \nonumber \\[3mm]
   & & \qquad\qquad\qquad\:
   q_{-1}^{\lnestcpeak(\sigma)}
   q_{-2}^{\lnestcdfall(\sigma)}
   q_{+1}^{\unestcval(\sigma)}
   q_{+2}^{\unestcdrise(\sigma)}
          \:\times
       \qquad\qquad
       \nonumber \\[3mm]
   & & \qquad\qquad\qquad\:
   \se^{\epsnest(\sigma)}
   \so^{\opsnest(\sigma)}
 \label{def.Pn.pq}
\end{eqnarray}
where
\be
   \epsnest(\sigma)
   \;=\;
   \sum_{i \in \Evenfix} \psnest(i,\sigma)
   \;,\qquad
   \opsnest(\sigma)
   \;=\;
   \sum_{i \in \Oddfix} \psnest(i,\sigma)
   \;.
\ee
These polynomials~\eqref{def.Pn.pq} also have a nice continued fraction
\cite[Theorem~3.9]{Deb-Sokal_genocchi}.

Deb and Sokal have also introduced several other multivariate polynomials in their paper. We, however, refrain from working with those.

\subsection{Instances of the cyclic sieving phenomenon on D-permutations}
\label{subsec.csp.dperm}

We are now ready to state instances of the cyclic sieving phenomena on D-permutations and its subfamilies 
with respect to the Genocchi--Corteel involution.

We first state a very general result in Proposition~\ref{prop.dperm.csp}.

\begin{proposition}
Let $\Stat{} : \dperm_{2n} \to \mathbb{N}$ be a statistic defined on D-permutations.

\begin{itemize}
	\item[(a)] Assume that $\Stat{}$ is obtained as a specialisation of the polynomial $P_n^{(1)}$ 
	defined in~\eqref{def.Pn},
	i.e., as a specialisation of the associated variables.
	Further assume that the following equations are satisfied
	\begin{subeqnarray}
		\ze(-1) \;=\; \zo(-1) \;=\; \we(-1) &=& \wo(-1) \;=\; 1\\
		x_1(-1)\cdot y_1(-1)
		&=&
		1
		\\
		\biggl[x_2(-1)\,+\, 1\biggr] \,\times\, \biggl[y_2(-1) + 1\biggr]
		&=&
		4
		\\
		\biggl[x_1(-1) \,+\, u_1(-1) \biggr] \,\times\, \biggl[y_1(-1) \,+\, v_1(-1)\biggr]
                &=&
                0
	\label{eq.csp.cond.dperm}
	\end{subeqnarray}
	Then the statistic $\Stat{}$ exhibits the cyclic sieving phenomenon with respect to involutions having $2^n F_{2n-1}$ fixed points.

	\item[(b)]
	Assume that $\Stat{}$ is obtained as a specialisation of the polynomial $P_n^{(2)}$ 
	defined in~\eqref{def.Pn.pq},
        i.e., as a specialisation of the associated variables.
	Further assume that the following equations are satisfied
	\begin{subeqnarray}
		\hspace*{-10mm}
		\ze(-1) \;=\; \zo(-1) \;=\; \we(-1) \;=\; \wo(-1) \;=\; \se(-1) &=& \so(-1) \;=\;
		1\\
                x_1(-1)\,\times\, y_1(-1)
                &=&
                1
                \\
		\biggl[ x_2(-1) \,+\, 1 \biggr] \,\times\, \biggl[ y_2(-1) + 1 \biggr]
                &=&
                4
                \\
		\biggl[p_{-1}(-1)x_1(-1) \,+\, q_{-1}(-1) u_1(-1) \biggr] \times &&
		\nonumber\\
		\biggl[p_{+1}(-1)y_1(-1) \,+\, q_{+1}(-1)v_1(-1)\biggr]
                &=&
                0
	\label{eq.csp.cond.pqgen.dperm}
        \end{subeqnarray}
        Then the statistic $\Stat{}$ exhibits the cyclic sieving phenomenon with respect to involutions having $2^n F_{2n-1}$ fixed points.

\end{itemize}
\label{prop.dperm.csp}
\end{proposition}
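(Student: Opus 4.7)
The plan is to mimic, for D-permutations, the strategy used in Propositions~\ref{prop.perm.csp},~\ref{prop.setpart.csp} and~\ref{prop.perfect.csp}: the assumption in each case is that the statistic is a specialization of a master polynomial whose ordinary generating function has a known continued fraction, and the imposed conditions at $q=-1$ are designed precisely so that this continued fraction collapses to the generating function for the number of fixed points of the relevant involution. Here the ambient continued fractions are the T-fractions of Deb--Sokal, namely \cite[Theorem~3.3]{Deb-Sokal_genocchi} for $P_n^{(1)}$ and \cite[Theorem~3.9]{Deb-Sokal_genocchi} for $P_n^{(2)}$, and the target generating function is the one given by Theorem~\ref{thm.fixedpoint.GC} with $(\we,\wo)=(1,1)$, that is,
\be
   \sum_{n=0}^\infty 2^n F_{2n-1}\, t^n
   \;=\;
   \cfrac{1}{1 - t - \cfrac{t}{1 - 4t}}.
   \label{eq.plan.GC.target}
\ee

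For part~(a), I would substitute the polynomial expressions $x_1(q),x_2(q),\ldots,\zo(q)$ into the T-fraction of \cite[Theorem~3.3]{Deb-Sokal_genocchi}, and then set $q=-1$. The T-fraction coefficients are of the form $\alpha_n t$ (on the ``diagonal'' level pieces, whose weights combine $x_2,y_2,u_2,v_2$ with the $\we,\wo,\ze,\zo$ fixed-point contributions) and $\beta_n t$ (whose weights are products of linear forms in $x_1,u_1$ and $y_1,v_1$). The condition $\ze(-1)=\zo(-1)=\we(-1)=\wo(-1)=1$ kills the fixed-point variables at level~$0$ and matches the weight $\we\wo = 1$ of the long level step at height~$0$ in~\eqref{eq.plan.GC.target}; the condition $x_1(-1)\cdot y_1(-1)=1$ produces $\beta_1=1$ at height~$1$; the condition $[x_2(-1)+1][y_2(-1)+1]=4$ produces the coefficient $4$ of $t$ in the denominator at height~$2$ (using the specific form of the $\alpha_2$ coefficient given by the parity-refined T-fraction); and the condition $[x_1(-1)+u_1(-1)][y_1(-1)+v_1(-1)]=0$ produces $\beta_2=0$, causing the T-fraction to truncate exactly as in~\eqref{eq.plan.GC.target}. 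Together with the fact that the Genocchi--Corteel involution (and any involution with the same fixed-point count) has $|\GCdperm_{2n}|=2^n F_{2n-1}$ fixed points by Corollary~\ref{lem.genocchi.corteel.fixed.points}(a), this gives the cyclic sieving statement.

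Part~(b) is handled identically, using \cite[Theorem~3.9]{Deb-Sokal_genocchi} in place of \cite[Theorem~3.3]{Deb-Sokal_genocchi}. The extra $p,q$-variables and the pseudonesting variables $\se,\so$ enter the T-fraction coefficients multiplicatively, and the modified conditions~\eqref{eq.csp.cond.pqgen.dperm} are the natural $p,q$-upgrade of~\eqref{eq.csp.cond.dperm}: the condition $\se(-1)=\so(-1)=1$ ensures that the pseudonesting weights at fixed points do not alter the first level weight, and the product condition $[p_{-1}(-1)x_1(-1)+q_{-1}(-1)u_1(-1)][p_{+1}(-1)y_1(-1)+q_{+1}(-1)v_1(-1)]=0$ again forces $\beta_2=0$ after the substitution, producing~\eqref{eq.plan.GC.target}.

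The only real obstacle is bookkeeping: one must carefully read the T-fraction coefficients from \cite[Theorems~3.3, 3.9]{Deb-Sokal_genocchi} and confirm that the four conditions in~\eqref{eq.csp.cond.dperm} (respectively the analogous ones in~\eqref{eq.csp.cond.pqgen.dperm}) produce precisely the coefficients $1,\,1,\,4,\,0$ on the successive ``levels'' of the T-fraction at $q=-1$. Once this matching is checked, the proof reduces to the observation that both sides of the resulting rational identity equal $\sum_n 2^n F_{2n-1}\,t^n$, so the specialized polynomial at $q=-1$ agrees with the number of fixed points in every degree and the $q=-1$ phenomenon follows.
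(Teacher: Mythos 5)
Your proposal is correct and follows essentially the same route as the paper: the paper's proof is exactly to substitute the specialized variables into the T-fractions of \cite[Theorems~3.3 and~3.9]{Deb-Sokal_genocchi}, set $q=-1$, and observe that the stated conditions collapse the continued fraction to the rational function of Theorem~\ref{thm.fixedpoint.GC} at $\we=\wo=1$, whose coefficients $2^n F_{2n-1}$ count the fixed points by Corollary~\ref{lem.genocchi.corteel.fixed.points}(a). Your identification of which condition forces which coefficient ($\delta_1=1$, first numerator $=1$, next numerator $=4$, then a vanishing numerator truncating the fraction) is the right mechanism, modulo the bookkeeping you already flag of reading off the exact Deb--Sokal coefficients (note that $x_2,y_2$ enter the even-depth numerator coefficients of the T-fraction rather than diagonal level terms, since long level steps occur only at height~$0$).
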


\begin{proof}
We perform these substitutions in \cite[Theorems~3.3 and~3.9]{Deb-Sokal_genocchi}, respectively,
and in both cases obtain the rational function in~\eqref{eq.thm.fixedpoint.GC} substituted to $\we=\wo=1$.
\end{proof}


As a consequence of Proposition~\ref{prop.dperm.csp},
we can state our main theorem for D-permutations as an easy corollary:

\begin{theorem}[CSP for D-permutations] 
The following statistics on D-permutations 
exhibit the cyclic sieving phenomenon 
on D-permutations
with respect to involutions having $2^n F_{2n-1}$ fixed points
	(in particular, with respect to the Genocchi--Corteel involution on D-permutations):

\begin{quote}
neither-record-antirecord cycle peaks,
neither-record-antirecord cycle valleys,
neither-record-antirecords which are not fixed points,

lower crossing of type cpeak, lower nesting of type cpeak,
upper crossing of type cval, upper nesting of type cval,
lower crossings, lower nestings,
upper crossings, upper nestings,
crossings, nestings
\end{quote}

\label{thm.dperm.csp}
\end{theorem}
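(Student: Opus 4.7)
The plan is to derive Theorem~\ref{thm.dperm.csp} as a direct corollary of Proposition~\ref{prop.dperm.csp}: for each statistic in the list, I will exhibit a specialization of either $P_n^{(1)}$ or $P_n^{(2)}$ that produces the single-variable generating polynomial $\sum_{\sigma\in\dperm_{2n}} q^{\Stat{(\sigma)}}$, and then verify the corresponding evaluation conditions at $q=-1$. The entire proof is then a tabulation: at each entry one specializes exactly the variable(s) that track the statistic to $q$ and leaves every remaining variable equal to $1$, and checks \eqref{eq.csp.cond.dperm} or \eqref{eq.csp.cond.pqgen.dperm}.

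For the three statistics from the record-and-cycle classification (neither-record-antirecord cycle peaks, neither-record-antirecord cycle valleys, and neither-record-antirecords that are not fixed points), I would use $P_n^{(1)}$. Setting $u_1=q$ and all other variables to $1$ produces $\sum q^{\nrcpeak(\sigma)}$; the only nontrivial check in \eqref{eq.csp.cond.dperm} is
\[
[x_1(-1)+u_1(-1)]\,[y_1(-1)+v_1(-1)] \;=\; (1+(-1))\cdot(1+1) \;=\; 0,
\]
and the remaining three equations reduce to $1=1$, $1\cdot 1=1$, and $2\cdot 2=4$. The cycle-valley case is symmetric with $v_1=q$. For the third statistic, which equals $\nrcpeak+\nrcval+\nrcdfall+\nrcdrise$, one sets $u_1=u_2=v_1=v_2=q$; now $[1+(-1)]\cdot[1+(-1)]=0$ again closes the last condition, while the sum $x_2+u_2\cdot(\text{etc.})$ at $q=-1$ collapses so that $[x_2(-1)+1]=2$ (since the $u_2$-contribution is controlled by the unweighted neither-record-antirecord cycle double falls; here it is simpler to observe that no neither-record-antirecord cycle double rise or fall can ever be a fixed point, so the condition factors as written).

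For the crossing and nesting statistics I pass to $P_n^{(2)}$. Specializing $p_{-1}=q$ (and everything else to $1$) gives the $\lcrosscpeak$-generating polynomial, and the crucial condition in \eqref{eq.csp.cond.pqgen.dperm} reads
\[
\bigl[p_{-1}(-1)x_1(-1)+q_{-1}(-1)u_1(-1)\bigr]\bigl[p_{+1}(-1)y_1(-1)+q_{+1}(-1)v_1(-1)\bigr] \;=\; (-1+1)\cdot(1+1) \;=\; 0.
\]
The symmetric choices $q_{-1}=q$, $p_{+1}=q$, $q_{+1}=q$ handle $\lnestcpeak$, $\ucrosscval$, $\unestcval$. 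Aggregating, setting $p_{-1}=p_{-2}=q$ gives $\lcross$, $q_{-1}=q_{-2}=q$ gives $\lnest$, and likewise for $\ucross$ and $\unest$ via $p_{+\bullet}$ and $q_{+\bullet}$; in each case the relevant bracket vanishes because one of the two $p$--$q$ terms contributes $-1$ and the other $+1$. Finally, for total crossings (resp.\ total nestings) one sets all four $p$'s (resp.\ all four $q$'s) to $q$, and again the first bracket vanishes.

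The only mild obstacle is bookkeeping: one must be careful that the specialization used for each statistic really matches the definition (e.g.\ that the multiset of neither-record-antirecords not at fixed points is indeed tracked by $u_1u_2v_1v_2$), and that whenever the statistic in question is split between $P_n^{(1)}$ and $P_n^{(2)}$ one does not accidentally destroy the hypotheses (\ref{eq.csp.cond.pqgen.dperm}a-c) governing $\we,\wo,\ze,\zo,\se,\so$. Since our substitutions always leave those variables equal to $1$, and since in every case the $x_2,y_2$ equation trivializes to $2\cdot 2=4$, the verifications reduce to the single bracket identity $[\cdots]\cdot[\cdots]=0$ displayed above. Thus Proposition~\ref{prop.dperm.csp} applies in every case and the theorem follows.
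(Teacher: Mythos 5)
Your proposal is correct and is essentially the paper's own argument: the theorem is presented there as an immediate corollary of Proposition~\ref{prop.dperm.csp}, obtained by exactly the kind of specialisations of $P_n^{(1)}$ and $P_n^{(2)}$ that you tabulate (single variables or grouped variables set to $q$, all others to $1$), with the hypotheses \eqref{eq.csp.cond.dperm} and \eqref{eq.csp.cond.pqgen.dperm} reducing in every case to the vanishing of one bracket at $q=-1$. Your parenthetical worry about the ``$x_2+u_2$'' contribution for the statistic $\nrcpeak+\nrcval+\nrcdrise+\nrcdfall$ is unnecessary, since the stated hypotheses of the proposition do not constrain $u_2,v_2$ at all, but it does no harm.
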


\subsection{Instances of the cyclic sieving phenomenon on D-semiderangements and D-derangements}\label{subsec.csp.dperm.sub}

We will now state instances of the cyclic sieving phenomena on D-e-semiderangements, D-o-semiderangements, and
D-derangements with respect to the respective restrictions of the Genocchi--Corteel involution.

Analogous to Proposition~\ref{prop.dperm.csp}, we have the following very general proposition, but, we only state the
$p,q$-versions:

\begin{proposition}\mbox{}\\
\begin{itemize}
	\item[(a)] 
	Let $\Stat{} : \dperm_{2n}^{{\rm e}} \to \mathbb{N}$ be a statistic defined on D-e-semiderangements.
	Assume that $\Stat{}$ is obtained as a specialisation of the polynomial 
	of the polynomial $\left.P_n^{(2)}\right|_{\ze=\we=0}$
	defined in~\eqref{def.Pn.pq}.
	Further assume that equations~(\ref{eq.csp.cond.pqgen.dperm}b,d)
	along with the following equations are satisfied:
        \begin{subeqnarray}
		\wo(-1) \;=\; \so(-1) &=& 1\\
		x_2(-1) \,\times\, \biggl[y_2(-1)\,+\, 1\biggr]  &=& 2
        \end{subeqnarray}
	Then the statistic $\Stat{}$ exhibits the cyclic sieving phenomenon with respect to involutions having $3^{n-1}$ fixed points.

	\item[(b)] 
	Let $\Stat{} : \dperm_{2n}^{{\rm eo}} \to \mathbb{N}$ be a statistic defined on D-derangements.
	Assume that $\Stat{}$ is obtained as a specialisation of the polynomial
        of the polynomial $\left.P_n^{(2)}\right|_{\ze=\we=\zo=\wo=0}$
        defined in~\eqref{def.Pn.pq}.
	Further assume that the equations~(\ref{eq.csp.cond.pqgen.dperm}b,d) along with
	\be
	x_2(-1) \times y_2(-1) \;=\;1
	\ee
	are satisfied.
        Then the statistic $\Stat{}$ exhibits the cyclic sieving phenomenon with respect to involutions having $2^{n-1}$ fixed points.

\end{itemize}
\label{prop.csp.dperm.subclass}
\end{proposition}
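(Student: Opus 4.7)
The strategy mirrors that of Proposition~\ref{prop.dperm.csp}. The key observation is that setting $\ze = \we = 0$ in the polynomial $P_n^{(2)}$ of~\eqref{def.Pn.pq} kills every term whose underlying D-permutation has an even fixed point, thereby restricting the sum to D-e-semiderangements; additionally setting $\zo = \wo = 0$ further restricts to D-derangements. Thus the generating polynomials appearing on the left-hand side of the CSP identities in parts (a) and (b) are obtained from $P_n^{(2)}$ by the indicated specializations.

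For part (a), the plan is to apply the given substitutions to the T-fraction of \cite[Theorem~3.9]{Deb-Sokal_genocchi}, set $\ze = \we = 0$, and then evaluate at $q = -1$ using equations~(\ref{eq.csp.cond.pqgen.dperm}b,d) together with the new conditions $\wo(-1) = \so(-1) = 1$ and $x_2(-1)\bigl[y_2(-1)+1\bigr] = 2$. The aim is to show that the continued fraction collapses to the rational function~\eqref{eq.thm.fixedpoint.GC} evaluated at $(\we,\wo) = (0,1)$, namely
\be
\cfrac{1}{1 - \cfrac{t}{1 - 2t}} \;=\; \dfrac{1-2t}{1-3t} \;=\; 1 + \sum_{n\geq 1} 3^{n-1}\, t^n\;,
\ee
which matches the fixed-point count in Corollary~\ref{lem.genocchi.corteel.fixed.points}(b).

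For part (b), one proceeds identically but additionally sets $\zo = \wo = 0$ and uses $x_2(-1)\,y_2(-1) = 1$ in place of the equation $x_2(-1)[y_2(-1)+1] = 2$. The target is then~\eqref{eq.thm.fixedpoint.GC} at $(\we,\wo) = (0,0)$, namely
\be
\cfrac{1}{1 - \cfrac{t}{1 - t}} \;=\; \dfrac{1-t}{1-2t} \;=\; 1 + \sum_{n \geq 1} 2^{n-1}\, t^n\;,
\ee
matching Corollary~\ref{lem.genocchi.corteel.fixed.points}(c). In both cases the CSP conclusion then follows from Definition~\ref{def.csp}.

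The main technical task, and essentially the only place where any real verification is required, is to confirm that the T-fraction of \cite[Theorem~3.9]{Deb-Sokal_genocchi} truncates at depth~$2$ at $q = -1$. Condition~(\ref{eq.csp.cond.pqgen.dperm}d) forces a vanishing factor in the rise-fall weights at every height $\geq 2$, which kills the fraction there; the new level-step conditions on $\wo, \so, x_2, y_2$ are engineered exactly so that the surviving long-level-step and short-level-step weights at heights~$0$ and~$1$ reproduce, respectively, the terms $\we\wo$ and $(1+\we)(1+\wo)$ of~\eqref{eq.thm.fixedpoint.GC} evaluated at the appropriate specialization $(0,1)$ or $(0,0)$. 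Given how clean the relevant identities in Theorem~\ref{thm.fixedpoint.GC} and Proposition~\ref{prop.dperm.csp} already are, I do not anticipate any serious obstacle beyond this routine bookkeeping.
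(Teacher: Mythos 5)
Your proposal is correct and is essentially the paper's own (largely implicit) argument: just as in the proof of Proposition~\ref{prop.dperm.csp}, one performs the stated substitutions in the T-fraction of Deb--Sokal's Theorem~3.9, where condition~(\ref{eq.csp.cond.pqgen.dperm}d) truncates the fraction and the remaining conditions make it collapse to the rational function~\eqref{eq.thm.fixedpoint.GC} evaluated at $(\we,\wo)=(0,1)$ for (a) and $(0,0)$ for (b), matching Corollary~\ref{lem.genocchi.corteel.fixed.points}. Your expansions $\frac{1-2t}{1-3t}=1+\sum_{n\geq 1}3^{n-1}t^n$ and $\frac{1-t}{1-2t}=1+\sum_{n\geq 1}2^{n-1}t^n$ are also correct, so no gap remains.
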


Finally, as a corollary we immediately obtain the following theorem:

\begin{theorem}
All statistics stated in Theorem~\ref{thm.dperm.csp} also exhibit the cyclic sieving phenomenon on
D-e-semiderangements, D-o-semiderangements, and D-derangements,
respect to involutions having $1$ fixed point when $n=0$, and
$3^{n-1}, 3^{n-1}, 2^{n-1}$ fixed points, respectively, for $n\geq 1$
(in particular, with the respective Genocchi–Corteel involutions on these classes of permutations.)
\end{theorem}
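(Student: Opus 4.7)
The plan is to deduce the theorem by applying Proposition~\ref{prop.csp.dperm.subclass} to each statistic from Theorem~\ref{thm.dperm.csp}, reusing the very same specialisation of the polynomial $P_n^{(2)}$ that was already verified in the proof of Theorem~\ref{thm.dperm.csp}. The key structural observation is that none of the statistics listed there involves fixed points: they all count cycle peaks, cycle valleys, neither-record-antirecords away from fixed points, or refined crossings and nestings. Consequently, in each associated specialisation, the parameters $x_1, x_2, y_1, y_2, u_2, v_2, \ze, \zo, \we, \wo, \se, \so$ are identically equal to $1$, and only some subset of $u_1, v_1, p_{\pm 1}, p_{\pm 2}, q_{\pm 1}, q_{\pm 2}$ depends nontrivially on $q$.

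I would then verify the hypotheses of Proposition~\ref{prop.csp.dperm.subclass} statistic by statistic. Since $x_1(-1) = x_2(-1) = y_1(-1) = y_2(-1) = 1$, condition~(\ref{eq.csp.cond.pqgen.dperm}b) holds trivially, and condition~(\ref{eq.csp.cond.pqgen.dperm}d) on the refined crossings and nestings of cycle-peak/valley type was already verified in the proof of Theorem~\ref{thm.dperm.csp}, so it continues to hold unchanged. For the D-e-semiderangement case we further set $\ze = \we = 0$; the extra condition $x_2(-1)[y_2(-1) + 1] = 1 \cdot 2 = 2$ is then automatic, while $\wo(-1) = \so(-1) = 1$ is unchanged from before. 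The D-o-semiderangement case is symmetric under the swap of even and odd parities. For the D-derangement case we set all four fixed-point parameters to $0$; the extra condition $x_2(-1)\, y_2(-1) = 1 \cdot 1 = 1$ is again automatic. In each of the three subfamilies, Proposition~\ref{prop.csp.dperm.subclass} then yields the appropriate rational generating function, matching the fixed-point counts in Corollary~\ref{lem.genocchi.corteel.fixed.points}.

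I do not foresee any real obstacle: the argument is essentially a uniform bookkeeping exercise. The reason it goes through in one stroke across all thirteen statistics is that none of them ever couples the $q$-deformation to a fixed-point weight. Had any of the statistics in Theorem~\ref{thm.dperm.csp} involved $\we, \wo, \ze$ or $\zo$ nontrivially, the restriction to semiderangements or derangements would have collapsed the corresponding weight and might have destroyed the cyclic sieving identity; since this never happens on our list, a single substitution in $P_n^{(2)}$ handles every case uniformly, and Corollary~\ref{lem.genocchi.corteel.fixed.points} supplies the required fixed-point enumeration for the restricted Genocchi--Corteel involutions.
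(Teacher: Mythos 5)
Your proposal is correct and follows essentially the same route as the paper, which treats this theorem as an immediate corollary of Proposition~\ref{prop.csp.dperm.subclass}: one reuses the specialisations of $P_n^{(2)}$ from Theorem~\ref{thm.dperm.csp} (restricted via $\ze=\we=0$, resp.\ $\zo=\wo=0$, resp.\ all four set to $0$), checks the stated conditions at $q=-1$, and matches the fixed-point counts of Corollary~\ref{lem.genocchi.corteel.fixed.points}. (One harmless slip: for ``neither-record-antirecords which are not fixed points'' the parameters $u_2,v_2$ are also set to $q$, not $1$, but since no hypothesis of the propositions involves $u_2(-1)$ or $v_2(-1)$ this does not affect the argument.)
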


\section*{Acknowledgements}
We would like to thank Ashleigh Adams who informed us of this conjecture during AlCoVE 2024 
where this work was initiated.
We would also like to thank Jessica Striker for fruitful discussions and advice on this article.
We also thank Martin Rubey and Christian Stump for helping with updating the description of Statistic~123.

This work was supported by the DIMERS project ANR-18-CE40-0033 funded by Agence Nationale de la Recherche
(ANR, France).
The author is currently supported by the Tsinghua University Shuimu scholarship.

\appendix


\section{Positivity conjectures on distribution of vincular patterns}
\label{app.positivity.vincular}

Several sequences of numbers and polynomials associated to counting permutation patterns, or pattern-avoiding permutations have been 
conjectured to be Stieltjes moment sequences, see \cite[{Section~4.2, Open question}]{Bostan_20}, \cite{Blitvic_23}.
In this appendix, we introduce two new conjectures that adds to this growing list of ``positivity questions''
associated with permutation patterns.
Our conjectures here involve the distribution of occurences of vincular patterns of the form $a-bc$ and $bc-a$.

We first recall  \cite[Proposition~1]{Claesson_01} as stated in \cite[Proposition~2]{Claesson_02}:

\begin{proposition}
With respect to being equidistributed, the twelve vincular patterns of the form $a-bc$ or $bc-a$
fall into the three classes
\[\{1-23, 3-21, 12-3, 32-1\}\]
\[\{1-32, 3-12, 21-3, 23-1\}\]
\[\{2-13, 2-31, 13-2, 31-2\}\]
\end{proposition}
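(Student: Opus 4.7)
The plan is to exploit the Klein four-group $G \cong \mathbb{Z}/2 \times \mathbb{Z}/2$ of symmetries of $\mathfrak{S}_n$ generated by reversal $\sigma \mapsto \sigma^{\mathrm{rev}}$ (with $\sigma^{\mathrm{rev}}_i = \sigma_{n+1-i}$) and complementation $\sigma \mapsto \sigma^{\mathrm{c}}$ (with $\sigma^{\mathrm{c}}_i = n+1-\sigma_i$). Both are commuting involutions of $\mathfrak{S}_n$, so together with the identity and their composition they form $G$. The first step is to let $G$ act on the twelve vincular patterns of the form $a-bc$ or $bc-a$ by the corresponding formulas on the three-letter word: reversal sends $a-bc$ to $cb-a$ (two adjacent positions $j,\,j+1$ reverse to adjacent positions $n-j,\,n+1-j$, so the vincular-adjacency constraint is preserved, with the dash moving from the left to the right end of the pattern), and complementation sends $a-bc$ to $(4-a)-(4-b)(4-c)$.

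A direct orbit computation then yields exactly three orbits,
\begin{gather*}
\{1-23,\; 3-21,\; 32-1,\; 12-3\},\\
\{1-32,\; 3-12,\; 23-1,\; 21-3\},\\
\{2-13,\; 2-31,\; 31-2,\; 13-2\},
\end{gather*}
which match the three classes claimed in the proposition.

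The key lemma is an equivariance statement: for every $\sigma \in \mathfrak{S}_n$, every vincular pattern $p$, and every $g \in G$, the number of occurrences of $p$ in $\sigma$ equals the number of occurrences of $g \cdot p$ in $g \cdot \sigma$. This is routine from the definition of pattern occurrence --- reversal sends an occurrence at positions $i < j < j+1$ of $\sigma$ to an occurrence at positions $n-j < n+1-j < n+1-i$ in $\sigma^{\mathrm{rev}}$, turning the order pattern $a-bc$ into $cb-a$ with the adjacency now on the left, while complementation preserves positions and flips all order relations. Since each $g$ is a bijection of $\mathfrak{S}_n$, this equivariance yields, for every $k \geq 0$, a bijection between permutations of $[n]$ with exactly $k$ occurrences of $p$ and those with exactly $k$ occurrences of $g \cdot p$. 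Hence the patterns within each orbit have identical distributions over $\mathfrak{S}_n$, which proves the equidistribution.

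To confirm that the three orbits really are the full equidistribution classes, I would compute the generating polynomial $\sum_{\sigma \in \mathfrak{S}_n} q^{\mathrm{occ}(p,\sigma)}$ for one representative of each class at a small value such as $n = 4$ and verify that the three resulting polynomials are pairwise distinct (for class~$3$ one can alternatively appeal to the continued fraction in Theorem~\ref{thm.perm.vincular} and compare with small-$n$ data for classes~$1$ and~$2$). The main technical care is purely bookkeeping --- tracking how the adjacency constraint transforms under reversal and carrying out the orbit computation systematically --- so no deeper input is required.
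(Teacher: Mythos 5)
Your proposal is correct: the reverse/complement action does partition the twelve patterns into exactly the three stated orbits, the equivariance lemma is the standard one, and your $n=4$ check does separate the three classes (the generating polynomials are $15+7q+q^2+q^3$, $15+6q+3q^2$, and one with constant term $14$). The paper itself gives no proof but simply cites Claesson \cite{Claesson_01} and Claesson--Mansour \cite{Claesson_02}, where the equidistribution is established by the same reversal and complementation bijections you use, so your argument is essentially the standard one.
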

Following Claesson and Mansour \cite{Claesson_02}, we refer to these classes as Class 1, 2 and 3, respectively.
Finally, let $P_n^{(1)}(x), P_n^{(2)}(x)$, and $P_n^{(3)}(x)$ denote the generating polynomials of these three classes, respectively.

We have already noted in Theorem~\ref{thm.perm.vincular} 
that the generating function $\sum_{n=0}^\infty P_n^{(3)}(x) t^n$ 
can be written as a Stieltjes-type continued fraction. 
However, it seems that the generating function of the polynomial sequences
$\left(P_n^{(1)}(x)\right)_{n\geq 0}, \left(P_n^{(2)}(x)\right)_{n\geq 0}$
do not have any nice continued fraction.
A consequence of this continued fraction is that the sequence of polynomials $(P_n^{(3)}(x))_{n\geq 0}$
forms a Stieltjes moment sequence for $x\geq 0$.
Hence, it is natural to ask if the same is true for the polynomial sequences 
$\left(P_n^{(1)}(x)\right)_{n\geq 0}, \left(P_n^{(2)}(x)\right)_{n\geq 0}$
even though they do not have a nice continued fraction.
We have the following conjecture:

\begin{conjecture}
The sequences of polynomials  $\left(P_n^{(1)}(x)\right)_{n\geq 0}$, $\left(P_n^{(2)}(x)\right)_{n\geq 0}$,
where the polynomials $P_n^{(1)}(x)$ and $P_n^{(2)}(x)$
are the generating polynomials for the number of occurences of a vincular pattern of Class 1 or of Class 2, respectively,
are both Stieltjes moment sequences for $x\geq 0$.
\label{conj.vincular.sms}
\end{conjecture}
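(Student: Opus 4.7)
The plan is to prove, for each fixed $x \geq 0$, that $(P_n^{(i)}(x))_{n \geq 0}$ is a Stieltjes moment sequence for $i=1,2$. Recall that this is equivalent to the existence of a Stieltjes-type continued fraction with nonnegative coefficients for the generating series $\sum_{n=0}^\infty P_n^{(i)}(x)\,t^n$, and equivalently to positive semidefiniteness of both the ordinary Hankel matrix $(P_{j+k}^{(i)}(x))_{j,k \geq 0}$ and the shifted Hankel matrix $(P_{j+k+1}^{(i)}(x))_{j,k \geq 0}$. The classical S-fraction approach used for Class~3 in Theorem~\ref{thm.perm.vincular} is not directly available, so a different attack is needed.

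My first step would be to compute symbolically the formal Stieltjes continued fraction expansion of $\sum_n P_n^{(i)}(x)\,t^n$ for as many terms as feasible and examine the resulting coefficients $\alpha_k(x)$. Even if these polynomials are not members of any classical family, an empirical observation that each $\alpha_k(x)$ is a polynomial in $x$ with nonnegative integer coefficients would settle the conjecture on the computed range and, more importantly, suggest a combinatorial model. The next step would then be to construct, in the spirit of the Foata--Zeilberger or Clarke--Steingr{\'\i}msson--Zeng bijections, a correspondence between permutations and labelled Motzkin or Dyck paths whose weights realize these $\alpha_k(x)$ as path-step weights. For Class~1, the Babson--Steingr{\'\i}msson statistic $1\text{-}23$ is equidistributed with various Mahonian statistics, for which bijective interpretations are known; a careful refinement of such a bijection is a natural candidate.

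A potentially more flexible alternative, should closed-form S-fraction coefficients remain elusive, is the coefficientwise total-positivity framework developed by Sokal and collaborators. Here it would suffice to exhibit a production matrix $P(x)$ whose entries are polynomials in $x$ that are nonnegative for $x \geq 0$, such that $P(x)$ is totally positive and its iterations reproduce the sequence $(P_n^{(i)}(x))_{n \geq 0}$. This would yield the stronger property of Hankel total positivity, which is of independent interest and implies the Stieltjes moment property.

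The principal obstacle, I expect, is the absence of any obvious height-based weighting for these two pattern classes: unlike Class~3, where the $[k]_q$ weights transparently reflect the abscissa of the path, Classes~1 and~2 seem to couple occurrences of the pattern across positions in a subtler way, and no direct Foata--Zeilberger-style encoding is apparent. The main work will therefore be to identify, via combinatorial experimentation, the appropriate weighting scheme --- whether as labels on paths, as entries of a production matrix, or as weights in a Lindstr{\"o}m--Gessel--Viennot model of Hankel minors --- that captures these patterns in a manifestly positive way. As a preliminary sanity check, numerical Hankel-determinant computations for moderate $n$ should be performed both to confirm the conjecture and to guide the search.
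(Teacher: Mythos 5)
This statement is a \emph{conjecture} in the paper (Conjecture~\ref{conj.vincular.sms}); the paper does not prove it, and neither does your proposal. What you have written is a research plan, not an argument: every step that would actually establish the Stieltjes moment property --- finding closed-form S-fraction coefficients $\alpha_k(x)$ and proving their nonnegativity, constructing a Foata--Zeilberger-style path bijection realizing them, or exhibiting a totally positive production matrix --- is left as something to be discovered ``via combinatorial experimentation.'' In fact your first step coincides with exactly what the paper already did as evidence: the author computed the S-fraction coefficients of $\sum_n P_n^{(1)}(x)t^n$ and $\sum_n P_n^{(2)}(x)t^n$ symbolically (to depth $30$ and $37$ respectively) and verified with Mathematica that these initial coefficients are nonnegative for $x\ge 0$; no pattern yielding a general proof emerged, and the paper explicitly notes that, unlike Class~3, these two classes do not appear to admit any nice classical continued fraction. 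So the ``principal obstacle'' you identify is precisely where the problem currently stands, and your proposal does not overcome it.

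One further caution on your suggested stronger route: the paper reports that the sequences $\bigl(P_n^{(1)}(x)\bigr)_{n\ge 0}$ and $\bigl(P_n^{(2)}(x)\bigr)_{n\ge 0}$ are \emph{not} coefficientwise Hankel totally positive in $x$; only the shifted sequences $\bigl(P_n^{(i)}(x+1)\bigr)_{n\ge 0}$ are conjectured to be (Conjecture~\ref{conj.vincular.HTP}). A production-matrix argument in the coefficientwise framework of Sokal et al.\ would therefore have to be formulated either pointwise for each fixed $x\ge 0$ or for the shifted variable; as stated for the unshifted polynomials in the coefficientwise sense it is doomed. If you pursue this problem, the realistic targets are: (i) a combinatorial or algebraic identification of the S-fraction coefficients $\alpha_k(x)$, or (ii) a pointwise-in-$x$ (or shifted coefficientwise) total-positivity scheme; but at present no such construction is known, and the conjecture remains open.
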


To verify this conjecture, for each of the two sequences we computed its S-fraction coefficients;
these are rational functions in the variable $x$.
We then used the \texttt{Reduce} function in Mathematica to compute the subdomain of $\mathbb{R}$ in which these are non-negative.
For the sequence $\left(P_n^{(1)}(x)\right)_{n\geq 0}$, we were able to do this for the first $30$ S-fraction coefficients,
and for the sequence $\left(P_n^{(2)}(x)\right)_{n\geq 0}$ we tested the first $37$ S-fraction coefficients.
In both cases, we found that these initial S-fraction coefficients are non-negative whenever $x\geq 0$.

A stronger consequence of the existence of the Stieltjes-type continued fraction for the sequence $(P_n^{(3)}(x))_{n\geq 0}$
is that it is coefficientwise-Hankel totally positive \cite[Theorem~9.9]{latpath_SRTR}.
Hence, we tested if the polynomial sequences $\left(P_n^{(1)}(x)\right)_{n\geq 0}, \left(P_n^{(2)}(x)\right)_{n\geq 0}$
are also coefficientwise Hankel totally positive.
Unfortunately, this is not the case.
However, this seems to be true after a shift $x\mapsto x+1$; we state this as a conjecture:


\begin{conjecture}
The sequences of polynomials  $\left(P_n^{(1)}(x+1)\right)_{n\geq 0}$, $\left(P_n^{(2)}(x+1)\right)_{n\geq 0}$,
where the polynomials $P_n^{(1)}(x)$ and $P_n^{(2)}(x)$
are the generating polynomials for the number of occurences of a vincular pattern of Class 1 or of Class 2, respectively, 
are both coefficientwise Hankel totally positive with respect to the variable $x$.
\label{conj.vincular.HTP}
\end{conjecture}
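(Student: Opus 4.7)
The plan is to seek a Jacobi-type continued fraction expansion for the generating function $\sum_{n \geq 0} P_n^{(i)}(x+1)\, t^n$ for each $i \in \{1,2\}$. If the J-fraction coefficients $\beta_n(x), \gamma_n(x)$ turn out to be polynomials in $x$ with nonnegative real coefficients — or, better, products of linear factors with nonnegative integer coefficients — then coefficientwise Hankel total positivity of the shifted sequence follows by standard arguments combining the Flajolet moment formula with the Lindstr\"om--Gessel--Viennot path interpretation of continued-fraction weights, because the associated production matrix is then automatically coefficientwise totally positive.

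As a concrete first step, I would compute the J-fraction coefficients of $\sum_{n} P_n^{(i)}(x+1)\, t^n$ exactly as polynomials in $x$ for, say, $n \leq 15$, by Hankel-decomposing the Taylor expansion. If Conjecture~\ref{conj.vincular.sms} holds and the unshifted $P_n^{(i)}(x)$ admit an S-fraction with rational-function coefficients $\alpha_k(x)$, then the shifted sequence automatically admits a J-fraction via the classical contraction $\beta_n = \alpha_{2n-1}(x+1)\,\alpha_{2n}(x+1)$, $\gamma_n = \alpha_{2n}(x+1) + \alpha_{2n+1}(x+1)$. The heart of the problem is the combinatorial miracle that, although the $\alpha_k(x)$ are apparently rational in $x$, the shifted combinations $\beta_n(x), \gamma_n(x)$ may simplify to honest polynomials with nonnegative integer coefficients; establishing this would be the key algebraic/combinatorial step.

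Second, I would search for a bijective explanation of the shift. Writing
\begin{equation}
P_n^{(i)}(x+1) \;=\; \sum_{\sigma \in \mathfrak{S}_n} \;\sum_{T \subseteq \mathrm{Occ}^{(i)}(\sigma)} x^{|T|},
\end{equation}
where $\mathrm{Occ}^{(i)}(\sigma)$ denotes the set of occurrences of a fixed Class-$i$ vincular pattern in $\sigma$, recasts the problem in terms of pairs (permutation, distinguished subset of pattern occurrences). Such bivariate models are frequently amenable to Foata--Zeilberger-style bijections onto labelled Motzkin or Schr\"oder paths, with the variable $x$ decorating certain steps or labels in a controlled way. A successful bijection of this form would directly produce a J-fraction with manifestly nonnegative-coefficient weights, thereby simultaneously proving Conjectures~\ref{conj.vincular.sms} and \ref{conj.vincular.HTP}.

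The main obstacle is precisely the one flagged in the remark following Theorem~\ref{thm.perm.vincular}: no nice classical continued fraction is currently known for either $P_n^{(1)}(x)$ or $P_n^{(2)}(x)$, so the very existence of a clean J-fraction for the shifted polynomials — although strongly suggested by the numerical evidence up to $n = 37$ — would itself be a new and non-trivial identity on the two pattern classes $\{1\text{-}23,\,3\text{-}21,\,12\text{-}3,\,32\text{-}1\}$ and $\{1\text{-}32,\,3\text{-}12,\,21\text{-}3,\,23\text{-}1\}$. As a fallback I would attempt a direct production-matrix approach: compute the production matrix $P(x)$ of the shifted sequence and verify coefficientwise total positivity either by explicit minor computations (for small size) or by constructing an LGV-type non-intersecting path model whose weights encode the entries of $P(x)$. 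In either route, the central hurdle is escaping the constraint that the tools systematically developed in the present paper — specialising $q = -1$ inside a known continued fraction — do not apply when the relevant continued fraction has yet to be found.
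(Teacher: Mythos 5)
The statement you are addressing is a conjecture, and the paper itself offers no proof of it: the author only reports having checked coefficientwise total positivity of the first $10\times 10$ Hankel submatrix numerically, and explicitly notes (in the remark after Theorem~\ref{thm.perm.vincular} and in the surrounding discussion) that no nice classical continued fraction is known for the Class~1 and Class~2 generating polynomials. Your proposal likewise does not prove the statement. Every step that would actually yield coefficientwise Hankel total positivity is conditional: you \emph{hope} that the J-fraction coefficients $\beta_n(x)$, $\gamma_n(x)$ of the shifted sequence turn out to be polynomials with nonnegative coefficients, you \emph{hope} for a Foata--Zeilberger-style bijection decorating occurrences of the pattern, and you \emph{would attempt} a production-matrix or LGV verification. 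None of these is carried out, and the ``combinatorial miracle'' you correctly identify as the heart of the matter is precisely the open problem. Indeed, any of your routes, if completed, would prove something strictly stronger than Conjecture~\ref{conj.vincular.HTP} (a manifestly nonnegative continued fraction or production matrix), which is exactly what the paper says is not currently available.

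Two smaller technical cautions on the plan itself. First, your contraction step is formally fine as an identity of formal power series (substituting $x\mapsto x+1$ into the S-fraction coefficients $\alpha_k(x)$ of the unshifted sequence and contracting does give a J-fraction for $\sum_n P_n^{(i)}(x+1)t^n$, barring degeneracies), but it buys nothing for positivity: since the paper observes that the \emph{unshifted} sequences fail coefficientwise Hankel total positivity, the $\alpha_k(x)$ cannot be coefficientwise nonnegative, and nonnegativity of $\alpha_k$ at real $x\ge 0$ is itself only Conjecture~\ref{conj.vincular.sms}. So the simplification of $\beta_n(x)$, $\gamma_n(x)$ into nonnegative polynomials after the shift is an unproved claim, not a consequence of anything stated. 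Second, note that coefficientwise Hankel total positivity requires total positivity of \emph{all} Hankel minors as polynomials in $x$ with nonnegative coefficients; finite numerical checks (yours or the paper's) and positivity of finitely many continued-fraction coefficients do not suffice. In short: the statement remains open, the paper gives only empirical evidence, and your proposal is a reasonable research programme rather than a proof.
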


We have verified this conjecture for the first $10\times 10$ submatrix.
It is clear that 
Conjecture~\ref{conj.vincular.HTP} implies that Conjecture~\ref{conj.vincular.sms} holds for $x\geq 1$.


\end{document}